\crefname{hypothesis}{Hypothesis}{Hypotheses}
\Crefname{ALC@unique}{Line}{Lines}
\colorlet{texcscolor}{blue!50!black}
\colorlet{texemcolor}{red!70!black}
\colorlet{texpreamble}{red!70!black}
\colorlet{codebackground}{black!25!white!25}
\lstdefinestyle{siamlatex}{%
  style=tcblatex,
  texcsstyle=*\color{texcscolor},
  texcsstyle=[2]\color{texemcolor},
  keywordstyle=[2]\color{texemcolor},
  moretexcs={cref,Cref,maketitle,mathcal,text,headers,email,url},
}
\DeclareTotalTCBox{\code}{ v O{} }
{ %fontupper=\ttfamily\color{texemcolor},
  fontupper=\ttfamily\color{black},
  nobeforeafter,
  tcbox raise base,
  colback=codebackground,colframe=white,
  top=0pt,bottom=0pt,left=0mm,right=0mm,
  leftrule=0pt,rightrule=0pt,toprule=0mm,bottomrule=0mm,
  boxsep=0.5mm,
  #2}{#1}
\patchcmd\newpage{\vfil}{}{}{}
\title{From Additive Average Schwarz Methods to \\ Non-overlapping Spectral Additive Schwarz Methods}
\author{YI YU\thanks{Mathematical Sciences, Worcester Polytechnic Institute, MA, USA(\email{yyu5@wpi.edu}).}
\and Maksymilian Dryja\thanks{Warsaw University, Warsaw, Poland (\email{dryja@mimuw.edu.pl}).}
\and Marcus Sarkis\footnotemark[3]\thanks{Mathematical Sciences, Worcester Polytechnic Institute, MA, USA(\email{msarkis@wpi.edu}).}}
\begin{document}

\maketitle

%% ------------------------------------------------------------------
%% ABSTRACT
%% ------------------------------------------------------------------
\begin{tcbverbatimwrite}{tmp_\jobname_abstract.tex}
\begin{abstract}
 In this paper, we design and analyze two new methods based on additive average Schwarz -- AAS method introduced in \cite{MR1943457}. The new methods design for elliptic problems with highly heterogeneous coefficients. The methods are of the non-overlapping type, and the subdomain interactions obtain via the coarse space. The first method is the minimum energy Schwarz -- MES method. MES has the minimum energy for the coarse space with constant extension inside each subdomain. The condition number of the MES method is always smaller than in the AAS method. The second class of methods is the non-overlapping spectral additive Schwarz -- NOSAS methods based on low-rank discrete energy harmonic extension in each subdomain. To achieve the low-rank, we solve a generalized eigenvalue problem in each subdomain. NOSAS have the minimum energy for a given rank of the coarse space. The condition number of the NOSAS methods does not depend on the coefficients. Additionally, the NOSAS methods have good parallelization properties. The size of the global problem is equal to the total number of eigenvalues chosen in each subdomain. It is only related to the number of high-permeable islands that touch the subdomains' interface.  
\end{abstract}

\begin{keywords}
  Additive Schwarz Methods, Adaptive Coarse Spaces, Heterogeneous coefficients
\end{keywords}

\begin{AMS}
  	65N30, 	65N55
\end{AMS}
\end{tcbverbatimwrite}
\input{tmp_\jobname_abstract.tex}
%% ------------------------------------------------------------------
%% END HEADER
%% ------------------------------------------------------------------

\section{INTRODUCTION}\hspace*{\fill} 

Let $\Omega \subset \mathbb{R}^{d=2 (3)}$ be a bounded polygonal (polyhedra) domain and let us impose homogeneous Dirichlet data on $\partial \Omega$.  Let us
introduce the Sobolev space:
\[
H_0^1(\Omega):=\{v \in H^1(\Omega);v=0 \text{ on } \partial \Omega\}.\\ 
\]
  The continuous variational formulation is given by: find $u \in H_0^1(\Omega)$ such that 
\begin{equation}\label{elliptic}
a(u,v)=f(v) \hspace{20pt} \mbox{for all}\hspace{20pt} v\in H_0^1(\Omega),
\end{equation}
where 
\begin{equation*}
     a(u,v):=\int_{\Omega} \rho(x)\nabla u\cdot\nabla vdx, \hspace{30pt} f(v):=\int_\Omega fv dx, 
\end{equation*}
and we assume $\rho(x) \ge \rho_{\min} > 0$ almost everywhere in $\Omega.$

The main purpose of this paper is to invent and analyze a class of domain decomposition preconditioners, denoted by non-overlapping spectral additive Schwarz -- NOSAS methods, for a finite element discretization for the problem \cref{elliptic}. To put these new algorithms into perspective, we first summarize some of the state-of-the-art domain decomposition preconditioners and show some of the differences with NOSAS methods. 

Any domain decomposition method base on the assumption that the given computation domain, say $\Omega$, is partitioned into subdomains $\Omega_i, 1\leq i\leq N$ which may or may not overlap, see \cite{MR2104179}. For the overlapping cases, in each iteration
of an iterative scheme, such as in the preconditioned conjugate gradient method, the idea is to solve local problems in $\Omega_i$ in parallel and combine them to construct a global approximation. For instance, if we add these local solutions, this method is called the additive Schwarz method. For the non-overlapping cases, the interface between these subdomains, say $\Gamma$, plays
a fundamental role; the idea is that if the solution of the discrete problem on the interface is known, the solution in the interior of the subdomains can be obtained easily and in parallel by solving a boundary value local problems. Hence, for the non-overlapping cases,
the algorithms target finding the solution on the interface. A way to do
that is via static condensation (via Schur complements where the interior unknowns to the subdomains are eliminated from the system). Overlapping Schwarz methods require some redundancy on the computations (larger overlap) to obtain fast convergence, while for the non-overlapping Schwarz methods, the Schur complement's application cannot be approximated. Some methods combine both strategies, such as the AAS method introduced in \cite{MR1943457,bjorstad2000domain} and analyzed in \cite{MR2770288} for a class of discontinuous coefficients.

 A coarse space is necessary for scalability with respect to the number of subdomains and the nature of the coefficients $\rho(x)$. The study of coarse spaces that make a preconditioner scalable with respect to the number of subdomains has a long history, see \cite{MR2104179}, where the main idea is to have a local Poincar\'e inequality on the space orthogonal to the coarse space. Only recently, coarse spaces were introduced to guarantee the robustness of the preconditioners for any coefficients $\rho(x)$, where the idea now is to have local weighted Poincar\'e inequality where the weights are related to the coefficients $\rho$ (which might be highly
heterogeneous). These coarse spaces nowadays are sometimes referred to as adaptive coarse spaces. The first non-empirical adaptive preconditioners were introduced to the non-overlapping case in \cite{MR2334131,MR2277024} to the BDDC and FETI-DP methods where mathematically motivated by the generalized eigenvalue problems based on energy operators. This work was later revisited in \cite{DP2013,MR3678572}  using
parallel and series sums techniques found in the classical article \cite{anderson1969series} and has become an important reference for the subject. For the overlapping cases, the first work was introduced in \cite{MR2718268,MR2728702} which also based on the generalized eigenvalue problems on the overlapping subdomains; this work also introduces the concept that the number of eigenvalues needed
to attain robustness is associated with the number of channels of
high conductivity crossing the subdomains boundaries. An incomplete list of references for more work for non-overlapping and overlapping cases can be seen in \cite{MR3612901,MR3582898,MR3350765,MR3350292,MR3303686,MR3739213,MR3089678} and \cite{MR3033238,MR3812062,MR4014789,MR3175183}, respectively, and references therein. 

The NOSAS are non-overlapping methods (with no redundancy on the computation) and without the need for Schur complement operators. A major difference between NOSAS and FETI-DP/BDDC is: NOSAS is based only on subdomains. For each subdomain $\Omega_i$ just one generalized eigenvalue problem is solved and using only the Neumann matrix associated to $\Omega_i$, while for BDDC/FETI-DP in 3D, the eigenvalues problems base on problems associated to edges and faces of the boundary of $\Omega_i$, that is, for each of this edge (face) of $\Omega_i$, all the Neumann matrices of the subdomains that share that edge (face) are needed. The methods that are closest
to our methods are the BDD-GenEO \cite{MR3093793} since they also are in solving one generalized eigenvalue problem per subdomain. However, the local matrices involved in this generalized eigenvalue problem require information from neighbors subdomains; the detailed differences of NOSAS and BDD-GenEO describe in \Cref{BDD-GenE0}. Another major difference is that NOSAS does not require a Schur complement operator while BDD-GeoEO/BDDC/FETI-DP does. Finally, we would like to mention the work \cite{MR3795491} which also uses an extension of the AAS method. Their method has a very different construction of the eigenvalue problems. While they use only information near the boundary of  $\Omega_i$, we use all information of $\Omega_i$. Consequently, we think our method is more efficient, more algebraic, and more natural to extend to other discretizations, including elasticity and other positive definite symmetric systems.

The remainder of this paper is organized as follows. \Cref{Section2}
describes the discretization and the domain decomposition framework (notation, subspaces, and operators). In \Cref{Section3} we introduce and analyze MES and NOSAS methods with the exact solver. To overcome the high complexity of the coarse problem in the exact solver,  we formulate and analyze NOSAS methods with the inexact solver in \Cref{Section4}.   Finally, \Cref{numerical} gives a global overview of the NOSAS method proposed with quantitative and qualitative studies.

\section{FINITE ELEMENT SPACES AND DOMAIN DECOMPOSITION} \label{Section2} 
\subsection{Discretization}\hspace*{\fill} 

We begin by discretizing \cref{elliptic} in an algebraic framework. Let us consider a triangulation $\mathcal{T}_h$ of $\Omega$ with
$\overline{\Omega}=\bigcup_{K\in\mathcal{T}_h }{K}$, where $K$ denotes a generic (closed) element of the triangulation. We assume that the partition $\mathcal{T}_h$ is shape regular and quasi-uniform of size $O(h)$. We can either require our partition fine enough such that the coefficients $\rho(x)$ is constant in each element $K$, denoted by $\rho_K$. Or we can choose a constant approximation of coefficients $\rho(x)$ in each element K, for example, $\rho_K=\int_K\rho(x)dx$. In this paper, the finite element space $V_h(\Omega)$ consists of continuous piecewise linear functions: 
\begin{equation*}
  V_h(\Omega):=\{v\in H^1_0(\Omega); v_{|{K}}\in P_1(K),\forall K \in \mathcal{T}_h\}=\text{Span}\{\phi_k; 1\leq k\leq n \}.
\end{equation*}
where  $n$ is the number of interior nodes of $\Omega$ and $\{\phi_k\}_{1\leq k\leq n}$ are basis functions, we also note that the techniques introducing in this paper also work for any polynomials of fixed order $p$.

Specifically, for any element $K\in \mathcal{T}_h$, let $V_h(K)=\{v_{|K};v\in V_h(\Omega)\}$, then for each $K$ there exists a symmetric positive semi-definite bilinear form $a_K: V_h(K)\times V_h(K)\to \mathbb{R}$, such that:
\begin{equation*}
    a(u,v)=\sum_{K \in \mathcal{T}_h}a_K(u_{|K},v_{|K}) \hspace{40pt}	\forall u,v\in V_h(\Omega),
\end{equation*}
and there exists an element $f_K\in V_h(K)'$ (the dual space of $V_h(K)$) such that:
\begin{equation*}
    f(v)=\sum_{K \in \mathcal{T}_h}f_K(v_{|K}) \hspace{55pt}	\forall v\in V_h(\Omega).
\end{equation*}
Then the FEM matrix form associated with \cref{elliptic} can be written as: find  $u_h\in {V}_h$, such that
\begin{equation} \label{matrixfm}
    Au_h=b,
\end{equation}
where the elements of $A$ and $b$ are defined as:
\begin{equation*}
    (A)_{kl}:=a(\phi_k,\phi_l)=\sum_{K \in \mathcal{T}_h}a_K(\phi_{k|K},\phi_{l|K})\hspace{20pt}	\forall 1 \leq k,l \leq n,
\end{equation*}
and
\begin{equation*}
    (b)_{k}:=f(\phi_k)=\sum_{K \in \mathcal{T}_h}f_K(\phi_{k|K})\hspace{50pt}	\forall 1 \leq k \leq n.
\end{equation*}
\subsection{Local setting}\hspace*{\fill} 

We decompose $\Omega$ into N non-overlapping open polygonal subdomains $\Omega_i$ of diameter $O(H)$ satisfy:
\begin{equation*}
    \overline{\Omega}=\bigcup_{i=1}^N\overline{\Omega}_i \hspace{10pt} \text{and} \hspace{10pt} \Omega_i\cap\Omega_j=\emptyset, \hspace{10pt}i\not=j.
\end{equation*}

We require that each subdomain is a union of shape regular triangular elements with nodes on the boundaries of neighboring subdomains matching across the interface.  We define the interface of each subdomain $\Omega_i$ by $\Gamma_i$
and also define global interface $\Gamma$ as:  
\begin{equation*}
    \Gamma_i := \partial \Omega_i 
\backslash \partial  \Omega, \quad \mbox{and} \quad    \Gamma:=\bigcup_{i=1}^N \Gamma_i.
\end{equation*}

\subsection{Decomposition of $V_h(\Omega)$}\hspace*{\fill} 

Let us define the local finite element space  
  $  V_h(\Omega_i)=\{v_{|{\Omega}_i}; \forall v\in V_h(\Omega) \}$
and consider a family of local spaces $\{V_i, 1\leq i \leq N\}$ on $V_h(\Omega_i)$ which vanish on $\partial \Omega_i$: 
 \begin{equation*}
     V_i := \{v \in V_h(\Omega_i) \text{ and } v=0 \text{ on } \partial \Omega_i    \},
 \end{equation*}
 and we define the extrapolation operators $R_{i}^{^T}:V_i\to V_h(\Omega)$ for $1\leq i\leq N $, where $R_{i}^{^T}$ are the extension by zero outside of $\Omega_i$. Note that $R_i: V_h(\Omega)\to V_i$ are the transpose of $R_i^T$, and $R_i$ are restriction operators which map a nodal vector on $\Omega$ to a nodal vector inside $\Omega_i$. 
      
The coarse space $V_0$ is defined as the restriction of $v\in V_h(\Omega)$ on $\Gamma$:
      \begin{equation*}
    V_0=V_h(\Gamma):=\{v_{|{\Gamma}}; \forall v\in V_h(\Omega)    \}.
      %  V_0=V_h(\Gamma).
      \end{equation*}

The core of this paper is to introduce and analyze different choices of the extrapolation operator $R_0^T:V_0\to V_h(\Omega)$; see \Cref{Section3}. 
  
Then $V_h(\Omega)$ admits the following direct sum decomposition: 
 \begin{equation*}
    V_h(\Omega)=R_0^TV_0\oplus R_{1}^{T}V_1 \oplus\cdots\oplus R_{N}^{T}V_N.  
\end{equation*}

\subsection{Local and coarse solvers}\label{solver}\hspace*{\fill} 

The Schwarz operators are constructed by the local and coarse solvers.\\
Local solvers: for $1 \leq i \leq N$, first we introduce the exact local bilinear form 
\begin{equation*}
    a_i(u_i,v_i):=v_i^TA_i\,u_i=a(R_{i}^{T}u_i,R_{i}^{T}v_i)  \hspace{20pt} \forall u_i,v_i \in V_i, 
\end{equation*}
the matrix form associated with $a_i(\cdot,\cdot)$ can be written as $A_i=R_iAR_i^T$.

Next we define the projection-like operator $T_i: V_h(\Omega)\to V_h(\Omega)$ be given by $T_i:=R_{i}^{T}\tilde{T}_i$,  where $\tilde{T}_i:V_h(\Omega)\to V_i$ defined as the local solver for the following local problem:
\begin{equation}\label{localpb1}
  a_i(\tilde{T}_iu_h,v_i)=a(u_h,R_{i}^{T}v_i) \hspace{25pt}\forall v_i\in V_i, \quad 1\leq i\leq N.
\end{equation}
Coarse solver: for $i=0$, first we consider the exact coarse bilinear form
\begin{equation*}
    a_0(u_{0},v_{0}):=v_0^TA_0\,u_0=a(R_0^Tu_{0},R_0^T v_{0})  \hspace{20pt} \forall u_{0},v_{0} \in V_0,
\end{equation*}
and let
$T_0: V_h(\Omega)\to V_h(\Omega)$ be given by $T_0:= R_{0}^{T}\tilde{T}_0$,  where $\tilde{T}_0:V_h(\Omega)\to V_0$ defined as the coarse solver for the following coarse problem:
\begin{equation}\label{localpb2}
    a_0(\tilde{T}_0u_h,v_{0})=a(u_h,R_0^Tv_{0}) \hspace{25pt} \forall v_{0}\in V_0.
\end{equation}
Note that we will also consider inexact bilinear form $\hat{a}_0(\cdot,\cdot)$ with respect to inexact solver later in this paper. The matrix form of $T_i$ above can be written as:
\begin{equation*}
    T_i=R_i^TA_i^{-1}R_iA, \quad\quad 0\leq i\leq N.
\end{equation*}

The traditional additive Schwarz methods-ASM are obtained by replacing linear system \cref{matrixfm} with the preconditioned system:
\begin{equation}
\label{Def_T_A} 
    T_Au_h=g_h \quad \mbox{where} \quad  T_A:=T_0+T_1+\cdots+T_N, \hspace{10pt}g_h=\displaystyle{\sum_{i=0}^Ng_i},
\end{equation}
where $g_i$ are obtained from right-hand side of \cref{localpb1} and \cref{localpb2}; see \cite{MR2104179}.

\subsection{Notations and operators}\hspace*{\fill} 

Let us put \cref{matrixfm} in the view of algebraic construction:
\begin{equation}\label{decomp_matrix}
   \begin{bmatrix}
   A_{\Gamma\Gamma}       & A_{\Gamma I}   \\
    A_{I \Gamma}     & A_{I I} 
\end{bmatrix}
\begin{bmatrix}
  u_{h_\Gamma}         \\
    u_{h_I}    
\end{bmatrix}
=   \sum_{i=1}^{N}R^{(i)^T}\begin{bmatrix}
   A_{\Gamma\Gamma}^{(i)}       & A_{\Gamma I}^{(i)}    \\
    A_{I \Gamma}^{(i)}      & A_{I I}^{(i)} 
\end{bmatrix}
R^{(i)}
\begin{bmatrix}
  u_{h_\Gamma}          \\
    u_{h_I}   
\end{bmatrix}
=\sum_{i=1}^{N}
R^{(i)^T}
\begin{bmatrix}
  b_{\Gamma}^{(i)}          \\
    b_{I}^{(i)}   
\end{bmatrix}.
\end{equation}
Here $u_{h_\Gamma}$ and $u_{h_I}$ are the restriction of $u_h$ on $\Gamma$  and $I:=\Omega\backslash\Gamma$ respectively. $b_\Gamma^{(i)}$, $b_I^{(i)}$ are defined below. The extrapolation operators $R^{(i)^T}:V_h({\Omega}_i )\to V_h(\Omega)$ for $1\leq i\leq N $ are  the extension by zero outside of $\overline{\Omega}_i$. Note that $R^{(i)}: V_h(\Omega)\to V_h({\Omega}_i) $ are the transpose of $R^{(i)^T}$, and $R^{(i)}$ are restriction operators which map a nodal vector  on $\Omega$ to a nodal vector on $\overline{\Omega}_i$. We note that $A_i=A_{II}^{(i)}$. Throughout the paper, we use $A_i$ when talking about the local solver, and use $A_{II}^{(i)}$ when dealing the coarse solver due to Schur complement notation.

Thus we have,
\begin{equation*}
    A=\sum_{i=1}^{N}R^{(i)^T}A^{(i)}R^{(i)} \hspace{20pt}\text{and}\hspace{20pt} b=\begin{bmatrix}
  b_\Gamma          \\
    b_I   
\end{bmatrix}=\sum_{i=1}^{N}
R^{(i)^T}
\begin{bmatrix}
  b_{\Gamma}^{(i)}          \\
    b_{I}^{(i)}   
\end{bmatrix},
\end{equation*}
where $A^{(i)}$ is the Neumann matrix corresponding to the bilinear form of 
\begin{equation*}
  a^{(i)}(u_i,v_i)  =\sum_{K \in \mathcal{T}_{{h}|\bar{\Omega}_i}}a_K(u_{i|K},v_{i|K}) 
\hspace{20pt} \forall u_i,v_i \in V_h(\Omega_i),
\end{equation*}
and $b_\Gamma^{(i)}$, $b_I^{(i)}$ are the restriction of $b^{(i)}$ on $\Gamma_i$ and inside $\Omega_i$ respectively,
\begin{equation*}
    \begin{bmatrix}
  b_\Gamma^{(i)}          \\
    b_I^{(i)}  
\end{bmatrix}=b^{(i)}=\sum_{K \in \mathcal{T}_{{h}|\bar{\Omega}_i}}f_K(v_{i|K}) 
\hspace{20pt} \forall v_i \in V_h(\Omega_i).
\end{equation*}
Let us define $V_h(\Gamma_i)=\{v_{|{\Gamma}_i}; \forall v\in V_h(\Omega) \}$, and $ R_{\Gamma_i}^{T}:V_h(\Gamma_i)\to V_h(\Omega)$ is the extension by zero outside of  $\Gamma_i$.  Correspondingly, $R_{\Gamma_i}:V_h(\Omega)\to V_h(\Gamma_i)$  is the restriction operator which map a nodal vector on $\Omega$ to a nodal vector on ${\Gamma}_i$. 

Moreover, if we always first label the interface nodes and then label the interior nodes, we can always decompose the Boolean matrix $R^{(i)^T}$ as:
\begin{equation*}
    R^{(i)^T}=[R_{\Gamma_i}^{T},R_{i}^{T}]=\begin{bmatrix}
    R_{\Gamma_i\Gamma}^{T}     & 0    \\
    0     &   R_{I_iI}^{T}
    \end{bmatrix}
    \quad \mbox{and} \quad
\begin{bmatrix}
  u_{h_\Gamma}^{(i)}          \\
    u_{h_I}^{(i)}   
\end{bmatrix} = R^{(i)} 
\begin{bmatrix}
  u_{h_\Gamma}          \\
    u_{h_I}   
\end{bmatrix},
    \end{equation*}
where  $ R_{\Gamma_i\Gamma}^{T}:V_h(\Gamma_i)\to V_h(\Gamma)$ is the extension by zero outside of $\Gamma_i$, and $R_{I_iI}^{T}:V_i\to V_h(I)$
is the extension by zero outside of $\Omega_i$ and 
 $V_h(I):=\{v_{|{I}}; \forall v\in V_h(\Omega) \}$. Note that $R_{\Gamma_i\Gamma}: V_h(\Gamma)\to V_h(\Gamma_i)$ is the restriction operator which map a nodal vector on $\Gamma$ to a nodal vector on ${\Gamma}_i$, and $R_{I_iI}:V_h(I)\to V_i$ is  the restriction operator which map a nodal vector on $I$ to a nodal vector on the interior of ${\Omega}_i$. The $u_{h_\Gamma}^{(i)}$ is the restriction of $u_{h_\Gamma}$ on $\Gamma_i$, and $u_{h_I}^{(i)} $ is the restriction of  $u_{h_I}$ inside $\Omega_i$.

We now rewrite \cref{decomp_matrix} as Schur complement system:
\begin{equation*}
\small
   \begin{bmatrix}
   S      & 0   \\
    A_{I \Gamma}     & A_{I I} 
\end{bmatrix}
\begin{bmatrix}
  u_{h_\Gamma}         \\
    u_{h_I}    
\end{bmatrix}
=   \sum_{i=1}^{N}R^{(i)^T}\begin{bmatrix}
   S^{(i)}       & 0    \\
    A_{I \Gamma}^{(i)}      & A_{I I}^{(i)} 
\end{bmatrix}
\begin{bmatrix}
  u_{h_\Gamma}^{(i)}          \\
    u_{h_I}^{(i)}   
\end{bmatrix}
=\sum_{i=1}^{N}
R^{(i)^T}
\begin{bmatrix}
  b_{\Gamma}^{(i)}-     A_{\Gamma I}^{(i)}(A_{II}^{(i)})^{-1} b_{I}^{(i)}      \\
    b_{I}^{(i)}   
\end{bmatrix}
=\begin{bmatrix}
  	\tilde{b}_{\Gamma}      \\
    	{b}_{I}  
\end{bmatrix},
\end{equation*}
where 
\begin{equation*}
   S^{(i)}=A_{\Gamma\Gamma}^{(i)}-A_{\Gamma I}^{(i)}(A_{II}^{(i)})^{-1}A_{I\Gamma }^{(i)},
\end{equation*}
\begin{equation*}
    \tilde{b}_{\Gamma}=\sum_{i=1}^{N}R_{\Gamma_i\Gamma}^{T}   (b_{\Gamma}^{(i)}-A_{\Gamma I}^{(i)}(A_{II}^{(i)})^{-1} b_{I}^{(i)})  \hspace{20pt}\text{and}\hspace{20pt}S=\sum_{i=1}^{N}R_{\Gamma_i\Gamma}^{T} S^{(i)}R_{\Gamma_i\Gamma},
\end{equation*}
$R_{\Gamma_i\Gamma}^{T} $ is defined as before, and the reduced system is given by:
\begin{equation}
    Su_{h_\Gamma}=\tilde{b}_{\Gamma}.
\end{equation}
The goal for non-overlapping additive Schwarz methods is to find a good preconditioner $S_0^{-1}$ for S. See \cite{MR1943457},\cite{bjorstad2000domain},\cite{MR2770288},\cite{MR3795491}.

\section{PRECONDITIONERS: OLD AND NEW ONES} \label{Section3}\hspace*{\fill} 

In this section, we will introduce different preconditioners based on the definition of $R_0^T$. Above and throughout, for all $u\in V_h(\Omega)$, we denote $u_{\Gamma_i}\!\!=\!R_{\Gamma_i\Gamma}u_\Gamma\!=R_{\Gamma_i}u$ is the restriction of $u$ on $\Gamma_i$, where $u_\Gamma\in V_0$ is the restriction of $u$ on $\Gamma$.
And denote $\Gamma_{ih}$ the sets of nodal points on $\Gamma_i$, and $ \Omega_{ih}$ the sets of interior nodal points in $ \Omega_{i}$. 

\subsection{Method 1: Harmonic extension (the optimal choice)}\hspace*{\fill} 

 Let us define the local $a_i$-discrete harmonic extension operator $\mathcal{H}^{(i)}: V_h( \Gamma_i)  \to V_h(\Omega_i)$ as:
\begin{equation*}
    \mathcal{H}^{(i)}u_{\Gamma_i} :=\begin{cases}\hspace{30pt} u_{\Gamma_i} \hspace{65pt}  \mbox{on}~~  \Gamma_{ih}, \\
    -(A_{II}^{(i)})^{-1}A^{(i)}_{I\Gamma}u_{\Gamma_i}\hspace{35pt} \mbox{in} ~~ \Omega_{ih}.
    \end{cases}
\end{equation*}
We remind that $V_h(\Omega_i)$ assumes zero Dirichlet condition on $\partial\Omega_i\cap\partial \Omega$.
And define the global $a$-discrete harmonic extension operator $\mathcal{H}: V_0 \to  V_h(\Omega)$ as:
\begin{equation*}
    \mathcal{H} u_\Gamma :=\begin{bmatrix}u_\Gamma \\
    \displaystyle{-\!\sum_{i=1}^N}R_{I_iI}^{T}(A_{II}^{(i)})^{-1}A^{(i)}_{I\Gamma}R_{\Gamma_i\Gamma}u_\Gamma
    \end{bmatrix}.
\end{equation*}
Note that the bilinear form of Schur complement $s(\cdot,\cdot)$ can be defined as:
\begin{equation*}
  s(v_\Gamma,u_\Gamma)\!=\! v_\Gamma^T S u_\Gamma\!\!=\! v_\Gamma^T\sum_{i=1}^NR_{\Gamma_i\Gamma}^{T}  (A^{(i)}_{\Gamma \Gamma }-A^{(i)}_{\Gamma I}(A_{II}^{(i)})^{-1}A^{(i)}_{I\Gamma})R_{\Gamma_i\Gamma}  u_\Gamma\!\! =\! a(\mathcal{H}u_\Gamma,\mathcal{H}v_\Gamma ),  \hspace{5pt} \forall u_\Gamma,v_\Gamma\in V_0.
\end{equation*}
Thus
\begin{equation*}
  S = \sum_{i=1}^N R_{\Gamma_i\Gamma}^{T}  (A^{(i)}_{\Gamma \Gamma }-A^{(i)}_{\Gamma I}(A_{II}^{(i)})^{-1}A^{(i)}_{I\Gamma})R_{\Gamma_i\Gamma} =\sum_{i=1}^N R_{\Gamma_i\Gamma}^{T}
   S^{(i)} R_{\Gamma_i\Gamma} = {\mathcal{H}}^T A \mathcal{H}.
\end{equation*}

If we choose extrapolation operator $R_0^T = \mathcal{H}$, that is, the $a$-discrete harmonic extension, the corresponding ASM is a direct solver. In this case, $S_0^{-1} = S^{-1}$. Of course, this preconditioner is too expensive, therefore, we consider better ones. 

\subsection{Method 2: additive average Schwarz method (AAS)}\hspace*{\fill} 

For AAS, let us select one degree of freedom per each subdomain (we choose a constant function as representative). In order to define
a global extension from $V_0\to V_h(\Omega)$, let us first construct for each
subdomain $\Omega_i$ the distribution mappings $P^{(i)}: \mathbb{R} \to V_i$ and
 $Q^{(i)^T}: V_h(\Gamma_i) \to \mathbb{R}$ as: 
\begin{equation*}
    P^{(i)}= \begin{bmatrix}
  1 \\ 1  \\\vdots \\  1
    \end{bmatrix}
    \quad \mbox{and define} \quad  Q^{(i)}
    =     \frac{1}{m_i}
\begin{bmatrix}
  1  \\ \vdots \\  1
    \end{bmatrix},
\end{equation*}
where $m_i$ is the number of nodal points on $\partial\Omega_{i}$. For any $u_{\Gamma_i}\in V_h(\Gamma_i) $, see that
$Q^{(i)^T} u_{\Gamma_i}$ is the nodal value average of $u_{\Gamma_i}$ on $\partial\Omega_{i}$ rather than on $\Gamma_{i}$ (we note that better numerical results are obtained by averaging on
$\partial\Omega_{i}$ rather than on $ \Gamma_{i}$ for constant coefficients in each subdomain). Then we construct a global vector which consists of all the averages of each subdomain, denoted by 
\begin{equation*}
\bar{u}=\begin{bmatrix}
  \bar{u}_1  \\ \vdots \\  \bar{u}_N
    \end{bmatrix},
\end{equation*}
where $\bar{u}_i$ is the constant function in subdomain $\Omega_i$ for $1\leq i\leq N$.

Let us define the local extension operator ${\hat{\mathcal{H}}}^{(i)}: V_h(\Gamma_i)  \to V_h(\Omega_i)$ as:
\begin{equation*}
    {\hat{\mathcal{H}}}^{(i)}  u_{\Gamma_i} :=\begin{cases}\hspace{20pt} u_{\Gamma_i} \hspace{54pt} \mbox{on}~~  \Gamma_{ih}, \\
    P^{(i)}Q^{(i)^T} u_{\Gamma_i} \hspace{35pt} \mbox{in} ~~ \Omega_{ih}.
    \end{cases}
\end{equation*}
and let define the global extension operator $\hat{\mathcal{H}}: V_0 \to  V_h(\Omega)$ as:
\begin{equation*}
   \hat{\mathcal{H}}  u_\Gamma :=\begin{bmatrix}
  u_\Gamma  \\
    \displaystyle{\sum_{i=1}^N}R_{I_iI}^{T}P^{(i)}Q^{(i)^T} R_{\Gamma_i\Gamma} u_\Gamma 
 \end{bmatrix}.
\end{equation*}
We let $R_0^T = \hat{\mathcal{H}}$ and the 
coarse bilinear form for the AAS is constructed as:
\begin{equation*}
    {a}_0(u_{\Gamma},v_{\Gamma})=a(\hat{\mathcal{H}}u_{\Gamma},\hat{\mathcal{H}}v_{\Gamma})=\sum_{i=1}^Na^{(i)}(\hat{\mathcal{H}}^{(i)}u_{\Gamma_i},\hat{\mathcal{H}}^{(i)}v_{\Gamma_i})\hspace{20pt} \forall u_{\Gamma},v_{\Gamma}\in V_0,
\end{equation*}
or the inexact bilinear form of AAS
\begin{equation*}
    \tilde{a}_0(u_{\Gamma},v_{\Gamma})=\sum_{i=1}^N\sum_{x\in \partial\Omega_{ih}}{\rho}_i(u_{\Gamma_i}(x)-\bar{u}_i)(v_{\Gamma_i}(x)-\bar{v}_{i})\hspace{20pt} \forall u_{\Gamma},v_{\Gamma}\in V_0,
\end{equation*}
where $\partial\Omega_{ih}$ are the sets of nodal points on $\partial \Omega_i$ with $u_\Gamma(x)=0$ when $x\in \partial\Omega$, $\bar{u}_i =  Q^{(i)^T} u_{\Gamma_i}$ and $\bar{v}_i =  Q^{(i)^T} v_{\Gamma_i}$.
Let us define $\Omega_i^\delta\subset\Omega_i$  be the layer around $\partial\Omega_i$ with one element width. Notice that the traditional AAS is robust when the coefficients is constant on $\Omega_i^\delta$, see \cite{MR1943457}. %Motivated by \Cref{NewMethod}, we 
%can generalize the AAS to more general coefficients 
%where now the coarse bilinear form is defined with $\sum_{i=1}^N (u_{\Gamma_i} - \bar{u}_i)^T \hat{A}_{\Gamma \Gamma}^{(i)}(v_{\Gamma_i} - \bar{v}_i)$ where the $Q^{(i)}$ is replaced by the weighted average 
%\begin{equation*}
%  Q^{(i)} = \frac{\hat{A}_{\Gamma \Gamma}^{(i)} P^{(i)}}{ P^{(i)^T}
%    \hat{A}_{\Gamma \Gamma}^{(i)} P^{(i)}}.
%\end{equation*}
%Other possibilities are to replace $\hat{A}^{(i)}_{\Gamma \Gamma}$ by ${A}^{(i)}_{\Gamma \Gamma}$, ${M}^{(i)}_{\Gamma \Gamma}$ or $\hat{M}^{(i)}_{\Gamma \Gamma}$ \textcolor{red}{where $M=\int \rho(x)uv \,dx$ is the mass matrix}. By the inverse
%inequality for finite element functions, all those matrices are spectrally equivalent. 

\subsection{New method: minimum energy Schwarz method (MES)}\hspace*{\fill} 

Instead of defining $\bar{u}_i =  Q^{(i)^T} u_{\Gamma_i}$, we try to define
a better weighting of $u_{\Gamma_i}$ to take into account the coefficients
in $\Omega_i$.

Let us introduce $\mathcal{I}_i: V_h(\Gamma_i) \times  \mathbb{R} \to V_h(\Omega_i)$ as: 
\begin{equation*}
   \mathcal{I}_i (u_{\Gamma_i},\bar{v}_i)  :=\begin{cases}\hspace{5pt}u_{\Gamma_i} \hspace{16pt} \mbox{on}~~  \Gamma_{ih}, \\
    P^{(i)}\bar{v}_i \hspace{11pt} \mbox{in} ~~ \Omega_{ih}.
    \end{cases}
\end{equation*}
Here the mappings $P^{(i)}: \mathbb{R} \to V_i$ is the same in the previous section. Now instead of defining $\bar{u}_i =  Q^{(i)^T} u_{\Gamma_i}$, the idea now
is to choose $\bar{u}_i$ such that
\begin{equation*}
  a^{(i)}\big(\mathcal{I}_i(u_{\Gamma_i},\bar{u}_i),\mathcal{I}_i(u_{\Gamma_i},\bar{u}_i)\big)  
  = \min_{\bar{v}_i \in  \mathbb{R}} a^{(i)}\big(\mathcal{I}_i(u_{\Gamma_i},\bar{v}_i),\mathcal{I}_i(u_{\Gamma_i},\bar{v}_i)\big),
  \end{equation*}
and define  $ \tilde{\mathcal{H}}^{(i)}: V_h(\Gamma_i)  \to V_h(\Omega_i)$ as: 
\begin{equation*}
   \tilde{\mathcal{H}}^{(i)} u_{\Gamma_i} :=\begin{cases}\hspace{5pt}u_{\Gamma_i} \hspace{16pt} \mbox{on}~~  \Gamma_{ih}, \\
    P^{(i)}\bar{u}_i \hspace{11pt} \mbox{in} ~~ \Omega_{ih}.
    \end{cases}
\end{equation*}

The idea is that the constant $\bar{u}_i$ is chosen so that the
extension of ${u}_{\Gamma_i}$ by a constant value inside $\Omega_{i}$
has the minimum energy.  Hence, taking the derivative with respect to $\bar{u}_i$, we have $P^{(i)^T}\!\!(A^{(i)}_{I\Gamma}u_{\Gamma_i}+A^{(i)}_{II}P^{(i)}\bar{u}_i)\!\!=\!0$,
therefore, $\bar{u}_i\!=\!\!-(P^{(i)^T}A_{II}^{(i)}P^{(i)})^{-1}P^{(i)^T}\!\!A_{I\Gamma}^{(i)}u_{\Gamma_i}$, and
\begin{equation*}
    \tilde{\mathcal{H}}^{(i)}u_{\Gamma_i}  :=\begin{cases}
\hspace{60pt}u_{\Gamma_i}  \hspace{110pt} \mbox{on}~~ \Gamma_{ih},\\
 -P^{(i)}(P^{(i)^T}A_{II}^{(i)}P^{(i)})^{-1}P^{(i)^T}A_{I\Gamma}^{(i)}u_{\Gamma_i} \hspace{30pt} \mbox{in}~~\Omega_{ih}.
\end{cases} 
\end{equation*}
and let define global extension operator $\tilde{\mathcal{H}}: V_0 \to  V_h(\Omega)$ as:
\begin{equation*}
   \tilde{\mathcal{H}}  u_\Gamma :=\begin{bmatrix}u_\Gamma \\
    \displaystyle{-\sum_{i=1}^N}R_{I_iI}^{T}P^{(i)}(P^{(i)^T}A_{II}^{(i)}P^{(i)})^{-1}P^{(i)^T}A_{I\Gamma}^{(i)} R_{\Gamma_i\Gamma} u_\Gamma 
    \end{bmatrix}.
\end{equation*}
We let  $R_0^T = \tilde{\mathcal{H}}$ and the exact bilinear form given as:
\begin{small}
\begin{equation*}
\begin{split}
     a_0(u_\Gamma,v_\Gamma)= a(\tilde{\mathcal{H}}u_\Gamma,\tilde{\mathcal{H}}v_\Gamma)
   &=\sum_{i=1}^N a^{(i)}(\tilde{\mathcal{H}}^{(i)}u_{\Gamma_i},\tilde{\mathcal{H}}^{(i)}u_{\Gamma_i})\\
   &=v_\Gamma^T\sum_{i=1}^NR_{\Gamma_i\Gamma}^{T}\big(A^{(i)}_{\Gamma \Gamma }\!-\!A^{(i)}_{\Gamma I}P^{(i)}(P^{(i)^T}A^{(i)}_{II}P^{(i)})^{-1}P^{(i)^T}A^{(i)}_{I\Gamma}\big)R_{\Gamma_i\Gamma}u_\Gamma\\
    &=v_\Gamma^T\big(A_{\Gamma \Gamma }\!-\!A_{\Gamma I}P(P^TA_{II}P)^{-1}P^TA_{I\Gamma}\big)u_\Gamma \hspace{30pt} u_\Gamma,v_\Gamma\in V_0.\\
\end{split}
\end{equation*}
\end{small}
In the above equation we use the global assembling matrices:  
\begin{equation*}
    A_{\Gamma\Gamma}=\sum_{i=1}^NR_{\Gamma_i\Gamma}^{T}A_{\Gamma\Gamma}^{(i)}R_{\Gamma_i\Gamma},\hspace{5pt}A_{I\Gamma}=\sum_{i=1}^NR_{I_iI}^{T}A_{I\Gamma}^{(i)}R_{\Gamma_i\Gamma},\hspace{5pt}\text{and}\hspace{3pt} P=\sum_{i=1}^NR_{I_iI}^{T}P^{(i)}R_{\bar{u}}^{(i)},
\end{equation*}
where  $R_{\bar{u}}^{(i)}: \bar{u} \to \mathbb{R}$ is the restriction choosing the i-th entry from the $N\times 1$ vector $\bar{u}$.\\
The following two lemmas show the condition number of the MES methods. We denote $|\cdot|$ as seminorm, and $|| \cdot|| $ as full norm, and  we write $a\preceq b$ when there exists constant $C>0$, independent of $\rho$, $h$ and $H$, it depends only on the shape of the elements and the shape of the subdomain, such that $a\leq Cb$. We also write $a\asymp b$ if $a \preceq b$ and $b	\preceq a$.

\begin{lemma}
\label{3.3.1}
In i-th subdomain, if $\rho_K(x)\equiv \rho_i$ for all elements $K\subset \overline{\Omega}_i$, then \begin{equation*}
    a^{(i)}(\tilde{\mathcal{H}}^{(i)}u_{\Gamma_i},\tilde{\mathcal{H}}^{(i)}u_{\Gamma_i})\preceq\frac{H}{h}a^{(i)}(\mathcal{H}^{(i)}u_{\Gamma_i},\mathcal{H}^{(i)}u_{\Gamma_i}),
\end{equation*} where $\tilde{\mathcal{H}}^{(i)}$ is the local MES extension, and $\mathcal{H}^{(i)}$ is the local $a_i$-discrete harmonic extension.  
\end{lemma}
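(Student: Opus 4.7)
The proof splits naturally into two steps, corresponding to the two optimization principles in play.

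\textbf{Step 1 (variational comparison).} By construction, $\tilde{\mathcal{H}}^{(i)} u_{\Gamma_i}$ minimizes $a^{(i)}\!\left(\mathcal{I}_i(u_{\Gamma_i},\bar v_i),\mathcal{I}_i(u_{\Gamma_i},\bar v_i)\right)$ over all scalars $\bar v_i \in \mathbb{R}$. Since the AAS extension $\hat{\mathcal{H}}^{(i)} u_{\Gamma_i}$ is a special case of such a constant-interior extension (with $\bar v_i = Q^{(i)^T} u_{\Gamma_i}$), we obtain for free
\begin{equation*}
a^{(i)}(\tilde{\mathcal{H}}^{(i)} u_{\Gamma_i},\tilde{\mathcal{H}}^{(i)} u_{\Gamma_i}) \;\le\; a^{(i)}(\hat{\mathcal{H}}^{(i)} u_{\Gamma_i},\hat{\mathcal{H}}^{(i)} u_{\Gamma_i}).
\end{equation*}
So it suffices to prove the same $H/h$ bound for the AAS extension.

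\textbf{Step 2 (the classical AAS estimate).} This is the real work, and is the main obstacle. Because $\hat{\mathcal{H}}^{(i)} u_{\Gamma_i}$ is constant inside $\Omega_i$, its energy is supported entirely in the one-element layer $\Omega_i^\delta$ adjacent to $\partial\Omega_i$. On each element $K\subset \Omega_i^\delta$ the gradient is of order $(u_{\Gamma_i}(x)-\bar u_i)/h$ at boundary nodes, and $\rho_K\equiv\rho_i$ by hypothesis, so an element-by-element estimate yields
\begin{equation*}
a^{(i)}(\hat{\mathcal{H}}^{(i)} u_{\Gamma_i},\hat{\mathcal{H}}^{(i)} u_{\Gamma_i}) \;\preceq\; \rho_i\!\!\sum_{x\in\partial\Omega_{ih}}\!\! |u_{\Gamma_i}(x)-\bar u_i|^2 \;\preceq\; \frac{\rho_i}{h}\,\|u_{\Gamma_i}-\bar u_i\|_{L^2(\partial\Omega_i)}^2,
\end{equation*}
the last step using that on a quasi-uniform mesh the nodal $\ell^2$-norm is comparable to $h^{-1}\|\cdot\|^2_{L^2(\partial\Omega_i)}$ for piecewise linear traces.

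\textbf{Step 3 (trace and Poincaré on $\partial\Omega_i$).} Apply a discrete Poincaré/Friedrichs inequality on $\partial\Omega_i$, taking advantage of the fact that $\bar u_i$ is (comparable to) the mean value of $u_{\Gamma_i}$:
\begin{equation*}
\|u_{\Gamma_i}-\bar u_i\|_{L^2(\partial\Omega_i)}^2 \;\preceq\; H\,|u_{\Gamma_i}|_{H^{1/2}(\partial\Omega_i)}^2,
\end{equation*}
and then the standard equivalence between the $H^{1/2}$-seminorm on the boundary and the discrete harmonic energy,
\begin{equation*}
\rho_i\,|u_{\Gamma_i}|_{H^{1/2}(\partial\Omega_i)}^2 \;\asymp\; a^{(i)}(\mathcal{H}^{(i)} u_{\Gamma_i},\mathcal{H}^{(i)} u_{\Gamma_i}).
\end{equation*}
(Under the constant-coefficient hypothesis the equivalence constants depend only on the shape of $\Omega_i$.) Chaining Steps 1--3 gives the claim.

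The only delicate point is Step 3: one must handle the fact that $\bar u_i$ is the average on $\partial\Omega_i$ rather than on $\Gamma_i$, and that Dirichlet nodes on $\partial\Omega\cap\partial\Omega_i$ contribute zeros; but in the constant-$\rho_i$ setting these are dealt with by standard boundary Poincaré arguments (as in \cite{MR1943457}), so no new machinery beyond the classical AAS analysis is required.
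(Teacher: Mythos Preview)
Your proposal is correct and follows essentially the same route as the paper's proof: first use the minimality of $\tilde{\mathcal{H}}^{(i)}$ to bound by the AAS extension $\hat{\mathcal{H}}^{(i)}$, then localize the AAS energy to the boundary layer $\Omega_i^\delta$ and convert to $\frac{\rho_i}{h}\|u_{\Gamma_i}-\bar u_i\|_{L^2(\partial\Omega_i)}^2$, and finally apply a Poincar\'e inequality on $\partial\Omega_i$ together with the $H^{1/2}$--harmonic-energy equivalence. The only cosmetic difference is that the paper phrases Step~2 via the inverse inequality on $\Omega_i^\delta$ rather than a direct nodal element-by-element count, but these are equivalent standard arguments.
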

\begin{proof}
For AAS method we defined $ {\hat{\mathcal{H}}}^{(i)}  u_{\Gamma_{i}} = u_{\Gamma_{i}}$ on $\Gamma_{ih}$ and
    $ {\hat{\mathcal{H}}}^{(i)}  u_{\Gamma_{i}} = t$ in $\Omega_{ih}$ with $ t =  \frac{\int_{\Gamma_i} u_{\Gamma_i}\, dx}{\int_{\partial\Omega_i}1\, dx}$. Remember that we denoted $\Omega_i^\delta\subset\Omega_i$ to be the layer around $\partial \Omega_i$ with one element width. Then by the definition of $\tilde{\mathcal{H}}^{(i)}$, we have 
   \begin{equation*}
   a^{(i)}(\tilde{\mathcal{H}}^{(i)}u_{\Gamma_i},\tilde{\mathcal{H}}^{(i)}u_{\Gamma_i})\leq a^{(i)}({\hat{\mathcal{H}}}^{(i)}  u_{\Gamma_{i}}, {\hat{\mathcal{H}}}^{(i)}  u_{\Gamma_{i}} )=\rho_i| {\hat{\mathcal{H}}}^{(i)}  u_{\Gamma_{i}}     -t|^2_{H^1{(\Omega_i^\delta)}}.
 \end{equation*} 
 Using the inverse inequality and notice that $ {\hat{\mathcal{H}}}^{(i)} u_{\Gamma_{i}} - t=0$ in $\Omega_{ih}$, we have
  \begin{equation*}
    \rho_i| {\hat{\mathcal{H}}}^{(i)}  u_{\Gamma_{i}}  -t|^2_{H^1{(\Omega_i^\delta)}}\preceq \rho_i\frac{1}{h^2}\|    {\hat{\mathcal{H}}}^{(i)}  u_{\Gamma_{i}} -t\|^2_{L^2{(\Omega_i^\delta)}}\preceq \rho_i\frac{1}{h}\|u_{\Gamma_{i}}-t\|^2_{L^2{(\partial\Omega_i)}}.
 \end{equation*}
Then use Poincar\'e inequality on $\partial \Omega_i$ and properties of $H^{1/2}$ norm 
  \begin{equation*}
  \begin{split}
    \rho_i \frac{1}{h}\|u_{\Gamma_{i}}-t\|^2_{L^2{(\partial\Omega_i)}}&\preceq\rho_i   \frac{H}{h}  |u_{\Gamma_{i}}|_{H^{1/2}(\partial \Omega_i)} \preceq \frac{H}{h}a^{(i)}(\mathcal{H}^{(i)}u_{\Gamma_i},\mathcal{H}^{(i)}u_{\Gamma_i}).
\end{split}
 \end{equation*}
\end{proof}

Using \Cref{3.3.1}, we can show if the coefficients $\rho(x)$ is constant inside each subdomain, the condition number of MES is always smaller than AAS, and is $O(H/h)$. Moreover, we can prove that the condition number of MES is only associated with the coefficients on $\Gamma_i^\delta$, where
 $\Gamma_i^\delta$ defined as the union of all elements of $\Omega_{i}$  which touch at least one node of $\Gamma_{i}$. 
\begin{lemma}
  \label{3.3.2}
  In i-th subdomain, if $\displaystyle{\sup_{K\subset\Gamma_i^\delta}\rho_K=\overline{\rho}_i}$ and $\displaystyle{\inf_{K\subset\Gamma_i^\delta}\rho_K=\underline{\rho}_i}$, then
  \begin{equation*}
    a^{(i)}(\tilde{\mathcal{H}}^{(i)}u_{\Gamma_i},\tilde{\mathcal{H}}^{(i)}u_{\Gamma_i})\preceq \frac{\overline{\rho}_i}{\underline{\rho}_i}\frac{H^2}{h^2}a^{(i)}(\mathcal{H}^{(i)}u_{\Gamma_i},\mathcal{H}^{(i)}u_{\Gamma_i}),
\end{equation*} where $\tilde{\mathcal{H}}^{(i)}$ and $\mathcal{H}^{(i)}$ are defined above. 
\end{lemma}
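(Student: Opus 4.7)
The strategy is to reproduce the argument of Lemma 3.3.1 but with $\overline{\rho}_i$ and $\underline{\rho}_i$ replacing the single constant $\rho_i$, bridging the two sides by an inequality that uses only information on the one-element layer $\Gamma_i^\delta$. The extra $H/h$ factor compared with Lemma 3.3.1 reflects the fact that this bridging is done through a thin strip of width $O(h)$ instead of through the whole $\Omega_i$.

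First, I would repeat the first part of the proof of Lemma 3.3.1. Using the minimum-energy property of $\tilde{\mathcal{H}}^{(i)}$, I test against the AAS extension $\hat{\mathcal{H}}^{(i)}u_{\Gamma_i}$ with constant $t = Q^{(i)^T} u_{\Gamma_i}$. Since $\hat{\mathcal{H}}^{(i)}u_{\Gamma_i} - t$ is supported on the boundary layer, I can pull out $\overline{\rho}_i$ and apply the same inverse and $L^2$-trace inequalities as in Lemma 3.3.1 to obtain
\begin{equation*}
 a^{(i)}(\tilde{\mathcal{H}}^{(i)}u_{\Gamma_i},\tilde{\mathcal{H}}^{(i)}u_{\Gamma_i}) \preceq \overline{\rho}_i \frac{1}{h} \|u_{\Gamma_i}-t\|^2_{L^2(\partial\Omega_i)}.
\end{equation*}

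The new ingredient is to bound $\|u_{\Gamma_i}-t\|^2_{L^2(\partial\Omega_i)}$ in terms of $|\mathcal{H}^{(i)}u_{\Gamma_i}|^2_{H^1(\Gamma_i^\delta)}$, with an extra $H^2/h$ factor. First, I apply the Poincar\'e inequality on $\partial\Omega_i$ (a manifold of diameter $O(H)$) to write
\begin{equation*}
 \|u_{\Gamma_i}-t\|^2_{L^2(\partial\Omega_i)} \preceq H^2 \, \|\nabla_\tau u_{\Gamma_i}\|^2_{L^2(\partial\Omega_i)},
\end{equation*}
where $\nabla_\tau$ is the tangential gradient (the mismatch between $t$ and the $L^2$-mean is absorbed into the Poincar\'e constant). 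Then, for $v = \mathcal{H}^{(i)}u_{\Gamma_i}$, I bound the constant tangential gradient on each boundary edge by the full gradient on the adjacent element of $\Gamma_i^\delta$ and sum, using $|K|\asymp h^d$, to obtain the discrete tangential-trace estimate
\begin{equation*}
 \|\nabla_\tau v\|^2_{L^2(\partial\Omega_i)} \preceq \frac{1}{h} \, |v|^2_{H^1(\Gamma_i^\delta)}.
\end{equation*}
Combining these two inequalities and using $\rho_K \ge \underline{\rho}_i$ on every $K\subset \Gamma_i^\delta$ yields
\begin{equation*}
 \|u_{\Gamma_i}-t\|^2_{L^2(\partial\Omega_i)} \preceq \frac{H^2}{h} \, |\mathcal{H}^{(i)}u_{\Gamma_i}|^2_{H^1(\Gamma_i^\delta)} \le \frac{H^2}{h\,\underline{\rho}_i} \, a^{(i)}(\mathcal{H}^{(i)}u_{\Gamma_i},\mathcal{H}^{(i)}u_{\Gamma_i}).
\end{equation*}
Substituting this into the earlier display gives the claimed bound.

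The main obstacle is the discrete tangential-trace step: one must ensure that the constant depends only on shape-regularity, and not on $\rho$, $h$, or $H$. A related subtlety is matching the AAS nodal-mean constant $t=Q^{(i)^T}u_{\Gamma_i}$ with the $L^2$-mean that naturally appears in the Poincar\'e inequality on $\partial\Omega_i$; this equivalence is standard under quasi-uniformity but must be verified. Finally, one should confirm that bounding the support of $\nabla\hat{\mathcal{H}}^{(i)}u_{\Gamma_i}$ on $\Gamma_i^\delta$ (as opposed to the full layer $\Omega_i^\delta$ used in Lemma 3.3.1) introduces no additional contributions from elements touching $\partial\Omega$.
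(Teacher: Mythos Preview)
Your argument is essentially the paper's, and all the key inequalities---minimum-energy comparison, inverse inequality on the boundary layer, Poincar\'e on $\partial\Omega_i$ to the tangential $H^1$ seminorm, and the element-wise ``tangential-trace'' step $|u_{\Gamma_i}|^2_{H^1(\Gamma_i)} \preceq h^{-1}|\mathcal{H}^{(i)}u_{\Gamma_i}|^2_{H^1(\Gamma_i^\delta)}$---match the paper line for line.

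The one place where the paper is sharper is exactly the issue you flag in your last paragraph. For a subdomain that touches $\partial\Omega$, the AAS extension $\hat{\mathcal{H}}^{(i)}u_{\Gamma_i}$ equals the nonzero constant $t$ at interior nodes but $0$ on $\partial\Omega$, so $\nabla(\hat{\mathcal{H}}^{(i)}u_{\Gamma_i}-t)$ is supported on all of $\Omega_i^\delta$, not just $\Gamma_i^\delta$. On the elements of $\Omega_i^\delta\setminus\Gamma_i^\delta$ you cannot bound $\rho_K$ by $\overline{\rho}_i$, so your first display does not follow as written. The paper avoids this by treating the two subdomain types separately: for floating subdomains it uses $\hat{\mathcal{H}}^{(i)}$ exactly as you do (there $\Gamma_i^\delta=\Omega_i^\delta$), while for subdomains touching $\partial\Omega$ it replaces $\hat{\mathcal{H}}^{(i)}$ by the \emph{zero} extension $\mathcal{E}^{(i)}u_{\Gamma_i}$ (equal to $u_{\Gamma_i}$ on $\Gamma_{ih}$ and $0$ elsewhere). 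Then the support is genuinely contained in $\Gamma_i^\delta$, the $\overline{\rho}_i$ step is clean, and the Poincar\'e step becomes a Friedrichs inequality on $\partial\Omega_i$ using the zero Dirichlet data on $\partial\Omega\cap\partial\Omega_i$. With this one modification your proof goes through.
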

\begin{proof}
Since the subdomain has two types, the floating subdomain (not touches $\partial \Omega$) and the subdomain that touches $\partial \Omega$, we discuss two types separately. First, for the floating subdomain, since $\Gamma_i=\partial\Omega_i$ and $\Gamma_i^\delta=\Omega_i^\delta$, we use the AAS method and similar arguments as the previous lemma, we have
   \begin{equation*}
     a^{(i)}(\tilde{\mathcal{H}}^{(i)}u_{\Gamma_i},\tilde{\mathcal{H}}^{(i)}u_{\Gamma_i})\leq a^{(i)}({\hat{\mathcal{H}}}^{(i)}  u_{\Gamma_{i}}, {\hat{\mathcal{H}}}^{(i)}  u_{\Gamma_{i}})\leq \overline{\rho}_i |{\hat{\mathcal{H}}}^{(i)}  u_{\Gamma_{i}} -t|^2_{H^1{(\Gamma_i^\delta)}}.
 \end{equation*} 
   Using the inverse inequality and the fact that ${\hat{\mathcal{H}}}^{(i)}  u_{\Gamma_{i}} -t=0$ in $\Omega_{ih}$, we have
 \begin{equation*}
   \overline{\rho}_i |{\hat{\mathcal{H}}}^{(i)}  u_{\Gamma_{i}} -t|^2_{H^1{(\Gamma_i^\delta)}}\preceq \overline{\rho}_i \frac{1}{h^2}\|{\hat{\mathcal{H}}}^{(i)}  u_{\Gamma_{i}}-t\|^2_{L^2{(\Gamma_i^\delta)}}\preceq \overline{\rho}_i \frac{1}{h}\|u_{\Gamma_i}-t\|^2_{L^2{(\Gamma_i)}}.
 \end{equation*}
 Then use Poincar\'e inequality on $\Gamma_i$,
  \begin{equation*}
   \overline{\rho}_i \frac{1}{h}\|u_{\Gamma_i}-t\|^2_{L^2{(\Gamma_i)}}\preceq  \overline{\rho}_i\frac{H^2}{h}|u_{\Gamma_i}-t|^2_{H^1{(\Gamma_i)}}.
 \end{equation*}
 By an element-wise argument (see \cite{dryja2011technical}), we have  
 \begin{equation*}
 \small
  \overline{\rho}_i\frac{H^2}{h}|u_{\Gamma_i}\!-t|^2_{H^1{\!(\Gamma_i)\!}}\!\preceq\overline{\rho}_i \frac{H^2}{h^2}|\mathcal{H}^{(i)}\!u_{\Gamma_i}\!-t|^2_{H^1{\!(\Gamma_i^\delta)\!}}
    \! = \overline{\rho}_i \frac{H^2}{h^2}|\mathcal{H}^{(i)}\!u_{\Gamma_i}|^2_{H^1{\!(\Gamma_i^\delta)\!}}
     \leq \frac{\overline{\rho}_i}{\underline{\rho}_i}\!\frac{H^2}{h^2}a^{(i)}\!(\mathcal{H}^{(i)}\!u_{\Gamma_i}\!,\!\mathcal{H}^{(i)}\!u_{\Gamma_i}).
\end{equation*}
For the subdomain that touch $\partial \Omega$, we define $ {{\mathcal{E}}}^{(i)}  u_{\Gamma_{i}} = u_{\Gamma_{i}}$ on $\Gamma_{ih}$ and
    $ {{\mathcal{E}}}^{(i)}  u_{\Gamma_{i}} = 0$ on $\partial\Omega$ and in $\Omega_{ih}$.The above arguments hold true if we replace ${\hat{\mathcal{H}}}^{(i)}  u_{\Gamma_{i}} $ by ${\mathcal{E}}^{(i)} u_{\Gamma_{i}}$.
    
    \begin{equation*}
     a^{(i)}(\tilde{\mathcal{H}}^{(i)}u_{\Gamma_i},\tilde{\mathcal{H}}^{(i)}u_{\Gamma_i})\leq a^{(i)}({\mathcal{E}}^{(i)}  u_{\Gamma_{i}}, {\mathcal{E}}^{(i)}  u_{\Gamma_{i}})\leq \overline{\rho}_i |{\mathcal{E}}^{(i)}  u_{\Gamma_{i}} |^2_{H^1{(\Gamma_i^\delta)}},
 \end{equation*} 
 since ${\mathcal{E}}^{(i)}  u_{\Gamma_{i}}$ vanishes on $\Omega_i\backslash\Gamma_i^\delta$. Similarly we have,
 \begin{equation*}
   \overline{\rho}_i |{{\mathcal{E}}}^{(i)}  u_{\Gamma_{i}} |^2_{H^1{(\Gamma_i^\delta)}}\preceq \overline{\rho}_i \frac{1}{h}\|u_{\Gamma_i}\|^2_{L^2{(\Gamma_i)}}\preceq \overline{\rho}_i \frac{H^2}{h}|u_{\Gamma_i}|^2_{H^1{(\Gamma_i)}}\preceq  \frac{\overline{\rho}_i}{\underline{\rho}_i}\frac{H^2}{h^2}a^{(i)}(\mathcal{H}^{(i)}u_{\Gamma_i},\mathcal{H}^{(i)}u_{\Gamma_i}),
 \end{equation*}
 where we use Poincar\'e inequality on $\partial \Omega_i$ since $ {{\mathcal{H}}}^{(i)}  u_{\Gamma_{i}} = 0$ on $\partial \Omega\cap \partial \Omega_i$.

\end{proof}

 \Cref{3.3.2} shows that if $\overline{\rho}_i=\underline{\rho}_i$, which means that the coefficients $\rho(x)$ is constant in $\Gamma_i^\delta$, then we can always expect MES have condition number $O(H^2/h^2)$. Moreover, \Cref{3.3.2} estimates the worst coefficients scenario. In some special cases, which are discussed in \cref{numerical}, even if we have high-contrast coefficients in $\Gamma_i^\delta$, MES still work well. But in order to handle the general situation where the coefficients have high-contrast in $\Gamma_i^\delta$, we introduce the following new method. 

\subsection{New method: non-overlapping spectral additive Schwarz method with exact solver(NOSAS)}\hspace*{\fill} \label{NewMethod} 

First, we study the following local generalized eigenvalue problem in each subdomain $(i=1,\cdots, N)$ separately:
\begin{equation}\label{exact_eigen}
 S^{(i)} \xi_j^{(i)} :=  (A^{(i)}_{\Gamma\Gamma}-A_{\Gamma I}^{(i)}(A_{II}^{(i)})^{-1}A^{(i)}_{I \Gamma })\xi_j^{(i)}=\lambda^{(i)}_j A_{\Gamma\Gamma}^{(i)}\xi^{(i)}_j,
\hspace{30pt} (j=1,\cdots, n_i)\end{equation}
where $n_i$ is the degrees of freedom on $\Gamma_i$, and $0\leq\lambda_1^{(i)} \leq \cdots \leq \lambda_{n_i}^{(i)} \leq 1$. These eigenvalue problems are based on the Neumann matrix
associated to the non-overlapping subdomains $\Omega_i$, therefore, differ from those in
GenEO \cite{MR3093793} and AGDSW \cite{MR3812062}.

We also notice that in \cref{exact_eigen}, $u_{\Gamma_i}^TS^{(i)}u_{\Gamma_i}$ equal to the energy norm of the $a_i$-discrete harmonic extension of $u_{\Gamma_i}$ in $\Omega_i$, and $u_{\Gamma_i}^TA_{\Gamma\Gamma}^{(i)}u_{\Gamma_i}$ equal to the energy norm of zero extension of $u_{\Gamma_i}$ in $\Omega_i$.

%\textcolor{red}{Consider the the generalized eigenvalues in \cref{exact_eigen}, if the coefficients is constant in $\Omega_i$, from \Cref{3.3.1} we have $0=\lambda_1^{(i)}  \leq \lambda_2^{(i)}= O(h/H) \leq \cdots \leq \lambda_{n_i}^{(i)} \leq 1 $ for the floating  subdomain and $\lambda_1^{(i)} = O(h/H) \leq \cdots \leq \lambda_{n_i}^{(i)} \leq 1 $ for the subdomain that touches $\partial \Omega$. If the coefficients have high-contrast in $\Gamma_i^\delta$, from \Cref{3.3.2} for the worst case scenario, we might have $0=\lambda_1^{(i)} \leq \lambda_2^{(i)} =O(\underline{\rho}_ih^2/ \overline{\rho}_iH^2) \leq \cdots \leq \lambda_k < O(h/H)\leq\lambda_{k+1}^{(i)} \leq \cdots \lambda_{n_i}^{(i)} \leq 1 $ for the floating  subdomain and $\lambda_1^{(i)} =O(\underline{\rho}_ih^2/ \overline{\rho}_iH^2) \leq \cdots<\lambda_k < O(h/H)\leq\lambda_{k+1}^{(i)} \leq \cdots \lambda_{n_i}^{(i)} \leq 1 $ for the subdomain that touches $\partial \Omega$ . }
In our spectral method, first we find an optimal space $Q^{(i)}$ (which will be defined below) so that in this optimal space, the resulting $a_0(\cdot,\cdot)$ is equivalent to the bilinear form of Schur complement $s(\cdot,\cdot)$, independent of the heterogeneity of the coefficients in $\Gamma_i^\delta$. This is the result of \Cref{theo1}, see below. Next we find a space $P^{(i)}$ to represent the best $k_i$-dimensional subspace of $V_i$, and use $P^{(i)}$ construct a local extension operator $R_0^{(i)^T}$ to approximate the $a_i$-discrete harmonic extension operator $\mathcal{H}^{(i)}$, see below.  

We start by fixing a threshold $\eta=O(\frac{h}{H})$, and choose the generalized eigenvalues in \cref{exact_eigen} for each subdomain, such
that $0\leq \lambda_1^{(i)}  \leq \cdots \leq \lambda_{k_i}^{(i)} < \eta=O(\frac{h}{H})\leq\lambda_{k_i+1}^{(i)} \leq \cdots \leq \lambda_{n_i}^{(i)} \leq 1 $. In \Cref{numerical} we will show how to determine exact value of $\eta$ and the number of eigenvalues smaller than $\eta$ in each subdomain. Now we just assume that $\eta=O(h/H)$ and there are $k_i$ eigenvalues smaller than $\eta$ in $\Omega_i.$

We choose the smallest $k_i$ eigenvalues and corresponding eigenvectors in $\cref{exact_eigen}$: for $ j=1,\cdots,k_i$, let $Q_j^{(i)}=\xi_j^{(i)}$, $P_j^{(i)}=-(A_{II}^{(i)})^{-1}A^{(i)}_{I \Gamma }\xi_j^{(i)}$, $Q^{(i)}=[Q^{(i)}_1,\cdots, Q_{k_i}^{(i)}]$, $P^{(i)}=[P^{(i)}_1,\cdots, P_{k_i}^{(i)}]$, and $D^{(i)}= \text{diagonal}(1-\lambda_1,\cdots,1-\lambda_{k_i})$. Then we have the following three identities, where the left-hand sides involve operators on $\Gamma_i$ only: 
\begin{enumerate}
    \item \hspace{15pt}$-A^{(i)}_{\Gamma \Gamma}Q^{(i)}D^{(i)} = A^{(i)}_{\Gamma I}P^{(i)}$,
    \item \hspace{15pt}$-D^{(i)}Q^{(i)^T}A^{(i)}_{\Gamma \Gamma}= P^{(i)^T}A^{(i)}_{I\Gamma}$,
    \item \hspace{15pt}$D^{(i)}Q^{(i)^T}A_{\Gamma \Gamma}^{(i)}Q^{(i)}=Q^{(i)^T}A_{\Gamma \Gamma}^{(i)}Q^{(i)}D^{(i)} =
      P^{(i)^T}A^{(i)}_{II}P^{(i)}$.
\end{enumerate}
The vectors $Q^{(i)}$ consist of the generalized eigenvectors from \cref{exact_eigen}. If we wish we can normalize $Q^{(i)}$ so
that $Q^{(i)^T}A^{(i)}_{\Gamma \Gamma}Q^{(i)} $ are identity matrices and 
$Q^{(i)^T}S^{(i)} Q^{(i)}$ are diagonal matrices with eigenvalues
on the diagonal. In the implementation and in the paper, we do not use normalized eigenvectors, so we keep $Q^{(i)^T}A^{(i)}_{\Gamma \Gamma}Q^{(i)}$.

The $Q^{(i)}$ is treated as the eigenfunction on $\Gamma_i$, and $P^{(i)}$ is the lower dimensional $a_i$-discrete harmonic extension from $\Gamma_i$ to the interior $\Omega_{i}$. 
Similarly we define the local extension operator $R_0^{(i)^T}:V_h(\Gamma_i)\to V_h(\Omega_i)$ as:
\begin{equation*}
    R_0^{(i)^T}u_{\Gamma_i}  :=\begin{cases}
\hspace{46pt}u_{\Gamma_i} \hspace{124pt} \mbox{on}~~ \Gamma_{ih},\\
 -P^{(i)}(P^{(i)^T}A_{II}^{(i)}P^{(i)})^{-1}P^{(i)^T}A_{I\Gamma}^{(i)}u_{\Gamma_i} \hspace{30pt} \mbox{in}~~\Omega_{ih},
\end{cases}
\end{equation*}
or in terms of $A^{(i)}_{\Gamma\Gamma}$ and $Q^{(i)}$:
\begin{equation*}
    R_0^{(i)^T}u_{\Gamma_i}  =\begin{cases}
\hspace{46pt}u_{\Gamma_i}  \hspace{118pt} \mbox{on}~~ \Gamma_{ih},\\
 P^{(i)}(Q^{(i)^T}A_{\Gamma\Gamma}^{(i)}Q^{(i)})^{-1}Q^{(i)^T}A_{\Gamma\Gamma}^{(i)}u_{\Gamma_i} \hspace{30pt} \mbox{in}~~\Omega_{ih}.
\end{cases}
\end{equation*}

Define the global extension ${R}_0^T:V_0\to V_h(\Omega)$ as:
\begin{small}
\begin{equation*}
    {R}_0^T\!u_\Gamma\!=\!\!\!\begin{bmatrix} u_\Gamma  \\ 
    \!-\!\displaystyle{\sum_{i=1}^N}R_{\!I_iI}^{T}P^{(i)}\!(P^{(i)^T}\!\!\!A_{II}^{(i)}P^{(i)})^{\!-1}\!P^{(i)^T}\!\!\!A^{(i)}_{I\Gamma}R_{\Gamma_{\!i}\Gamma}u_\Gamma\!\\
        \end{bmatrix}\!\!\!=\!\!\begin{bmatrix} u_\Gamma  \\ 
    \!\displaystyle{\sum_{i=1}^N} R_{\!I_iI}^{T}P^{(i)}\!(Q^{(i)^T}\!\!\!A^{(i)}_{\Gamma\Gamma}Q^{(i)})^{\!-1}Q^{(i)^T}\!\!\!A_{\Gamma\Gamma}^{(i)}R_{\Gamma_{\!i}\Gamma}u_\Gamma\!\\
        \end{bmatrix}\!\!.
\end{equation*}
\end{small}
Notice that the second part of the equation can be simplified as:
\begin{equation*}
    -\displaystyle{\sum_{i=1}^N}R_{I_iI}^{T}P^{(i)}(P^{(i)^T}\!\!A_{II}^{(i)}P^{(i)})^{-1}P^{(i)^T}\!\!A^{(i)}_{I\Gamma}R_{\Gamma_i\Gamma}=-P(P^T\!A_{II}P)^{-1}P^T\!A_{I\Gamma},
\end{equation*}
where 
\begin{equation*}
    A_{II}=\sum_{i=1}^NR_{I_iI}^{T}A_{II}^{(i)}R_{I_iI}\hspace{10pt}\text{and}\hspace{20pt}A_{I\Gamma}=\sum_{i=1}^NR_{I_iI}^{T}A_{I\Gamma}^{(i)}R_{\Gamma_i\Gamma},
\end{equation*}
and $P: \vec{u}  \to \displaystyle{\bigcup_{i=1}^N} V_i $  defined as:
\begin{equation*}
    P=\sum_{i=1}^NR_{I_iI}^{T}P^{(i)}R_{\lambda_i}.
\end{equation*}
Here $R_{\lambda_i}$ is a restriction choosing $[{u}_{i1}\!,\!\cdots\!,\!{u}_{ik_i}]^T$ from $\vec{u}=[{u}_{11}\!,\!\cdots\!,\!{u}_{1k_1}\!,\!\cdots\!,\!{u}_{Nk_1}\!,\!\cdots\!,\!{u}_{Nk_N}\!]^T$\!\!, $k_i$ is the number of eigenfunctions chosen from the i-th subdomain, and $\vec{u}$ have dimension  $N_E=\!\!\!\!\displaystyle{\sum_{1\leq i\leq N}\!\!\!k_i}$, the number of all eigenvectors we chosen from all N subdomains.

And we define the exact coarse bilinear form as:
\begin{equation*}
    \begin{split}
    {a}_0(u_\Gamma,v_\Gamma)=a({R}_0^Tu_\Gamma,{R}_0^Tv_\Gamma)&=v_\Gamma^T\sum_{i=1}^NR_{\Gamma_i\Gamma}^{T}\big(A^{(i)}_{\Gamma \Gamma }\!-\!A^{(i)}_{\Gamma I}P^{(i)}(P^{(i)^T}A^{(i)}_{II}P^{(i)})^{-1}P^{(i)^T}\!\!A^{(i)}_{I\Gamma}\big)R_{\Gamma_i\Gamma}u_\Gamma\\
    &=v_\Gamma^T(A_{\Gamma \Gamma }-A_{\Gamma I }P(P^TA_{II }P)^{-1}P^TA_{I \Gamma })u_\Gamma,   \hspace{40pt} \forall u_\Gamma,v_\Gamma\in V_0,
\end{split}
\end{equation*}
or in terms of $A^{(i)}_{\Gamma\Gamma}$ and $Q^{(i)}$:
\begin{equation*}
    {a}_0(u_\Gamma,v_\Gamma)\!\!=v_\Gamma^T\sum_{i=1}^NR_{\Gamma_i\Gamma}^{T}\big(A^{(i)}_{\Gamma \Gamma }\!-\!A^{(i)}_{\Gamma\Gamma}Q^{(i)}D^{(i)}(Q^{(i)^T}\!\!A^{(i)}_{\Gamma\Gamma}Q^{(i)})^{-1}Q^{(i)^T}\!\!A^{(i)}_{\Gamma\Gamma}\big)R_{\Gamma_i\Gamma}u_\Gamma, \hspace{15pt} \forall u_\Gamma,v_\Gamma\in V_0.
\end{equation*}
The coarse solution $w_\Gamma = \tilde{T}_0 u_h$ by the coarse problem \cref{localpb2} can be obtained in matrix form by:
\begin{equation*}
\small
   \!\!\sum_{i=1}^N\!\!  R_{\Gamma_{\!i}\Gamma}^{T}\!\big(\! A^{(i)}_{\Gamma\Gamma}\!-\!A^{(i)}_{\Gamma I}P^{(i)}\!(\!P^{(i)^T}\!\!\!A^{(i)}_{II}P^{(i)})^{\!-1}\!P^{(i)^T}\!\!\!A^{(i)}_{I\Gamma}\big)\!R_{\Gamma_{\!i}\Gamma}  w_\Gamma\!\!=\!\! \sum_{i=1}^N\!\! R_{\Gamma_{\!i}\Gamma}^{T}\!\big(  b^{(i)}_\Gamma\!\!-\!A^{(i)}_{\Gamma I}P^{(i)}\!(\!P^{(i)^T}\!\!\!A^{(i)}_{II}\!P^{(i)}\!)^{\!-1}\!P^{(i)^T}\!b_I^{(i)}\!\big),
\end{equation*}
or equivalently via:
\begin{equation*}
\small
   \!\!\sum_{i=1}^N\!\!R_{\Gamma_{\!i}\Gamma}^T\!\big(\!A^{(i)}_{\Gamma \Gamma }\!-\!A^{(i)}_{\Gamma\Gamma}\!Q^{(i)}\!\!D^{(i)}\!(\!Q^{(i)^T}\!\!\!\!A^{(i)}_{\Gamma\Gamma}Q^{(i)}\!)^{\!-1}\!Q^{(i)^T}\!\!\!A^{(i)}_{\Gamma\Gamma}\!\big)\!R_{\Gamma_{\!i}\Gamma}w_\Gamma\!\!=\!\!\!\sum_{i=1}^N\!\!R_{\Gamma_{\!i}\Gamma}^T\!\big(\! b_\Gamma^{(i)}\!+\!A^{(i)}_{\Gamma \Gamma}\!Q^{(i)}\!(\!Q^{(i)^T}\!\!\!A^{(i)}_{\Gamma \Gamma}\!Q^{(i)}\!)^{\!-1}\!P^{(i)^T}\!b_I^{(i)}\!\big).
\end{equation*}

Now we will show the condition number of NOSAS is only associated with the eigenvalues of \cref{exact_eigen} greater than $\eta$. First, let us prove a theorem that holds in each subdomain locally.
\begin{theorem} \label{theo1} 
  Let $\Pi^{(i)}_Su_{\Gamma_i}$ be the projection of $u_{\Gamma_i}\in V_h(\Gamma_i)$ onto the eigenfunctions space, Span$\{Q^{(i)}\}$. That
  is, $\Pi^{(i)}_Su_{\Gamma_i}:= Q^{(i)}(Q^{(i)^T}\!\!A_{\Gamma\Gamma}^{(i)}Q^{(i)})^{-1}Q^{(i)^T}\!\!A_{\Gamma\Gamma}^{(i)}u_{\Gamma_i}$. Let define the local bilinear form for $i=1,\cdots, N$:
  \begin{equation*}
      a^{(i)}_0(u_{\Gamma_i},v_{\Gamma_i})= v_{\Gamma_i}^T(A^{(i)}_{\Gamma \Gamma }\!-\!A^{(i)}_{\Gamma\Gamma}Q^{(i)}D^{(i)}(Q^{(i)^T}\!\!\!A^{(i)}_{\Gamma\Gamma}Q^{(i)})^{-1}Q^{(i)^T}\!\!\!A^{(i)}_{\Gamma\Gamma}\big)u_{\Gamma_i}, 
  \end{equation*}
  where $u_{\Gamma_i},v_{\Gamma_i}\in V_h(\Gamma_i)$. Then, we have
  \[
    {a}^{(i)}_0(u_{\Gamma_i},v_{\Gamma_i})= (\Pi_S^{(i)}v_{\Gamma_i})^T S^{(i)} (\Pi_S^{(i)}u_{\Gamma_i})+ (v_{\Gamma_i} - \Pi_S^{(i)}v_{\Gamma_i})^T
    A^{(i)}_{\Gamma \Gamma} (u_{\Gamma_i}- \Pi_S^{(i)}u_{\Gamma_i}).
  \]
\end{theorem}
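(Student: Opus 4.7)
The plan is to recognize $\Pi_S^{(i)}$ as the $A^{(i)}_{\Gamma\Gamma}$-orthogonal projection onto $\mathrm{Span}(Q^{(i)})$ and then reduce the claimed identity to pure linear algebra using the eigenvalue relation $S^{(i)} Q^{(i)} = A^{(i)}_{\Gamma\Gamma} Q^{(i)} \Lambda^{(i)}$ with $D^{(i)} = I - \Lambda^{(i)}$ (equivalently, identities 1--3 stated just before the theorem).

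First I would verify the projector properties: $(\Pi_S^{(i)})^2 = \Pi_S^{(i)}$ follows immediately from the definition, and $A^{(i)}_{\Gamma\Gamma}\Pi_S^{(i)} = (\Pi_S^{(i)})^T A^{(i)}_{\Gamma\Gamma}$ is symmetry in the $A^{(i)}_{\Gamma\Gamma}$ inner product. The key consequence I want to extract is $Q^{(i)T} A^{(i)}_{\Gamma\Gamma}(u_{\Gamma_i} - \Pi_S^{(i)} u_{\Gamma_i}) = 0$, so that when I substitute into the definition of $a_0^{(i)}$ the cross factor $Q^{(i)T}A^{(i)}_{\Gamma\Gamma}u_{\Gamma_i}$ can be rewritten as $Q^{(i)T}A^{(i)}_{\Gamma\Gamma}\Pi_S^{(i)}u_{\Gamma_i}$, and similarly for $v_{\Gamma_i}$.

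Next, write $\Pi_S^{(i)}u_{\Gamma_i} = Q^{(i)}\alpha$ and $\Pi_S^{(i)}v_{\Gamma_i} = Q^{(i)}\beta$. The subtracted piece in $a_0^{(i)}$ collapses to $\beta^T Q^{(i)T}A^{(i)}_{\Gamma\Gamma}Q^{(i)}\,D^{(i)}\,(Q^{(i)T}A^{(i)}_{\Gamma\Gamma}Q^{(i)})^{-1}Q^{(i)T}A^{(i)}_{\Gamma\Gamma}Q^{(i)}\alpha$, which reduces to $\beta^T Q^{(i)T}A^{(i)}_{\Gamma\Gamma}Q^{(i)}D^{(i)}\alpha$ after commuting $D^{(i)}$ past $Q^{(i)T}A^{(i)}_{\Gamma\Gamma}Q^{(i)}$ using identity 3. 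This gives
\begin{equation*}
a_0^{(i)}(u_{\Gamma_i},v_{\Gamma_i}) = v_{\Gamma_i}^T A^{(i)}_{\Gamma\Gamma} u_{\Gamma_i} - \beta^T Q^{(i)T} A^{(i)}_{\Gamma\Gamma} Q^{(i)} D^{(i)} \alpha.
\end{equation*}

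Finally I would expand the right-hand side of the theorem. The $A^{(i)}_{\Gamma\Gamma}$-orthogonality of $\Pi_S^{(i)}$ gives $(v_{\Gamma_i}-\Pi_S^{(i)}v_{\Gamma_i})^T A^{(i)}_{\Gamma\Gamma}(u_{\Gamma_i}-\Pi_S^{(i)}u_{\Gamma_i}) = v_{\Gamma_i}^T A^{(i)}_{\Gamma\Gamma} u_{\Gamma_i} - \beta^T Q^{(i)T}A^{(i)}_{\Gamma\Gamma}Q^{(i)}\alpha$, and the eigenvalue relation gives $(\Pi_S^{(i)}v_{\Gamma_i})^T S^{(i)}(\Pi_S^{(i)}u_{\Gamma_i}) = \beta^T Q^{(i)T}A^{(i)}_{\Gamma\Gamma}Q^{(i)}(I-D^{(i)})\alpha$. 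Adding these cancels the unwanted $\beta^T Q^{(i)T}A^{(i)}_{\Gamma\Gamma}Q^{(i)}\alpha$ terms and reproduces exactly the expression derived above for $a_0^{(i)}$. The only subtlety is recognizing that $\Pi_S^{(i)}$ is self-adjoint with respect to the $A^{(i)}_{\Gamma\Gamma}$ inner product rather than the Euclidean one, and then carefully using identity 3 to commute $D^{(i)}$ through the Gram matrix $Q^{(i)T}A^{(i)}_{\Gamma\Gamma}Q^{(i)}$; everything else is routine bookkeeping.
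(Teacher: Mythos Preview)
Your proposal is correct and follows essentially the same approach as the paper: both arguments hinge on the $A^{(i)}_{\Gamma\Gamma}$-orthogonality of the decomposition $u_{\Gamma_i}=\Pi^{(i)}_S u_{\Gamma_i}+(u_{\Gamma_i}-\Pi^{(i)}_S u_{\Gamma_i})$ together with the eigenvalue relation $Q^{(i)T}S^{(i)}Q^{(i)}=Q^{(i)T}A^{(i)}_{\Gamma\Gamma}Q^{(i)}(I-D^{(i)})$. The only cosmetic difference is that the paper expands $a_0^{(i)}$ directly in terms of $u_1,u_2,v_1,v_2$ and simplifies in place, whereas you introduce coordinate vectors $\alpha,\beta$ and match the two sides; note also that your appeal to identity~3 to ``commute $D^{(i)}$'' is not actually needed at that step, since $(Q^{(i)T}A^{(i)}_{\Gamma\Gamma}Q^{(i)})^{-1}Q^{(i)T}A^{(i)}_{\Gamma\Gamma}Q^{(i)}=I$ cancels directly.
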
 
\begin{proof}

Let us denote $u_{\Gamma_i}=u_1+u_2$, $v_{\Gamma_i}=v_1+v_2$, $u_{\Gamma_i},v_{\Gamma_i}\in V_h(\Gamma_i)$. And $u_1 = \Pi^{(i)}_Su_{\Gamma_i}$, $u_2 = u_{\Gamma_i} -\Pi^{(i)}_Su_{\Gamma_i}$, $v_1 = \Pi^{(i)}_Sv_{\Gamma_i}$, $v_2 = v_{\Gamma_i} -\Pi^{(i)}_Sv_{\Gamma_i}$. Then  
    \begin{equation*}
    {a}^{(i)}_0(u_{\Gamma_i},v_{\Gamma_i})=(v_1+v_2)^T(A^{(i)}_{\Gamma \Gamma }-A^{(i)}_{\Gamma \Gamma }Q^{(i)}D^{(i)}(Q^{(i)^T}A^{(i)}_{\Gamma \Gamma }Q^{(i)})^{-1}Q^{(i)^T}A^{(i)}_{\Gamma \Gamma })(u_1+u_2).
\end{equation*}
We note that $v_1^TA^{(i)}_{\Gamma \Gamma }u_2=0$, $v_2^TA^{(i)}_{\Gamma \Gamma }u_1=0$ and $Q^{(i)^T}A^{(i)}_{\Gamma \Gamma }u_2=0$. Thus
\begin{equation*}
    {a}^{(i)}_0(u_{\Gamma_i},v_{\Gamma_i})=v_1^T(A^{(i)}_{\Gamma \Gamma }-A^{(i)}_{\Gamma \Gamma }Q^{(i)}D^{(i)}(Q^{(i)^T}A^{(i)}_{\Gamma \Gamma }Q^{(i)})^{-1}Q^{(i)^T}A^{(i)}_{\Gamma \Gamma })u_1+v_2^TA_{\Gamma\Gamma}^{(i)}u_2.
\end{equation*}
For any $\xi \in\text{Span}(Q^{(i)})$,
$$\xi^TA^{(i)}_{\Gamma \Gamma }Q^{(i)}D^{(i)}(Q^{(i)^T}A^{(i)}_{\Gamma \Gamma }Q^{(i)})^{-1}Q^{(i)^T}A^{(i)}_{\Gamma \Gamma }\xi=(1-\lambda^{(i)})\xi^TA^{(i)}_{\Gamma \Gamma}\xi,$$ so
$$\xi^T(A^{(i)}_{\Gamma \Gamma }-A^{(i)}_{\Gamma \Gamma }Q^{(i)}D^{(i)}(Q^{(i)^T}A^{(i)}_{\Gamma \Gamma }Q^{(i)})^{-1}Q^{(i)^T}A^{(i)}_{\Gamma \Gamma })\xi=\lambda \xi^TA^{(i)}_{\Gamma \Gamma}\xi=\xi^TS^{(i)}\xi,$$ 
and therefore 
$$v_1^T(A^{(i)}_{\Gamma \Gamma }-A^{(i)}_{\Gamma \Gamma }Q^{(i)}D^{(i)}(Q^{(i)^T}A^{(i)}_{\Gamma \Gamma }Q^{(i)})^{-1}Q^{(i)^T}A^{(i)}_{\Gamma \Gamma })u_1=v_1^TS^{(i)}u_1.$$ Hence,
\begin{equation*}
     {a}^{(i)}_0(u_{\Gamma_i},v_{\Gamma_i})=v_1^TS^{(i)}u_1+v_2^TA^{(i)}_{\Gamma \Gamma }u_2.
\end{equation*}
\end{proof}

We will also need the analysis of the following results.

\begin{lemma}\label{bounda0} 
Let $\eta$ be the threshold of \cref{exact_eigen}, for all $u_\Gamma \in V_0$ holds,
  \[
  {a}_0(u_\Gamma,u_\Gamma) =\sum_{i=1}^Na^{(i)}_0(R_{\Gamma_i\Gamma}u_\Gamma,R_{\Gamma_i\Gamma}u_\Gamma)\leq \sum_{i=1}^N\frac{1}{\eta}u_\Gamma^TR_{\Gamma_i\Gamma}^{T} S^{(i)} R_{\Gamma_i\Gamma}u_\Gamma=\frac{1}{\eta}u_\Gamma^T S u_\Gamma.
  \]
 \end{lemma}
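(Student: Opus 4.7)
The plan is to exploit the splitting provided by \Cref{theo1} on each subdomain, and then use the generalized eigenvalue problem \cref{exact_eigen} to bound each piece by $(1/\eta)$ times the Schur complement energy.

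First, I would apply \Cref{theo1} with $u_{\Gamma_i} = R_{\Gamma_i\Gamma} u_\Gamma$ to write
\[
a_0^{(i)}(u_{\Gamma_i},u_{\Gamma_i}) = u_1^T S^{(i)} u_1 + u_2^T A^{(i)}_{\Gamma\Gamma} u_2,
\]
where $u_1 = \Pi_S^{(i)} u_{\Gamma_i}$ and $u_2 = u_{\Gamma_i} - \Pi_S^{(i)} u_{\Gamma_i}$. The goal is therefore to bound each of these two terms by $(1/\eta)\, u_{\Gamma_i}^T S^{(i)} u_{\Gamma_i}$.

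Next I would verify the key $S^{(i)}$-orthogonality $u_1^T S^{(i)} u_2 = 0$. Writing $u_1 = Q^{(i)} c$, the eigenvalue identity $S^{(i)} Q^{(i)} = A^{(i)}_{\Gamma\Gamma} Q^{(i)} \Lambda^{(i)}$ (with $\Lambda^{(i)} = \mathrm{diag}(\lambda_1^{(i)},\ldots,\lambda_{k_i}^{(i)})$) gives $u_1^T S^{(i)} u_2 = c^T \Lambda^{(i)} Q^{(i)^T} A^{(i)}_{\Gamma\Gamma} u_2$, and the latter vanishes because $u_2$ is the $A^{(i)}_{\Gamma\Gamma}$-orthogonal complement of $\mathrm{Span}(Q^{(i)})$ by definition of $\Pi_S^{(i)}$. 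Consequently,
\[
u_{\Gamma_i}^T S^{(i)} u_{\Gamma_i} = u_1^T S^{(i)} u_1 + u_2^T S^{(i)} u_2.
\]

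Then I would handle the $A^{(i)}_{\Gamma\Gamma}$-term on $u_2$ using the spectral threshold: since $u_2$ lies in the $A^{(i)}_{\Gamma\Gamma}$-orthogonal span of the eigenvectors $\xi_{k_i+1}^{(i)},\ldots,\xi_{n_i}^{(i)}$ whose eigenvalues satisfy $\lambda_j^{(i)} \geq \eta$, the Rayleigh quotient estimate yields
\[
u_2^T A^{(i)}_{\Gamma\Gamma} u_2 \leq \tfrac{1}{\eta}\, u_2^T S^{(i)} u_2.
\]
Combining this with the orthogonal decomposition above, and using $\eta \leq 1$ (consistent with $\eta = O(h/H)$ and $\lambda_j^{(i)} \leq 1$) to absorb the $u_1^T S^{(i)} u_1$ term into $(1/\eta)\, u_1^T S^{(i)} u_1$, I would obtain
\[
a_0^{(i)}(u_{\Gamma_i},u_{\Gamma_i}) \leq u_1^T S^{(i)} u_1 + \tfrac{1}{\eta} u_2^T S^{(i)} u_2 \leq \tfrac{1}{\eta}\, u_{\Gamma_i}^T S^{(i)} u_{\Gamma_i}.
\]
Summing over $i$ and recognizing $\sum_i R_{\Gamma_i\Gamma}^T S^{(i)} R_{\Gamma_i\Gamma} = S$ finishes the proof. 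There is no real obstacle here; the only subtle point is confirming the $S^{(i)}$-orthogonality $u_1^T S^{(i)} u_2 = 0$, which depends crucially on the mass-matrix $A^{(i)}_{\Gamma\Gamma}$ used in \cref{exact_eigen} being the same one that defines $\Pi_S^{(i)}$.
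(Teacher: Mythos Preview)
Your proposal is correct and follows essentially the same route as the paper: apply \Cref{theo1} to split $a_0^{(i)}$ into $u_1^T S^{(i)} u_1 + u_2^T A^{(i)}_{\Gamma\Gamma} u_2$, use the eigenvalue threshold on the complement to bound $u_2^T A^{(i)}_{\Gamma\Gamma} u_2 \leq \eta^{-1} u_2^T S^{(i)} u_2$, invoke $\eta\leq 1$ on the first term, and sum. You are in fact more careful than the paper at one point: the identity $u_1^T S^{(i)} u_1 + u_2^T S^{(i)} u_2 = u_{\Gamma_i}^T S^{(i)} u_{\Gamma_i}$ requires the $S^{(i)}$-orthogonality $u_1^T S^{(i)} u_2 = 0$, which the paper uses implicitly but you verify explicitly via $S^{(i)} Q^{(i)} = A^{(i)}_{\Gamma\Gamma} Q^{(i)} \Lambda^{(i)}$ and the $A^{(i)}_{\Gamma\Gamma}$-orthogonality built into $\Pi_S^{(i)}$.
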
 
\begin{proof} 
First remember the property of generalized eigenproblem \cref{exact_eigen}, 
$$v_{\Gamma_i}^TS^{(i)}v_{\Gamma_i} <  \eta\; v_{\Gamma_i}^T A^{(i)}_{\Gamma\Gamma}v_{\Gamma_i}  \quad \forall v_{\Gamma_i} \in \mbox{Range}(Q^{(i)}),$$
and 
  $$v_{\Gamma_i}^TS^{(i)}v_{\Gamma_i} \ge  \eta\; v_{\Gamma_i}^T A^{(i)}_{\Gamma\Gamma}v_{\Gamma_i} \quad \forall v_{\Gamma_i} \in \mbox{Range}(Q^{(i)^{\perp}}).$$
Then using \cref{theo1} and denoting $u^{(i)} = \Pi^{(i)}_S R_{\Gamma_i\Gamma}u_\Gamma$ and $v^{(i)} = R_{\Gamma_i\Gamma}u_\Gamma - u^{(i)}$, we have 
\begin{equation*}
    \begin{split}
{a}^{(i)}_0(R_{\Gamma_i\Gamma}u_\Gamma,R_{\Gamma_i\Gamma}u_\Gamma) = & u^{(i)^T}\!\!S^{(i)}u^{(i)}+ v^{(i)^T}\!\!A^{(i)}_{\Gamma \Gamma} v^{(i)} \leq u^{(i)^T}\!\!S^{(i)}u^{(i)}+\frac{1}{\eta}v^{(i)^T}S^{(i)}v^{(i)} \\
\leq& \frac{1}{\eta}  u^{(i)^T}\!\!S^{(i)}u^{(i)}+\frac{1}{\eta}v^{(i)^T}S^{(i)}v^{(i)}   \hspace{20pt}\text{(use the fact that}\hspace{5pt} 0<\eta\leq1)\\
=&\frac{1}{\eta}  u_\Gamma^TR_{\Gamma_i\Gamma}^{T}S^{(i)}R_{\Gamma_i\Gamma}u_\Gamma\hspace{20pt}\text{(this is true for all} \hspace{5pt}1\leq i \leq N). 
\end{split}
\end{equation*}
Also, from the definition of ${a}_0(u_\Gamma,u_\Gamma)$ and ${a}^{(i)}_0(u_{\Gamma_i},u_{\Gamma_i})$, we have 
\begin{equation*}
\begin{split}
  {a}_0(u_\Gamma,u_\Gamma)\!\!&=u_\Gamma^T\!\sum_{i=1}^NR_{\Gamma_i\Gamma}^{T}\big(A^{(i)}_{\Gamma \Gamma }\!-\!A^{(i)}_{\Gamma\Gamma}Q^{(i)}D^{(i)}(Q^{(i)^T}\!\!A^{(i)}_{\Gamma\Gamma}Q^{(i)})^{-1}Q^{(i)^T}\!\!A^{(i)}_{\Gamma\Gamma}\big)R_{\Gamma_i\Gamma}u_\Gamma\\
  &=\sum_{i=1}^Na_0^{(i)}(R_{\Gamma_i\Gamma}u_\Gamma,R_{\Gamma_i\Gamma}u_\Gamma)\leq \sum_{i=1}^N\frac{1}{\eta}u_\Gamma^TR_{\Gamma_i\Gamma}^{T} S^{(i)} R_{\Gamma_i\Gamma}u_\Gamma=\frac{1}{\eta}u_\Gamma^T S u_\Gamma.
  \end{split}
\end{equation*}
\end{proof}
Notice the above \cref{bounda0} still holds if replace $\eta$ by $\lambda_{k_i+1}^{(i)}$, that is the smallest eigenvalue which greater than $\eta$ in \cref{exact_eigen}. Denoted  $\lambda_{min}(\eta)=\displaystyle{\min_{1\leq i\leq N}}\{\lambda_{k_i+1}^{(i)} \}$, the smallest eigenvalue which greater than $\eta$ for all subdomain. In view of the abstract theory of ASM, see \cite[Chapter 2]{MR2104179}, the following three key assumptions can lead to the condition number of NOSAS.

\begin{lemma} (Assumption i) Let $C_0^2 = 2 + \frac{3}{\lambda_{min}(\eta)}$. Then,
  for any $u\in V_h(\Omega)$, there exist $u_i\in V_i$ for  $0\leq i\leq N$, such that $u={R}_0^Tu_0+ \sum_{i=1}^N R_{i}^{T}u_i$ and satisfies
  \[ \sum_{i=0}^N {a}_i(u_i,u_i)\leq  C_0^2 a(u,u).\]
\end{lemma}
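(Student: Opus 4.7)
The natural candidate decomposition is the obvious one given the direct sum $V_h(\Omega) = R_0^T V_0 \oplus R_1^T V_1 \oplus \cdots \oplus R_N^T V_N$: set $u_0 := u_\Gamma \in V_0$ (the trace of $u$ on $\Gamma$) and, for $1 \leq i \leq N$, define $u_i \in V_i$ by $u_i := (u - R_0^T u_0)|_{\Omega_i}$. Since $R_0^T u_\Gamma$ agrees with $u_\Gamma$ on $\Gamma_{ih}$, the function $u - R_0^T u_0$ vanishes on $\Gamma$, so $u_i$ indeed lies in $V_i$ and $u = R_0^T u_0 + \sum_{i=1}^N R_i^T u_i$. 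Thus the algebraic part of the statement is immediate, and the work reduces to bounding the energies of the pieces.

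For the coarse component I will invoke \Cref{bounda0} to obtain ${a}_0(u_0, u_0) \leq \frac{1}{\lambda_{\min}(\eta)}\, u_\Gamma^T S u_\Gamma$. The Schur complement energy $u_\Gamma^T S u_\Gamma = a(\mathcal{H}u_\Gamma, \mathcal{H}u_\Gamma)$ is, by construction of the $a$-discrete harmonic extension, the minimum of $a(w,w)$ over all $w \in V_h(\Omega)$ whose trace on $\Gamma$ equals $u_\Gamma$. Since $u$ itself is one such extension, $u_\Gamma^T S u_\Gamma \leq a(u,u)$, and therefore
\begin{equation*}
 a_0(u_0,u_0) \leq \frac{1}{\lambda_{\min}(\eta)}\, a(u,u).
\end{equation*}

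For the local components, because $u - R_0^T u_0$ vanishes on $\Gamma$, the global bilinear form splits into a sum of subdomain contributions that coincide exactly with $a_i(u_i,u_i) = a^{(i)}(u_i,u_i)$ (the zero-extension outside $\Omega_i$ kills the other terms). Hence
\begin{equation*}
 \sum_{i=1}^N a_i(u_i,u_i) = a\bigl(u - R_0^T u_0,\, u - R_0^T u_0\bigr).
\end{equation*}
An application of the energy triangle inequality $(x+y)^2 \leq 2x^2 + 2y^2$ followed by the coarse bound yields
\begin{equation*}
 \sum_{i=1}^N a_i(u_i,u_i) \leq 2\, a(u,u) + 2\, a_0(u_0,u_0) \leq \Bigl(2 + \tfrac{2}{\lambda_{\min}(\eta)}\Bigr) a(u,u).
\end{equation*}
Adding the coarse contribution gives $\sum_{i=0}^N a_i(u_i,u_i) \leq \bigl(2 + \tfrac{3}{\lambda_{\min}(\eta)}\bigr) a(u,u)$, which is the desired $C_0^2$.

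The only nontrivial step is the coarse bound: one must recognize that \Cref{bounda0} together with the minimum-energy characterization of the harmonic extension converts the NOSAS coarse energy $a(R_0^T u_\Gamma, R_0^T u_\Gamma)$ into an $a(u,u)$ estimate with the explicit spectral constant $1/\lambda_{\min}(\eta)$. Everything else is bookkeeping and the elementary triangle inequality, so I expect no further obstacles.
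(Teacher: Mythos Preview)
Your proof is correct and follows essentially the same route as the paper: the same decomposition $u_0 = u_\Gamma$, the same use of \Cref{bounda0} combined with the minimum-energy property $u_\Gamma^T S u_\Gamma \leq a(u,u)$ of the harmonic extension, and the same triangle-inequality split of $a(u - R_0^T u_0, u - R_0^T u_0)$. The paper collapses the steps into $3a_0(u_0,u_0) + 2a(u,u)$ before applying \Cref{bounda0}, whereas you bound the coarse and local pieces separately and add; the arithmetic is identical.
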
  
  \begin{proof} The decomposition is unique, given by $u_0 = u_\Gamma$ and
   the others $u_i$ obtained from
    $\displaystyle{\sum_{i=1}^N  R_{i}^{T}u_i = u - R_0^T  u_\Gamma}$. Hence, our decomposition
   satisfies \\
    \begin{equation*}
        \begin{split}
          \sum_{i=0}^N{a}_i(u_i,u_i)
        &={a}_0(u_0,u_0)+\sum_{i=1}^N{a}_i(u_i,u_i)\\
        &=a({R}_0^Tu_0,{R}_0^Tu_0)+\sum_{i=1}^Na(R^{T}_{i}u_i,R^{T}_{i}u_i)\\
        &=a({R}_0^Tu_0,{R}_0^Tu_0)+a(u-{R}_0^Tu_0,u-{R}_0^Tu_0) \hspace{10pt}\text{(orthogonality of each subdomain)}\\
        &\leq a({R}_0^Tu_0,{R}_0^Tu_0)+2a(u,u)+2a({R}_0^Tu_0,{R}_0^Tu_0)\\
      %  &= 2a(u,u)+3{a}_0(u_0,u_0)\\
        &\leq 2a(u,u)+\frac{3}{\lambda_{min}(\eta)}a(\mathcal{H}u_0,\mathcal{H}u_0)\leq(2+\frac{3}{\lambda_{min}(\eta)})a(u,u) \hspace{10pt}\text{(use  \Cref{bounda0}}).
        \end{split}
    \end{equation*}
\end{proof}

\begin{lemma} (Assumption ii) We have $\mu(\epsilon)=1$ for the spectral radius of matrix $\epsilon=\{\epsilon_{ij}\}_{i,j=1,\cdots,N}$, defined by
$$|a(R_i^Tu_i,R_j^Tu_j)|\leq \epsilon_{ij}a^{1/2}(R_i^Tu_i,R_i^Tu_i)a^{1/2}(R_j^Tu_j,R_j^Tu_j)  \hspace{20pt} \forall u_i\in V_i,\text{ and   }\forall u_j\in V_j.$$
\end{lemma}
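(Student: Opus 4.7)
The plan is to exploit the non-overlapping structure of the subdomains to show that $\epsilon$ is in fact the identity matrix, from which $\mu(\epsilon)=1$ follows trivially.

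First, I would recall that $V_i$ consists of functions in $V_h(\Omega_i)$ that vanish on $\partial\Omega_i$, and that $R_i^T$ is the extension by zero outside $\Omega_i$. Consequently, for any $u_i\in V_i$, the function $R_i^T u_i$ is a continuous piecewise linear function on $\mathcal{T}_h$ that is identically zero on every element not contained in $\overline{\Omega}_i$. Since the mesh $\mathcal{T}_h$ resolves the subdomain partition (elements do not cross the interface $\Gamma$), the gradient $\nabla(R_i^T u_i)$ is supported on elements contained in $\overline{\Omega}_i$.

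The key step is to compare supports of gradients for $i\neq j$. Because the subdomains satisfy $\Omega_i\cap\Omega_j=\emptyset$ and elements of $\mathcal{T}_h$ align with $\Gamma$, no element of $\mathcal{T}_h$ lies in both $\overline{\Omega}_i$ and $\overline{\Omega}_j$ (interior of elements are disjoint). Therefore the product $\nabla(R_i^T u_i)\cdot\nabla(R_j^T u_j)$ vanishes almost everywhere on $\Omega$, and
\begin{equation*}
a(R_i^T u_i, R_j^T u_j)=\int_\Omega \rho(x)\,\nabla(R_i^T u_i)\cdot\nabla(R_j^T u_j)\,dx=0 \quad \forall\, i\neq j.
\end{equation*}
This forces $\epsilon_{ij}=0$ for $i\neq j$, while the diagonal entries can be taken as $\epsilon_{ii}=1$ by the Cauchy--Schwarz inequality. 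Thus $\epsilon$ is the identity matrix and $\mu(\epsilon)=1$.

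There is no real obstacle here; the statement is essentially a structural consequence of the non-overlapping partition together with the zero-trace condition in the definition of $V_i$. The only point that requires care is verifying that the mesh alignment with $\Gamma$ makes the element supports of $R_i^T u_i$ and $R_j^T u_j$ genuinely disjoint, but this is guaranteed by the assumption on $\mathcal{T}_h$ stated in Section \ref{Section2}.
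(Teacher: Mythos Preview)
Your proposal is correct and follows exactly the paper's approach: the paper's proof is the single sentence that $V_i$ and $V_j$ are $a$-orthogonal for $i\neq j$, hence $\mu(\epsilon)=1$. You have simply unpacked why this orthogonality holds (disjoint supports of gradients due to the non-overlapping, mesh-aligned partition and the zero-trace condition on $V_i$), which is entirely appropriate.
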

\begin{proof} 
  In our method, $V_i$ and $V_j$ are orthogonal for $i,j=1,\cdots,N$ and $i\not=j$, therefore $\mu(\epsilon)=1$.
\end{proof}

 \begin{lemma} (Assumption iii) We have 
    $$a(R_i^Tu_i,R_i^Tu_i)\leq \omega_i {a}_i(u_i,u_i) \hspace{20pt} \forall u_i\in V_i, 0\leq i\leq N.
    $$
\end{lemma}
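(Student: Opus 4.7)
The plan is essentially to observe that Assumption~iii reduces to an identity by construction, so $\omega_i = 1$ can be taken for every $0 \le i \le N$ and no real estimate is needed. The key point is that in this section we are working with the \emph{exact} local and coarse solvers, where the bilinear forms $a_i$ and $a_0$ were defined precisely as the pullbacks of $a(\cdot,\cdot)$ through the extension operators $R_i^T$ and $R_0^T$.

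For the local spaces $V_i$ with $1 \le i \le N$, I would simply recall the definition from \Cref{solver}, namely $a_i(u_i,v_i) = v_i^T A_i u_i = a(R_i^T u_i, R_i^T v_i)$, where $A_i = R_i A R_i^T$. Setting $v_i = u_i$ gives $a(R_i^T u_i, R_i^T u_i) = a_i(u_i,u_i)$, so the inequality holds with $\omega_i = 1$. For the coarse space $V_0$, the exact coarse bilinear form for NOSAS was defined as $a_0(u_\Gamma, v_\Gamma) = a(R_0^T u_\Gamma, R_0^T v_\Gamma)$ with $R_0^T$ given by the NOSAS extension built from the eigenvector data $\{P^{(i)}, Q^{(i)}\}$. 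Again taking $v_\Gamma = u_\Gamma$ immediately yields $a(R_0^T u_\Gamma, R_0^T u_\Gamma) = a_0(u_\Gamma, u_\Gamma)$, so $\omega_0 = 1$ as well.

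There is no real obstacle in this lemma: the entire content is that we use exact subdomain and coarse solvers rather than any approximate bilinear form. I would point out explicitly that this identity is the reason the only nontrivial quantity in the condition number estimate for the exact NOSAS preconditioner is $C_0^2 = 2 + 3/\lambda_{\min}(\eta)$ coming from Assumption~i, together with $\mu(\epsilon) = 1$ from Assumption~ii; combined with the standard abstract ASM framework of \cite[Chapter~2]{MR2104179}, this then delivers $\kappa(T_A) \le C_0^2 \, \mu(\epsilon) \, \max_i \omega_i = 2 + 3/\lambda_{\min}(\eta)$. I would also flag that the situation will be genuinely different in \Cref{Section4}, where an inexact bilinear form $\hat{a}_0$ is used and a nontrivial $\omega_0 > 1$ must be estimated; here, however, the statement is a one-line definitional check.
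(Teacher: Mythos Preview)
Your proposal is correct and matches the paper's own proof, which simply observes that equality holds with $\omega_i = 1$ for all $0 \le i \le N$ directly from the definitions of the exact bilinear forms $a_i(\cdot,\cdot)$ and $a_0(\cdot,\cdot)$. Your additional commentary on the contrast with the inexact case in \Cref{Section4} is accurate, though note that the abstract ASM upper bound used in the paper is $\max_i \omega_i(\mu(\epsilon)+1)$ rather than $C_0^2\mu(\epsilon)\max_i\omega_i$, so the resulting condition number in \Cref{theo2} is $2\bigl(2+3/\lambda_{\min}(\eta)\bigr)$.
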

\begin{proof}
  We have equality with $\omega_i = 1$  for $0\leq i \leq N$, from
  the definition of the $a_i(\cdot,\cdot)$. 
\end{proof} 

\begin{theorem} \label{theo2} For any $u\in V_h(\Omega)$, the following holds:
  $$ (2+\frac{3}{\lambda_{min}(\eta)})^{-1} a(u,u)\leq a(T_Au,u) \leq 2a(u,u),$$
where $T_A$  was defined in \cref{Def_T_A} and $\lambda_{min}(\eta)$ defined in previous page.
 \end{theorem}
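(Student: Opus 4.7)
The plan is to derive this as a direct consequence of the abstract additive Schwarz framework, since the three preceding lemmas have already verified the three standard assumptions with explicit constants. I would begin by citing the abstract bound from \cite[Chapter 2]{MR2104179}, which says that if Assumptions i, ii, iii hold with constants $C_0^2$, $\mu(\epsilon)$, and $\omega = \max_{0\le i\le N}\omega_i$, then for all $u \in V_h(\Omega)$,
\[
C_0^{-2}\, a(u,u) \le a(T_A u, u) \le \omega\bigl(\mu(\epsilon)+1\bigr)\, a(u,u).
\]

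Next I would plug in the constants that we have just computed. From the assumption-iii lemma, $\omega_i = 1$ for all $i$, so $\omega = 1$. From the assumption-ii lemma (orthogonality of the local spaces), $\mu(\epsilon) = 1$. From the assumption-i lemma, $C_0^2 = 2 + \tfrac{3}{\lambda_{\min}(\eta)}$. Substituting directly yields the upper bound $a(T_A u,u) \le 1\cdot(1+1)\,a(u,u) = 2\,a(u,u)$ and the lower bound $\bigl(2 + \tfrac{3}{\lambda_{\min}(\eta)}\bigr)^{-1} a(u,u) \le a(T_A u,u)$, which is exactly the claim.

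If I wanted a self-contained argument rather than a reference, I would reproduce the short proof of the abstract bound: for the upper bound, write $a(T_A u, u) = \sum_i a(T_i u, u)$, use Cauchy-Schwarz together with Assumption ii and Assumption iii to get $a(T_A u, u) \le \omega(\mu(\epsilon)+1)\, a(u,u)$; for the lower bound, for any admissible decomposition $u = R_0^T u_0 + \sum_{i\ge 1} R_i^T u_i$, use the definition of $T_i$ and Cauchy-Schwarz in the $a_i$-inner product to get $a(u,u)^2 \le C_0^2\, a(u,u)\, a(T_A u, u)$, then cancel. The only slightly delicate point is recognizing that Assumption i was verified for the specific (unique) decomposition $u_0 = u_\Gamma$ and $R_i^T u_i = u - R_0^T u_\Gamma$ restricted to $\Omega_i$, but this is enough because in the abstract argument one only needs the \emph{existence} of some decomposition with the stable bound.

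There is really no substantial obstacle here: the work has all been done in Lemmas on Assumptions i--iii, and Theorem~\ref{theo2} is essentially a bookkeeping statement that packages those constants into the standard additive Schwarz condition-number estimate.
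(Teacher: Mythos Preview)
Your proposal is correct and follows exactly the same route as the paper: the paper's proof simply invokes the abstract additive Schwarz bound from \cite[Chapter~2]{MR2104179}, noting that the lower bound is $C_0^{-2}$ and the upper bound is $\max_i\{\omega_i\}(\mu(\epsilon)+1)$, and then substitutes the constants from the three preceding lemmas. Your optional self-contained sketch goes a bit beyond what the paper writes, but the core argument is identical.
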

\begin{proof}
Using the general framework of additive Schwarz methods.  The lower bound is given by $C_0^{-2}$ and the upper bound by
  $\displaystyle{\max_{1\leq i\leq N}} \{\omega_i\} (\mu(\epsilon) + 1)$. For details see \cite[Chapter 2]{MR2104179}.
\end{proof}

 \Cref{theo2} shows an estimate for the condition number of NOSAS. We can choose $\eta=O(h/H)$ to guarantee the condition number is $O(H/h)$.  We also note that if we choose one eigenvalue in each subdomain, the
  NOSAS is better than MES. On the floating subdomain, both methods
  are equivalent by choosing the best constant function extension
  in the interior nodes of the subdomain. On the subdomain that touches
  the Dirichlet boundary $\partial \Omega$, those two methods differ, while
  in MES selects the best constant function extension in the interior nodes,
  NOSAS find the best one-dimensional function extension by solving
  a generalized eigenvalue problem.

\section{COMPLEXITY OF THE COARSE PROBLEM AND NOSAS WITH INEXACT SOLVER}\label{Section4}
\subsection{Implementation and complexity of NOSAS}\hspace*{\fill} 

The solution $w_\Gamma = \tilde{T}_0 u_h$ of the coarse problem
\begin{equation*}
    {a}_0(\tilde{T}_0u_h,v_{\Gamma})=a(u_h,{R}_0^Tv_{\Gamma}) = ({R}_0^Tv_{\Gamma})^T b \quad\quad \forall v_\Gamma\in V_0,
\end{equation*} 
is of the form:
\begin{equation*}
\small
   \!\!\sum_{i=1}^N\!\!R_{\Gamma_{\!i}\Gamma}^T\!\big(\!A^{(i)}_{\Gamma \Gamma }\!-\!A^{(i)}_{\Gamma\Gamma}\!Q^{(i)}\!\!D^{(i)}\!(\!Q^{(i)^T}\!\!\!\!A^{(i)}_{\Gamma\Gamma}Q^{(i)}\!)^{\!-1}\!Q^{(i)^T}\!\!\!A^{(i)}_{\Gamma\Gamma}\!\big)\!R_{\Gamma_{\!i}\Gamma}w_\Gamma\!\!=\!\!\!\sum_{i=1}^N\!\!R_{\Gamma_{\!i}\Gamma}^T\!\big(\! b_\Gamma^{(i)}\!+\!A^{(i)}_{\Gamma \Gamma}\!Q^{(i)}\!(\!Q^{(i)^T}\!\!\!A^{(i)}_{\Gamma \Gamma}\!Q^{(i)}\!)^{\!-1}\!P^{(i)^T}\!b_I^{(i)}\!\big).
\end{equation*}
By summing all local matrices, let $\displaystyle{
    \sum_{i=1}^N}R_{\Gamma_i\Gamma}^{T}A^{(i)}_{\Gamma\Gamma }R_{\Gamma_i\Gamma}\!=\!A_{\Gamma\Gamma }$, $\displaystyle{\sum_{i=1}^N}R^T_{\Gamma_i\Gamma}A^{(i)}_{\Gamma\Gamma }Q^{(i)}R_{\lambda_i}\!=\!U$,\\
 $\displaystyle{\sum_{i=1}^N}R_{\lambda_i}^{T}D^{(i)}R_{\lambda_i}\!=\!D$,\quad$\displaystyle{\sum_{i=1}^N}R_{\lambda_i}^{T}(Q^{(i)^T}\!A^{(i)}_{\Gamma\Gamma}Q^{(i)})^{-1}R_{\lambda_i}\!=\!C$,\quad and $\displaystyle{P=\sum_{i=1}^N}R_{I_iI}^{T}P^{(i)}R_{\lambda_i}$.\\
Here $R_{\lambda_i}$ is the restriction choosing $[{u}_{i1}\!,\!\cdots\!,\!{u}_{ik_i}]^T$ from $\vec{u}=[{u}_{11}\!,\!\cdots\!,\!{u}_{1k_1}\!,\!\cdots\!,\!{u}_{Nk_1}\!,\!\cdots\!,\!{u}_{Nk_N}\!]^T$\!\!, $k_i$ is the number of eigenfunctions chosen from the i-th subdomain, and $\vec{u}$ have dimension  $N_E$, the number of all eigenvectors we chosen from all N subdomains.

Then we can rewrite the coarse problem into global matrices: 
\begin{equation*}
    (A_{\Gamma\Gamma }-UDCU^T)w_\Gamma=b_\Gamma+UCP^Tb_I,
\end{equation*}
and we use Woodbury matrix identity for implementation:
\begin{equation*}
    (A_{\Gamma\Gamma }-UDCU^T)^{-1}=A_{\Gamma\Gamma }^{-1}+A_{\Gamma\Gamma }^{-1}U(C^{-1}D^{-1}-U^TA_{\Gamma\Gamma }^{-1}U)^{-1}U^TA_{\Gamma\Gamma }^{-1}.
\end{equation*}
 Note that $C^{-1}$, $D^{-1}$ are diagonal matrices, then the complexity of the method is associated with  $A_{\Gamma\Gamma }^{-1}$ and the $N_E \times N_E$ matrix $(C^{-1}D^{-1}-U^TA_{\Gamma\Gamma }^{-1}U)^{-1}$.

The motivation to simplify the coarse problem is to make 
 $A_{\Gamma\Gamma }$ to be block diagonal or diagonal matrix, so the only global component of the coarse problem is only associated with the $N_E \times N_E$ matrix $(C^{-1}D^{-1}-U^TA_{\Gamma\Gamma }^{-1}U)^{-1}$. If in each subdomain, we replace the exact ${A}^{(i)}_{\Gamma\Gamma}$ on the right-hand side of the generalized eigenvalue problem by $\hat{A}^{(i)}_{\Gamma\Gamma}$, where $\hat{A}^{(i)}_{\Gamma\Gamma}$ is the block diagonal or diagonal version of ${A}^{(i)}_{\Gamma\Gamma}$, the global assembling matrix  $\hat{A}_{\Gamma\Gamma }$ will be block diagonal or diagonal respectively. For the block diagonal case, we eliminate the connections across different faces, edges, and corners of the subdomain. For the diagonal case, we eliminate the connections across different vertices. These inexact cases can be analyzed and given in the following subsection.

\subsection{New method: non-overlapping spectral Schwarz method with inexact solver(NOSAS)}\hspace*{\fill} 

In the inexact coarse solver, the idea and the definitions are similar to the exact solver; we use the same decomposition and the same local solver as before. The only change is in each subdomain; we introduce the following local generalized eigenvalue problem with the block diagonal or diagonal version of ${A}^{(i)}_{\Gamma\Gamma}$ as $\hat{A}^{(i)}_{\Gamma\Gamma}$. The block diagonal of ${A}^{(i)}_{\Gamma\Gamma}$ obtained by eliminating the value between different faces, edges, and corners of the subdomain. 
The diagonal of ${A}^{(i)}_{\Gamma\Gamma}$ is obtained by eliminating the value between different vertices:
\begin{equation}\label{eigeninexact} 
 S^{(i)} \hat{\xi}^{(i)}_j := (A^{(i)}_{\Gamma\Gamma}-A^{(i)}_{\Gamma I}(A^{(i)}_{II})^{-1}A^{(i)}_{I \Gamma })\hat{\xi}_j^{(i)}=\hat{\lambda}_j^{(i)} \hat{A}^{(i)}_{\Gamma\Gamma}\hat{\xi}_j^{(i)},
\hspace{30pt} (j=1,\cdots, n_i)\end{equation}
where $n_i$ is the degrees of freedom on $\Gamma_i$, and $0\leq\hat{\lambda}_1^{(i)} \leq \cdots \leq \hat{\lambda}_{n_i}^{(i)} $. We note that for this case we do not have necessarily $\hat{\lambda}_j^{(i)} \leq 1$ for all $j$ and $i$.  We choose the smallest $k_i$ eigenvalues which are less than the threshold $\eta=O(h/H)$ and denote $\hat{Q}^{(i)}=[\hat{\xi}^{(i)}_1,\hat{\xi}^{(i)}_2,\cdots, \hat{\xi}_{k_i}^{(i)}]$,  $\hat{P}^{(i)}=-(A_{II}^{(i)})^{-1}A^{(i)}_{I \Gamma }\hat{Q}^{(i)}$, and similar as before, $
\hat{D}^{(i)}= \text{diagonal}(1-\hat{\lambda}^{(i)}_1,1-\hat{\lambda}^{(i)}_2,\cdots,1-\hat{\lambda}^{(i)}_{k_i})$.
 
 Define the global extension $\hat{R}_0^T:V_0\to V_h(\Omega)$ as:
\begin{equation*}
    \hat{R}_0^Tu_\Gamma\!:=\!\!\begin{bmatrix} u_\Gamma  \\ 
    \displaystyle{\sum_{i=1}^N} R_{I_iI}^{T}\hat{P}^{(i)}\!(\hat{Q}^{(i)^T}\!\!\hat{A}^{(i)}_{\Gamma\Gamma}\hat{Q}^{(i)})^{-1}\hat{Q}^{(i)^T}\!\!\hat{A}_{\Gamma\Gamma}^{(i)}R_{\Gamma_i\Gamma}u_\Gamma\\
        \end{bmatrix}.
\end{equation*}

Next we define the inexact coarse bilinear form as:
\begin{equation*}
    \hat{a}_0(u_{\Gamma},v_{\Gamma})=v_{\Gamma}^T\!\sum_{i=1}^NR_{\Gamma_i\Gamma}^{T}\big(\hat{A}^{(i)}_{\Gamma \Gamma }\!-\!\hat{A}^{(i)}_{\Gamma\Gamma}\hat{Q}^{(i)}\hat{D}^{(i)}(\hat{Q}^{(i)^T}\!\!\hat{A}^{(i)}_{\Gamma\Gamma}\hat{Q}^{(i)})^{-1}\hat{Q}^{(i)^T}\!\!\hat{A}^{(i)}_{\Gamma\Gamma}\big)R_{\Gamma_i\Gamma}u_{\Gamma} \hspace{10pt} \forall u_{\Gamma},v_{\Gamma}\in V_0.
\end{equation*}

We note $\hat{a}_0(u_{\Gamma},v_{\Gamma}) \neq a(\hat{R}_0^Tu_{\Gamma}, \hat{R}_0^Tv_{\Gamma})$ for $u_{\Gamma},v_{\Gamma} \in V_0$; see \Cref{theo3} and  \Cref{bound1} below. Similarly, the coarse solution  $w_\Gamma = \hat{T}_0 u_h$  by the coarse problem \cref{localpb2} can be obtained in matrix form by:
\begin{equation*}
\small
   \sum_{i=1}^N\!\!R_{\Gamma_{\!i}\Gamma}^{T}\!\big(\!\hat{A}^{(i)}_{\Gamma \Gamma }\!-\!\hat{A}^{(i)}_{\Gamma\Gamma}\hat{Q}^{(i)}\!\!\hat{D}^{(i)}\!(\hat{Q}^{\!(i)^{\!T}}\!\!\!\hat{A}^{(i)}_{\Gamma\Gamma}\hat{Q}^{(i)}\!)^{\!-1}\!\hat{Q}^{\!(i)^{\!T}}\!\!\!\hat{A}^{(i)}_{\Gamma\Gamma}\!\big)\!R_{\Gamma_{\!i}\Gamma}\!w_\Gamma\!\!=\!\!\!\sum_{i=1}^N\!\!R_{\Gamma_{\!i}\Gamma}^{T}\!\big(\! b_\Gamma^{(i)}\!\!+\!\hat{A}^{(i)}_{\Gamma \Gamma}\hat{Q}^{(i)}\!(\!\hat{Q}^{\!(i)^{\!T}}\!\!\hat{A}^{(i)}_{\Gamma \Gamma}\hat{Q}^{\!(i)})^{\!-1}\!\hat{P}^{(i)^T}\!\!b_I^{(i)}\!\big)\!.
\end{equation*}
Also, if we consider the local property, we will have similar theorem below:
\begin{theorem} \label{theo3} 
  Let $\hat{\Pi}^{(i)}_Su_{\Gamma_i}$ be the projection of $u_{\Gamma_i}\in V_h(\Gamma_i)$ onto the eigenfunctions space , Span$\{\hat{Q}^{(i)}\}$. That
  is, $\hat{\Pi}^{(i)}_Su_{\Gamma_i}:= \hat{Q}^{(i)}(\hat{Q}^{(i)^T}\hat{A}^{(i)}_{\Gamma\Gamma}\hat{Q}^{(i)})^{-1}\hat{Q}^{(i)^{T}}\hat{A}^{(i)}_{\Gamma\Gamma}u_{\Gamma_i}$. Let define the local bilinear form for $i=1,\cdots,N$:
  \begin{equation*}
      \hat{a}^{(i)}_0(u_{\Gamma_i},v_{\Gamma_i})= v_{\Gamma_i}^T(\hat{A}^{(i)}_{\Gamma \Gamma }\!-\!\hat{A}^{(i)}_{\Gamma\Gamma}\hat{Q}^{(i)}\hat{D}^{(i)}(\hat{Q}^{(i)^T}\!\!\hat{A}^{(i)}_{\Gamma\Gamma}\hat{Q}^{(i)})^{-1}\hat{Q}^{(i)^T}\!\!\hat{A}^{(i)}_{\Gamma\Gamma}\big)u_{\Gamma_i},
  \end{equation*} 
  where $u_{\Gamma_i},v_{\Gamma_i}\in V_h(\Gamma_i)$. Then, we have
\begin{equation*}
    \hat{a}^{(i)}_0(u_{\Gamma_i},v_{\Gamma_i})= (\hat{\Pi}_S^{(i)}v_{\Gamma_i})^T S^{(i)} (\hat{\Pi}_S^{(i)}u_{\Gamma_i})+ (v_{\Gamma_i} - \hat{\Pi}_S^{(i)}v_{\Gamma_i})^T
    \hat{A}^{(i)}_{\Gamma \Gamma} (u_{\Gamma_i}- \hat{\Pi}_S^{(i)}u_{\Gamma_i}).
    \end{equation*}
\end{theorem}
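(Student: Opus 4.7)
The plan is to mimic the proof of \Cref{theo1} essentially verbatim, replacing $A^{(i)}_{\Gamma\Gamma}$ on the right-hand side of the generalized eigenvalue problem and in all orthogonality arguments by $\hat{A}^{(i)}_{\Gamma\Gamma}$. The key structural fact is that $\hat{\Pi}_S^{(i)}$ is the $\hat{A}^{(i)}_{\Gamma\Gamma}$-orthogonal projection onto $\mathrm{Range}(\hat{Q}^{(i)})$, exactly analogous to how $\Pi_S^{(i)}$ was the $A^{(i)}_{\Gamma\Gamma}$-orthogonal projection onto $\mathrm{Range}(Q^{(i)})$ in the exact case.

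First I would split $u_{\Gamma_i} = u_1 + u_2$ and $v_{\Gamma_i} = v_1 + v_2$ with $u_1 = \hat{\Pi}_S^{(i)} u_{\Gamma_i}$, $u_2 = u_{\Gamma_i} - u_1$, and likewise for $v$. From the definition of $\hat{\Pi}_S^{(i)}$ one reads off directly that $\hat{Q}^{(i)^T} \hat{A}^{(i)}_{\Gamma\Gamma} u_2 = 0$ and $\hat{Q}^{(i)^T} \hat{A}^{(i)}_{\Gamma\Gamma} v_2 = 0$, which immediately gives the two cross-term cancellations $v_1^T \hat{A}^{(i)}_{\Gamma\Gamma} u_2 = 0$ and $v_2^T \hat{A}^{(i)}_{\Gamma\Gamma} u_1 = 0$, as well as killing the cross terms of the more complicated second operator $\hat{A}^{(i)}_{\Gamma\Gamma}\hat{Q}^{(i)} \hat{D}^{(i)} (\hat{Q}^{(i)^T}\hat{A}^{(i)}_{\Gamma\Gamma}\hat{Q}^{(i)})^{-1}\hat{Q}^{(i)^T}\hat{A}^{(i)}_{\Gamma\Gamma}$ whenever either slot contains $u_2$ or $v_2$. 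Therefore $\hat{a}^{(i)}_0(u_{\Gamma_i},v_{\Gamma_i})$ reduces to the sum of the $(v_1,u_1)$ piece of the full bilinear form and the bare $v_2^T \hat{A}^{(i)}_{\Gamma\Gamma} u_2$ piece.

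The $(v_2, u_2)$ piece already has the desired form, so the remaining task is to recognize the $(v_1, u_1)$ piece as $v_1^T S^{(i)} u_1$. For this I would take an arbitrary $\xi \in \mathrm{Range}(\hat{Q}^{(i)})$, write $\xi = \hat{Q}^{(i)} c$, and compute
\[
\hat{A}^{(i)}_{\Gamma\Gamma}\hat{Q}^{(i)} \hat{D}^{(i)} (\hat{Q}^{(i)^T}\hat{A}^{(i)}_{\Gamma\Gamma}\hat{Q}^{(i)})^{-1}\hat{Q}^{(i)^T}\hat{A}^{(i)}_{\Gamma\Gamma} \xi = \hat{A}^{(i)}_{\Gamma\Gamma}\hat{Q}^{(i)} \hat{D}^{(i)} c.
\]
Since $\hat{D}^{(i)} = \mathrm{diag}(1-\hat{\lambda}_j^{(i)})$ and each column $\hat{\xi}_j^{(i)}$ satisfies $S^{(i)} \hat{\xi}_j^{(i)} = \hat{\lambda}_j^{(i)} \hat{A}^{(i)}_{\Gamma\Gamma}\hat{\xi}_j^{(i)}$ by \cref{eigeninexact}, column-by-column we get $\hat{A}^{(i)}_{\Gamma\Gamma}\hat{Q}^{(i)} \hat{D}^{(i)} c = (\hat{A}^{(i)}_{\Gamma\Gamma} - S^{(i)}) \hat{Q}^{(i)} c = (\hat{A}^{(i)}_{\Gamma\Gamma} - S^{(i)})\xi$. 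Substituting this back into the bilinear form shows that the operator in parentheses defining $\hat{a}_0^{(i)}$ acts on Range$(\hat{Q}^{(i)})$ as $S^{(i)}$, so the $(v_1,u_1)$ piece equals $v_1^T S^{(i)} u_1 = (\hat{\Pi}_S^{(i)}v_{\Gamma_i})^T S^{(i)}(\hat{\Pi}_S^{(i)} u_{\Gamma_i})$.

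The only mild obstacle, compared to \Cref{theo1}, is that the generalized eigenvalue problem for the inexact case uses $\hat{A}^{(i)}_{\Gamma\Gamma}$ on the right while the bilinear form still involves $S^{(i)} = A^{(i)}_{\Gamma\Gamma} - A^{(i)}_{\Gamma I}(A^{(i)}_{II})^{-1}A^{(i)}_{I\Gamma}$ (the unperturbed Schur complement) on the left; one has to check that the identity $S^{(i)} \hat{Q}^{(i)} = \hat{A}^{(i)}_{\Gamma\Gamma} \hat{Q}^{(i)} (I - \hat{D}^{(i)})$ together with $\hat{A}^{(i)}_{\Gamma\Gamma}$-orthogonality is still what is needed, and indeed it is, because the identity $\hat{A}^{(i)}_{\Gamma\Gamma} - (\hat{A}^{(i)}_{\Gamma\Gamma} - S^{(i)}) = S^{(i)}$ on $\mathrm{Range}(\hat{Q}^{(i)})$ is purely algebraic and independent of whether the eigenvalues satisfy $\hat{\lambda}_j^{(i)} \le 1$. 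Assembling the three surviving pieces yields the claimed decomposition.
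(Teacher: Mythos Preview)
Your proposal is correct and follows essentially the same approach as the paper's own proof: the same $\hat{A}^{(i)}_{\Gamma\Gamma}$-orthogonal decomposition $u_{\Gamma_i}=u_1+u_2$, the same cross-term cancellations, and the same identification of the operator with $S^{(i)}$ on $\mathrm{Range}(\hat{Q}^{(i)})$. If anything, your column-by-column computation $\xi=\hat{Q}^{(i)}c$ is slightly more explicit than the paper's, which states the identity for a single eigenvector and then passes directly to general $u_1,v_1$.
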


\begin{proof}
Let us denote $u_{\Gamma_i}=u_1+u_2$, $v_{\Gamma_i}=v_1+v_2$, $u_{\Gamma_i},v_{\Gamma_i}\in V_h(\Gamma_i)$. And $u_1 = \hat{\Pi}^{(i)}_Su_{\Gamma_i}$, $u_2 = u_{\Gamma_i} -\hat{\Pi}^{(i)}_Su_{\Gamma_i}$, $v_1 = \hat{\Pi}^{(i)}_Sv_{\Gamma_i}$, $v_2 = v_{\Gamma_i} -\hat{\Pi}^{(i)}_Sv_{\Gamma_i}$. Then  
    \begin{equation*}
    \hat{a}^{(i)}_0(u_{\Gamma_i},v_{\Gamma_i})=(v_1+v_2)^T(\hat{A}^{(i)}_{\Gamma \Gamma }- \hat{A}^{(i)}_{\Gamma\Gamma}\hat{Q}^{(i)}\hat{D}^{(i)}(\hat{Q}^{(i)^T}\hat{A}^{(i)}_{\Gamma \Gamma }\hat{Q}^{(i)})^{-1}\hat{Q}^{(i)^T}\hat{A}^{(i)}_{\Gamma \Gamma })(u_1+u_2).
\end{equation*}
    We note that $v_1^T\hat{A}^{(i)}_{\Gamma \Gamma }u_2=0$, $v_2^T\hat{A}^{(i)}_{\Gamma \Gamma }u_1=0$ and $\hat{Q}^{(i)^T}\hat{A}^{(i)}_{\Gamma \Gamma }u_2=0$. Thus,
\begin{equation*}
    \hat{a}^{(i)}_0     (u,v)=v_1^T(\hat{A}^{(i)}_{\Gamma \Gamma }-\hat{D}^{(i)}\hat{A}^{(i)}_{\Gamma \Gamma }\hat{Q}^{(i)}(\hat{Q}^{(i)^T}\hat{A}^{(i)}_{\Gamma \Gamma }\hat{Q}^{(i)})^{-1}\hat{Q}^{(i)^T}\hat{A}^{(i)}_{\Gamma \Gamma })u_1+v_2^T\hat{A}^{(i)}_{\Gamma \Gamma }u_2.
\end{equation*}
For any $\hat{\xi} \in\text{Span}(\hat{Q^{(i)}})$,
$$\hat{\xi}^T\hat{A}^{(i)}_{\Gamma \Gamma }\hat{Q}^{(i)}\hat{D}^{(i)}(\hat{Q}^{(i)^T}\hat{A}^{(i)}_{\Gamma \Gamma }\hat{Q}^{(i)})^{-1}\hat{Q}^{(i)^T}\hat{A}^{(i)}_{\Gamma \Gamma }\hat{\xi}=(1-\hat{\lambda}_i)\hat{\xi}^T\hat{A}^{(i)}_{\Gamma \Gamma}\hat{\xi},$$ and 
$$\hat{\xi}^T(\hat{A}^{(i)}_{\Gamma \Gamma }-\hat{A}^{(i)}_{\Gamma \Gamma }\hat{Q}^{(i)}\hat{D}^{(i)}(\hat{Q}^{(i)^T}\hat{A}^{(i)}_{\Gamma \Gamma }\hat{Q}^{(i)})^{-1}\hat{Q}^{(i)^T}\hat{A}^{(i)}_{\Gamma \Gamma })\hat{\xi}=\hat{\lambda}_i\hat{\xi}^T\hat{A}^{(i)}_{\Gamma \Gamma}\hat{\xi}=\hat{\xi}^TS^{(i)}\hat{\xi},$$ and therefore, 
$$v_1^T(\hat{A}^{(i)}_{\Gamma \Gamma }-\hat{A}^{(i)}_{\Gamma \Gamma }\hat{Q}^{(i)}\hat{D}^{(i)}(\hat{Q}^{(i)^T}\hat{A}^{(i)}_{\Gamma \Gamma }\hat{Q}^{(i)})^{-1}\hat{Q}^{(i)^T}\hat{A}^{(i)}_{\Gamma \Gamma })u_1=v_1^TSu_1.$$
Hence,  
\begin{equation*}
  \hat{a}^{(i)}_0(u_{\Gamma_i},v_{\Gamma_i}) =v_1^TS^{(i)}u_1+v_2^T\hat{A}^{(i)}_{\Gamma \Gamma }u_2. 
\end{equation*}
\end{proof}
Now let us consider the local extension $\hat{R}_0^{(i)^T}: V_h(\Gamma_i)\to V_h(\Omega_i)$ defined by :
\begin{equation*}
    \hat{R}_0^{(i)^T}u_{\Gamma_i}=\begin{bmatrix} u_{\Gamma_i}  \\  \hat{P}^{(i)}\!(\hat{Q}^{(i)^T}\!\!\hat{A}^{(i)}_{\Gamma\Gamma}\hat{Q}^{(i)})^{-1}\hat{Q}^{(i)^T}\!\!\hat{A}_{\Gamma\Gamma}^{(i)}u_{\Gamma_i}
    \end{bmatrix}
\end{equation*} 
in the sense of local bilinear form \begin{equation*}
    a^{(i)}(u^{(i)},v^{(i)})=v^{(i)^T}\begin{bmatrix}
  A^{(i)}_{\Gamma\Gamma}&A^{(i)}_{\Gamma I}\\
  A^{(i)}_{I\Gamma}&A^{(i)}_{II}
\end{bmatrix}u^{(i)},
\end{equation*}
where $u^{(i)},v^{(i)}\in V_h(\Omega_i).$

\begin{lemma} \label{bound1} For $u_{\Gamma_i},v_{\Gamma_i} \in V_h(\Gamma_i)$ holds
  \[
 {a}^{(i)}(\hat{R}_0^{(i)^T} u_{\Gamma_i}, \hat{R}_0^{(i)^T} v_{\Gamma_i}) = 
 (\hat{\Pi}^{(i)}_Sv_{\Gamma_i})^T S^{(i)} (\hat{\Pi}^{(i)}_S u_{\Gamma_i})^T + ( v_{\Gamma_i} - \hat{\Pi}^{(i)}_S v_{\Gamma_i})^T
  A^{(i)}_{\Gamma \Gamma}  ( u_{\Gamma_i} - \hat{\Pi}^{(i)}_S u_{\Gamma_i}).
  \]
\end{lemma}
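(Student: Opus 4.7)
The plan is to write $\hat{R}_0^{(i)^T} u_{\Gamma_i}$ as the sum of two pieces with disjoint support structure and then expand the bilinear form. Set $u_1 = \hat{\Pi}^{(i)}_S u_{\Gamma_i}$ and $u_2 = u_{\Gamma_i} - u_1$ (and similarly $v_1,v_2$), and observe that since $\hat{P}^{(i)} = -(A^{(i)}_{II})^{-1} A^{(i)}_{I\Gamma}\hat{Q}^{(i)}$ and $u_1 \in \mathrm{Span}(\hat{Q}^{(i)})$, the interior part of $\hat{R}_0^{(i)^T} u_{\Gamma_i}$ coincides with the local discrete harmonic extension of $u_1$. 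Consequently one can decompose
\begin{equation*}
  \hat{R}_0^{(i)^T} u_{\Gamma_i} \;=\; \mathcal{H}^{(i)}(u_1) \;+\; E_\Gamma u_2,
\end{equation*}
where $E_\Gamma u_2 = [u_2;\,0]^T$ extends $u_2$ by zero into the interior of $\Omega_i$.

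Next I would expand $a^{(i)}(\hat{R}_0^{(i)^T} u_{\Gamma_i}, \hat{R}_0^{(i)^T} v_{\Gamma_i})$ using bilinearity into four terms. The diagonal terms are immediate: the energy of the harmonic extensions gives $a^{(i)}(\mathcal{H}^{(i)} u_1, \mathcal{H}^{(i)} v_1) = v_1^T S^{(i)} u_1$, and a direct block computation gives $a^{(i)}(E_\Gamma u_2, E_\Gamma v_2) = v_2^T A^{(i)}_{\Gamma\Gamma} u_2$. These are exactly the two terms on the right-hand side of the claim.

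The main obstacle is to show that the two cross terms vanish. A direct block-wise calculation yields
\begin{equation*}
  a^{(i)}(\mathcal{H}^{(i)} u_1, E_\Gamma v_2) \;=\; v_2^T\bigl(A^{(i)}_{\Gamma\Gamma} - A^{(i)}_{\Gamma I}(A^{(i)}_{II})^{-1} A^{(i)}_{I\Gamma}\bigr) u_1 \;=\; v_2^T S^{(i)} u_1,
\end{equation*}
and symmetrically for the other cross term. To see this is zero, I would invoke the generalized eigenvalue relation \cref{eigeninexact} in the matrix form $S^{(i)} \hat{Q}^{(i)} = \hat{A}^{(i)}_{\Gamma\Gamma}\hat{Q}^{(i)} \,\mathrm{diag}(\hat{\lambda}^{(i)}_1,\dots,\hat{\lambda}^{(i)}_{k_i})$. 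Writing $u_1 = \hat{Q}^{(i)} c$ for some coefficient vector $c$, the cross term becomes $v_2^T \hat{A}^{(i)}_{\Gamma\Gamma}\hat{Q}^{(i)}\,\mathrm{diag}(\hat{\lambda})\,c$, which vanishes because by definition of $\hat{\Pi}^{(i)}_S$ we have $\hat{Q}^{(i)^T}\hat{A}^{(i)}_{\Gamma\Gamma} v_2 = 0$.

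The whole argument is thus driven by two orthogonalities: the Galerkin orthogonality built into $\hat{\Pi}^{(i)}_S$ (which is $\hat{A}^{(i)}_{\Gamma\Gamma}$-orthogonal onto $\mathrm{Span}(\hat{Q}^{(i)})$), combined with the eigenvalue identity that converts $S^{(i)}$-pairings on the eigenspace into $\hat{A}^{(i)}_{\Gamma\Gamma}$-pairings. Note the result parallels \Cref{theo3} but with the genuine $A^{(i)}_{\Gamma\Gamma}$ instead of $\hat{A}^{(i)}_{\Gamma\Gamma}$ in the second term; this is the essential reason $\hat{a}_0(u_\Gamma,v_\Gamma) \neq a(\hat{R}_0^T u_\Gamma, \hat{R}_0^T v_\Gamma)$ in general, and this lemma pinpoints precisely the discrepancy between the inexact coarse form and the true energy.
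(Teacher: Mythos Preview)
Your proof is correct and follows essentially the same approach as the paper: both use the splitting $u_{\Gamma_i}=u_1+u_2$ with $u_1=\hat{\Pi}^{(i)}_S u_{\Gamma_i}$, recognize that the interior component of $\hat{R}_0^{(i)^T}u_{\Gamma_i}$ is exactly the discrete harmonic extension of $u_1$, expand the bilinear form, and eliminate the cross terms $v_2^TS^{(i)}u_1$ and $v_1^TS^{(i)}u_2$ via the generalized eigenvalue identity combined with the $\hat{A}^{(i)}_{\Gamma\Gamma}$-orthogonality of $u_2,v_2$ to $\mathrm{Span}(\hat{Q}^{(i)})$. The only cosmetic difference is that the paper carries out a direct block-matrix expansion, whereas you phrase the same computation via the decomposition $\hat{R}_0^{(i)^T}u_{\Gamma_i}=\mathcal{H}^{(i)}u_1+E_\Gamma u_2$; your justification of why the cross terms vanish is in fact more explicit than the paper's.
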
 
\begin{proof}
Let us still consider the projection $u_1 = \hat{\Pi}^{(i)}_S u_{\Gamma_i}$ and $u_2 = u_{\Gamma_i} - u_1$ and
$v_1 = \hat{\Pi}^{(i)}_S v_{\Gamma_i}$ and $v_2 = v_{\Gamma_i} - v_1$
  . We have 
\begin{equation*}
\begin{split}
    a^{(i)}(\hat{R}_0^{(i)^T}u_{\Gamma_i},\hat{R}_0^{(i)^T}v_{\Gamma_i})&=\begin{bmatrix}v_1^T+v_2^T ,&  -v_1^TA^{(i)}_{\Gamma I}A^{(i)^{-1}}_{II} 
        \end{bmatrix}\begin{bmatrix} A^{(i)}_{\Gamma\Gamma}  &  A^{(i)}_{\Gamma I}   \\ 
     A^{(i)}_{I \Gamma } & A^{(i)}_{II}\\
        \end{bmatrix}\begin{bmatrix}u_1+u_2 \\  -A^{(i)^{-1}}_{II} A^{(i)}_{I \Gamma }u_1
        \end{bmatrix}\\
        =&v_1^TA^{(i)}_{\Gamma\Gamma}u_1+v_2^TA^{(i)}_{\Gamma\Gamma}u_1+v_1^TA^{(i)}_{\Gamma\Gamma}u_2+v_2^TA^{(i)}_{\Gamma\Gamma}u_2-v_1^TA^{(i)}_{\Gamma I}A_{II}^{(i)^{-1}}A^{(i)}_{I\Gamma}u_1\\
        &-v_2^TA^{(i)}_{\Gamma I}A_{II}^{(i)^{-1}}A^{(i)}_{I\Gamma}u_1-v_1^TA^{(i)}_{\Gamma I}A_{II}^{(i)^{-1}}A^{(i)}_{I\Gamma}u_2\\
        =&v_1^TA^{(i)}_{\Gamma\Gamma}u_1+v_2^TS^{(i)}u_1+v_1^TS^{(i)}u_2+v_2^TA^{(i)}_{\Gamma\Gamma}u_2-v_1^TA^{(i)}_{\Gamma I}A_{II}^{-1}A^{(i)}_{I\Gamma}u_1\\
        =&v_1^TA^{(i)}_{\Gamma\Gamma}u_1+v_2^TA^{(i)}_{\Gamma\Gamma}u_2-v_1^TA^{(i)}_{\Gamma I}A_{II}^{(i)^{-1}}
        A^{(i)}_{I \Gamma}u_1 = v_1^T S^{(i)} u_1 + v_2^T A^{(i)}_{\Gamma\Gamma}u_2.\\
\end{split}
\end{equation*}
\end{proof} 

Now we can prove the condition number for the inexact solver. Denoted  $\hat{\lambda}_{min}(\eta)=\displaystyle{\min_{1\leq i\leq N}}\{\hat{\lambda}_{k_i+1}^{(i)} \}$, the smallest eigenvalue which greater than $\eta$ for all subdomains. Then we check the following three key assumptions.
\begin{lemma} (Assumption i) For $u\in V_h(\Omega)$, there exist $u_i\in V_i$ for  $0\leq i\leq N$, such that $u=\hat{R}_0^Tu_0+ \sum_{i=1}^N R_{i}^{T}u_i$ and satisfies
  \[ \hat{a}_{0}(u_0,u_0) + \sum_{i=1}^N {a}_i(u_i,u_i)\leq
 (2 + 7\max\{1,\frac{1}{\hat{\lambda}_{min}(\eta)}\}) a(u,u).\]
\end{lemma}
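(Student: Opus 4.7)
The plan is to reuse the template of the exact-solver Assumption~i while carefully accounting for the fact that, unlike in the exact case, $\hat{a}_0(u_0,u_0)\neq a(\hat{R}_0^T u_0,\hat{R}_0^T u_0)$. First I would set $u_0=u_\Gamma$ (the trace of $u$ on $\Gamma$) and let $u_i\in V_i$, $i=1,\dots,N$, be the unique components determined by $\sum_{i=1}^N R_i^T u_i = u-\hat{R}_0^T u_0$; this is well-defined because $\hat{R}_0^T u_0$ reproduces $u$ on $\Gamma$, so the difference lies in $\bigoplus_{i=1}^N V_i$. Since these interior pieces are mutually $a$-orthogonal,
\[
\sum_{i=1}^N a_i(u_i,u_i) \;=\; a\!\left(u-\hat{R}_0^T u_0,\, u-\hat{R}_0^T u_0\right) \;\leq\; 2\,a(u,u) + 2\,a(\hat{R}_0^T u_0,\hat{R}_0^T u_0),
\]
and the task reduces to showing $\hat{a}_0(u_0,u_0) + 2\,a(u,u) + 2\,a(\hat{R}_0^T u_0,\hat{R}_0^T u_0) \leq (2+7\max\{1,1/\hat{\lambda}_{min}(\eta)\})\,a(u,u)$.

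Next, I would split the two quadratic quantities subdomain by subdomain using \Cref{theo3} for $\hat{a}_0^{(i)}$ and \Cref{bound1} for $a^{(i)}(\hat{R}_0^{(i)^T} u_{\Gamma_i},\hat{R}_0^{(i)^T} u_{\Gamma_i})$. Setting $v_i := u_{\Gamma_i}-\hat{\Pi}_S^{(i)} u_{\Gamma_i}$, both identities share a common projection piece $\alpha_i := (\hat{\Pi}_S^{(i)} u_{\Gamma_i})^T S^{(i)}(\hat{\Pi}_S^{(i)} u_{\Gamma_i})$ and differ only in a residual term: $\gamma_i := v_i^T \hat{A}_{\Gamma\Gamma}^{(i)} v_i$ for $\hat{a}_0^{(i)}$ and $\beta_i := v_i^T A_{\Gamma\Gamma}^{(i)} v_i$ for $a^{(i)}(\hat{R}_0^{(i)^T} u_{\Gamma_i},\hat{R}_0^{(i)^T} u_{\Gamma_i})$. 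Expanding $u_{\Gamma_i}$ in the $\hat{A}_{\Gamma\Gamma}^{(i)}$-orthonormal eigenbasis of \cref{eigeninexact}, and exploiting that those eigenvectors are simultaneously $S^{(i)}$-orthogonal, I obtain $\alpha_i \leq u_{\Gamma_i}^T S^{(i)} u_{\Gamma_i}$ at once; and since $v_i$ lies in the span of the eigenvectors with $\hat{\lambda}_j^{(i)}\geq \hat{\lambda}_{k_i+1}^{(i)}$, the generalized eigenvalue problem yields $\gamma_i \leq \hat{\lambda}_{min}(\eta)^{-1}\, u_{\Gamma_i}^T S^{(i)} u_{\Gamma_i}$. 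Summing over $i$ and using $\sum_i u_{\Gamma_i}^T S^{(i)} u_{\Gamma_i} = u_\Gamma^T S u_\Gamma = a(\mathcal{H} u_\Gamma,\mathcal{H} u_\Gamma)\leq a(u,u)$ (minimization of energy by the $a$-discrete harmonic extension) then controls all the $\alpha$- and $\gamma$-type pieces.

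The main obstacle is the term $\beta_i = v_i^T A_{\Gamma\Gamma}^{(i)} v_i$: it involves the true boundary block $A_{\Gamma\Gamma}^{(i)}$ rather than the lumped $\hat{A}_{\Gamma\Gamma}^{(i)}$ that drives \cref{eigeninexact}, so the inexact eigenvalue bound cannot be invoked on it directly. I expect to need a spectral comparison between $A_{\Gamma\Gamma}^{(i)}$ and $\hat{A}_{\Gamma\Gamma}^{(i)}$ inherited from the specific block-diagonal or diagonal lumping construction, of the form $A_{\Gamma\Gamma}^{(i)}\leq c\,\hat{A}_{\Gamma\Gamma}^{(i)}$ (possibly only on $\Range(\hat{Q}^{(i)})^{\perp}$), which would reduce $\beta_i$ to a constant multiple of $\gamma_i$ and hence to the already-controlled $S^{(i)}$-quantity. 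Gathering everything into $2a(u,u)+\sum_i(3\alpha_i+\gamma_i+2\beta_i)$, the $3\alpha$-piece contributes at most $3\,a(u,u)$, while the $\gamma$ and $\beta$ pieces together contribute something of order $4\,\hat{\lambda}_{min}(\eta)^{-1}\,a(u,u)$; since $3+4/\hat{\lambda}_{min}(\eta) \leq 7\max\{1,1/\hat{\lambda}_{min}(\eta)\}$ in both regimes $\hat{\lambda}_{min}(\eta)\leq 1$ and $\hat{\lambda}_{min}(\eta)>1$, this assembly delivers the advertised constant.
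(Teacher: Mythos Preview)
Your proposal follows the paper's proof essentially step by step: the same decomposition $u_0=u_\Gamma$, the same appeal to \Cref{theo3} and \Cref{bound1} to split each subdomain contribution into a projection part $\alpha_i$ and residual parts $\gamma_i$ (with $\hat{A}^{(i)}_{\Gamma\Gamma}$) and $\beta_i$ (with $A^{(i)}_{\Gamma\Gamma}$), and the same anticipated spectral comparison $A^{(i)}_{\Gamma\Gamma}\le c\,\hat{A}^{(i)}_{\Gamma\Gamma}$. The paper supplies exactly this with $c=3$ for a general triangulation (or $c=2$ for right triangles), obtained by an elementwise Cauchy--Schwarz argument, and it holds on all of $V_h(\Gamma_i)$, not merely on $\Range(\hat{Q}^{(i)})^\perp$.

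One bookkeeping point: with $c=3$ you get $\gamma_i+2\beta_i\le 7\gamma_i$, not $4\gamma_i$, and if you then bound $3\alpha_i\le 3\,u_{\Gamma_i}^T S^{(i)}u_{\Gamma_i}$ separately you end up with $3+7/\hat{\lambda}_{min}(\eta)$, which does \emph{not} sit below $7\max\{1,1/\hat{\lambda}_{min}(\eta)\}$ near $\hat{\lambda}_{min}(\eta)=1$. The paper avoids this by not passing to the full Schur quantity prematurely: it keeps $\alpha_i=u_1^{(i)T}S^{(i)}u_1^{(i)}$ and $\gamma_i\le\hat{\lambda}_{min}(\eta)^{-1}u_2^{(i)T}S^{(i)}u_2^{(i)}$ and combines them only at the end via
\[
3\,u_1^{(i)T}S^{(i)}u_1^{(i)}+7\gamma_i\;\le\;7\max\Bigl\{1,\tfrac{1}{\hat{\lambda}_{min}(\eta)}\Bigr\}\bigl(u_1^{(i)T}S^{(i)}u_1^{(i)}+u_2^{(i)T}S^{(i)}u_2^{(i)}\bigr)=7\max\Bigl\{1,\tfrac{1}{\hat{\lambda}_{min}(\eta)}\Bigr\}\,u_{\Gamma_i}^T S^{(i)}u_{\Gamma_i},
\]
using the $S^{(i)}$-orthogonality of $u_1^{(i)}$ and $u_2^{(i)}$. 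With this small adjustment your argument reproduces the stated constant exactly.
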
  
\begin{proof}
  The decomposition is unique given by $u_0 = u_\Gamma$ and
  the others $u_i$ obtained from
  $\sum_{i=1}^N  R_{i}^{T}u_i = u - \hat{R}_0^T  u_0$. Hence,
    \begin{equation*}
        \begin{split}
            \hat{a}_0(u_0,u_0)\!\! +\!\!\sum_{i=1}^N{a}_i(u_i,u_i)\!\!&=\!\hat{a}_0(u_0,u_0) \!\!+\!\sum_{i=1}^Na(R^{T}_{i}\!\!u_i,R^{T}_{i}\!\!u_i)\\
            &= \hat{a}_0(u_0,u_0) \!\!+\!a(u\!-\!\hat{R}_0^Tu_0,u\!-\!\hat{R}_0^Tu_0) \hspace{6pt}\text{(orthogonality of each subdomain)}\\
          & \leq \hat{a}_0(u_0,u_0)  +2a(u,u)+2a(\hat{R}_0^Tu_0,\hat{R}_0^Tu_0)\hspace{10pt} \\
          &= 3\sum_{i=1}^N\!u_1^{(i)^T} \!\!S^{(i)} u_1^{(i)}\! +
          \sum_{i=1}^N u_2^{{(i)}^T} \!\!\hat{A}^{(i)}_{\Gamma \Gamma}u^{(i)}_2+2 a(u,u)\! +
          2\sum_{i=1}^N \!u_2^{{(i)}^T}\!\! {A}^{(i)}_{\Gamma \Gamma}u^{(i)}_2.
        \end{split}
    \end{equation*}
    
   Here we use \Cref{theo3} and \Cref{bound1}, and $u_1^{(i)}=\hat{\Pi}^{(i)}_S R_{\Gamma_i\Gamma}u_0$, $u_2^{(i)}=R_{\Gamma_i\Gamma}u_0-\hat{\Pi}^{(i)}_S R_{\Gamma_i\Gamma}u_0$, for $1\leq i\leq N$.
    In case $\hat{A}^{(i)}_{\Gamma\Gamma}$ is the diagonal or block-diagonal
    of $A^{(i)}_{\Gamma \Gamma}$, by using elementwise argument and 
    Cauchy-Schwarz inequalities, we have
    $A^{(i)}_{\Gamma \Gamma} \leq 3 \hat{A}^{(i)}_{\Gamma\Gamma}$ for a general
    triangulation, or $A^{(i)}_{\Gamma \Gamma} \leq 2 \hat{A}^{(i)}_{\Gamma\Gamma}$
    for triangulation with right triangles. 
    
    Then we have:
    \begin{equation*}
    \begin{split}
          &\leq   3\sum_{i=1}^N\!u_1^{(i)^T} \!\!S^{(i)} u_1^{(i)}\! +
          7\sum_{i=1}^N u_2^{{(i)}^T} \!\!\hat{A}^{(i)}_{\Gamma \Gamma}u^{(i)}_2+2 a(u,u)\\
          & \leq 2a(u,u) + 7\max\{1,\frac{1}{\hat{\lambda}_{min}(\eta)}\} \sum_{i=1}^N (u_1^{(i)}+u_2^{(i)})^T S^{(i)} (u_1^{(i)}+u_2^{(i)})\\
          &=
         2a(u,u) + 7\max\{1,\frac{1}{\hat{\lambda}_{min}(\eta)}\} u_0^T\sum_{i=1}^N  R_{\Gamma_i\Gamma}^T 
         S^{(i)}  R_{\Gamma_i\Gamma}u_0=(2+7\max\{1,\frac{1}{\hat{\lambda}_{min}(\eta)}\})a(u,u).
    \end{split}
    \end{equation*}
\end{proof}

The following two assumptions follow similar arguments as above.
\begin{lemma} (Assumption ii) We have $\mu(\epsilon)=1$.
\end{lemma}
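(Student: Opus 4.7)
The plan is essentially to repeat, verbatim, the argument given for the exact-solver version of Assumption ii earlier in the paper. The key observation is that the inexact variant of NOSAS only modifies the coarse component (the operator $\hat{R}_0^T$, the inexact bilinear form $\hat{a}_0$, and the associated generalized eigenproblem using $\hat{A}_{\Gamma\Gamma}^{(i)}$); the local subspaces $V_1,\dots,V_N$ and the restriction/extension operators $R_i,R_i^T$ for $1\le i\le N$ are \emph{unchanged} from the exact case. Since Assumption ii concerns only the pairwise $a(\cdot,\cdot)$-interaction of the local components of a decomposition, and these components are defined by objects that have not been altered, orthogonality persists.

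Concretely, I would proceed as follows. First, recall that each $V_i$ consists of functions in $V_h(\Omega_i)$ vanishing on $\partial\Omega_i$, so for any $u_i\in V_i$ the extension $R_i^T u_i$ is supported in $\overline{\Omega}_i$ and is identically zero on $\partial\Omega_i$. Second, for $i\ne j$, the non-overlapping assumption $\Omega_i\cap\Omega_j=\emptyset$ implies that the supports of $R_i^Tu_i$ and $R_j^Tu_j$ intersect only on the shared interface $\overline{\Omega}_i\cap\overline{\Omega}_j$, which has Lebesgue measure zero in $\mathbb{R}^d$. Therefore
\[
a(R_i^T u_i,R_j^T u_j)=\int_\Omega \rho(x)\,\nabla(R_i^Tu_i)\cdot\nabla(R_j^Tu_j)\,dx = 0,
\]
for every $u_i\in V_i$ and $u_j\in V_j$ with $i\ne j$.

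Hence, the smallest admissible constants in the Cauchy--Schwarz-type inequality defining $\{\epsilon_{ij}\}$ are $\epsilon_{ij}=0$ for $i\ne j$ and $\epsilon_{ii}=1$. The matrix $\epsilon$ is then the $N\times N$ identity, and its spectral radius is $\mu(\epsilon)=1$.

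There is no genuine obstacle here: the only thing worth checking is that the construction of the inexact solver has not altered the local spaces or the extension operators in any way that would couple $V_i$ and $V_j$, and a direct inspection of the definitions in the previous subsection confirms this. For that reason, I expect to present the proof as a one-line argument relying on the orthogonality of the non-overlapping local spaces, exactly as was done for the exact-solver analogue.
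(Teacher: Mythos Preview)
Your proposal is correct and matches the paper's own treatment: the paper simply remarks that this assumption ``follow[s] similar arguments as above,'' i.e., the orthogonality of the non-overlapping local spaces $V_i$ is unchanged by the inexact coarse solver, so $\mu(\epsilon)=1$ exactly as in the exact case. Your explicit verification that only the coarse component is modified while the $V_i$ and $R_i^T$ are untouched is precisely the point.
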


\begin{lemma} \label{ass2} (Assumption iii) We have 
  \[
  a(R_i^Tu_i,R_i^Tu_i)\leq  {a}_i(u_i,u_i) \hspace{20pt} \forall u_i\in V_i \quad 1\leq i\leq N,
  \]
  and \[
  a(\hat{R}_0^Tu_0,\hat{R}_0^Tu_0)\leq  3\hat{a}_0(u_0,u_0) \hspace{20pt} \forall u_0\in V_0.
  \]
\end{lemma}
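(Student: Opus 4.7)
The first inequality is essentially by definition: since $a_i(u_i,u_i) := a(R_i^T u_i, R_i^T u_i)$, we have equality with $\omega_i = 1$ exactly as in the exact-solver case. So the real content is the second bound involving the coarse inexact extension $\hat{R}_0^T$.

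My plan is to exploit the fact that both $a(\hat{R}_0^T u_0, \hat{R}_0^T u_0)$ and $\hat{a}_0(u_0, u_0)$ have already been expressed, locally, in a parallel form through \Cref{bound1} and \Cref{theo3}. First I would decompose across subdomains using the fact that $V_h(\Omega)$ is a direct sum and the $a(\cdot,\cdot)$ form splits additively as $a(\hat{R}_0^T u_0, \hat{R}_0^T u_0) = \sum_{i=1}^N a^{(i)}(\hat{R}_0^{(i)^T} R_{\Gamma_i\Gamma} u_0, \hat{R}_0^{(i)^T} R_{\Gamma_i\Gamma} u_0)$, since the local extensions $\hat{R}_0^{(i)^T}$ are supported in disjoint $\overline{\Omega}_i$ in the interior. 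Writing $u_{\Gamma_i} = R_{\Gamma_i\Gamma} u_0$, and setting $u_1^{(i)} = \hat{\Pi}_S^{(i)} u_{\Gamma_i}$, $u_2^{(i)} = u_{\Gamma_i} - u_1^{(i)}$, \Cref{bound1} gives
\begin{equation*}
a^{(i)}(\hat{R}_0^{(i)^T} u_{\Gamma_i}, \hat{R}_0^{(i)^T} u_{\Gamma_i}) = u_1^{(i)^T} S^{(i)} u_1^{(i)} + u_2^{(i)^T} A^{(i)}_{\Gamma\Gamma} u_2^{(i)},
\end{equation*}
while \Cref{theo3} gives
\begin{equation*}
\hat{a}_0^{(i)}(u_{\Gamma_i}, u_{\Gamma_i}) = u_1^{(i)^T} S^{(i)} u_1^{(i)} + u_2^{(i)^T} \hat{A}^{(i)}_{\Gamma\Gamma} u_2^{(i)}.
\end{equation*}

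Next I would observe that the two right-hand sides differ \emph{only} in the second term, where $A^{(i)}_{\Gamma\Gamma}$ appears in the first expression and its (block) diagonal $\hat{A}^{(i)}_{\Gamma\Gamma}$ in the second. The bound $A^{(i)}_{\Gamma\Gamma} \leq 3\, \hat{A}^{(i)}_{\Gamma\Gamma}$ invoked just above (in the proof of Assumption i) handles this gap: it comes from an elementwise Cauchy--Schwarz argument on the interface nodal submatrix, and holds for a general shape regular triangulation (with constant $2$ in the right-triangle case). Applying it termwise yields $a^{(i)}(\hat{R}_0^{(i)^T} u_{\Gamma_i}, \hat{R}_0^{(i)^T} u_{\Gamma_i}) \leq 3\, \hat{a}_0^{(i)}(u_{\Gamma_i}, u_{\Gamma_i})$, since $u_1^{(i)^T} S^{(i)} u_1^{(i)} \geq 0$ and the factor $3$ certainly dominates the factor $1$ on the first term.

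Summing over $i = 1, \ldots, N$ and using the subdomain-wise splittings of both $a(\hat{R}_0^T u_0, \hat{R}_0^T u_0)$ and $\hat{a}_0(u_0, u_0) = \sum_{i=1}^N \hat{a}_0^{(i)}(R_{\Gamma_i\Gamma} u_0, R_{\Gamma_i\Gamma} u_0)$ then gives $a(\hat{R}_0^T u_0, \hat{R}_0^T u_0) \leq 3\, \hat{a}_0(u_0,u_0)$, which is what is claimed. I do not expect any real obstacle here: the two previous local identities have already absorbed the work of relating the two bilinear forms to a common ``$S^{(i)}$ + $\hat{A}^{(i)}_{\Gamma\Gamma}$'' template, so the proof reduces to noting which matrix appears where and quoting the spectral bound $A^{(i)}_{\Gamma\Gamma} \leq 3\, \hat{A}^{(i)}_{\Gamma\Gamma}$. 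The only point to keep in mind is that this constant $3$ reflects the general triangulation case; for right triangles one can replace it by $2$.
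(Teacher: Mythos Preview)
Your proposal is correct and follows essentially the same route as the paper: invoke \Cref{bound1} and \Cref{theo3} to write both $a(\hat{R}_0^T u_0,\hat{R}_0^T u_0)$ and $\hat{a}_0(u_0,u_0)$ subdomain-wise in the common form $u_1^{(i)^T}S^{(i)}u_1^{(i)} + u_2^{(i)^T}(\cdot)u_2^{(i)}$, then apply the spectral bound $A^{(i)}_{\Gamma\Gamma}\leq 3\hat{A}^{(i)}_{\Gamma\Gamma}$ and sum over $i$. The paper's proof is just a terser version of exactly this argument.
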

\begin{proof}
  The first inequality follows from
  the definition of the $a_i(\cdot,\cdot)$ for $1\leq i\leq N$. 

 The second inequality, we define $u_1^{(i)}=\hat{\Pi}^{(i)}_S R_{\Gamma_i\Gamma}u_0$, $u_2^{(i)}=R_{\Gamma_i\Gamma}u_0-\hat{\Pi}^{(i)}_S R_{\Gamma_i\Gamma}u_0$, for $1\leq i\leq N$. Then we use \Cref{bound1}, \Cref{theo3}, and $A^{(i)}_{\Gamma \Gamma} \leq 3 \hat{A}^{(i)}_{\Gamma\Gamma}$ to get:
\begin{equation*}
    \begin{split}
        a(\hat{R}_0^Tu_0,\hat{R}_0^Tu_0) = \sum_{i=1}^Nu_1^{(i)^T}\!\! S^{(i)} u_1^{(i)} +
          \sum_{i=1}^N \!u_2^{{(i)}^T}\!\! {A}^{(i)}_{\Gamma \Gamma}u^{(i)}_2\leq 3\hat{a}_0(u_0,u_0).
    \end{split}
\end{equation*}
\end{proof}

\begin{theorem} \label{theo4} For any $u\in V_h(\Omega)$, the following holds:
  $$( 2 + 7\max\{1,\frac{1}{\hat{\lambda}_{min}(\eta)}\})^{-1}  a(u,u)\leq a(\hat{T}_Au,u) \leq 4 a(u,u),$$
where $\hat{T}_A$ defined similar as in \cref{Def_T_A} and $\hat{\lambda}_{min}(\eta)$ defined in previous page.
\end{theorem}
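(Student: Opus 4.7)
The plan is to apply the standard abstract additive Schwarz framework (\cite[Chapter 2]{MR2104179}) using the three assumptions verified in the immediately preceding lemmas, in direct analogy with the proof of \Cref{theo2}, but carefully handling the fact that the coarse solver is now inexact so that $\hat{a}_0(\cdot,\cdot) \neq a(\hat{R}_0^T\cdot,\hat{R}_0^T\cdot)$.

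For the lower bound, I would invoke Assumption (i) directly. For any $u \in V_h(\Omega)$, the unique decomposition $u = \hat{R}_0^T u_0 + \sum_{i=1}^N R_i^T u_i$ (with $u_0 = u_\Gamma$) satisfies the stability estimate with constant $C_0^2 = 2 + 7\max\{1, 1/\hat{\lambda}_{\min}(\eta)\}$, and the standard manipulation yields $a(\hat{T}_A u, u) \geq C_0^{-2} a(u,u)$.

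For the upper bound, I would split the contribution as $a(\hat{T}_A u, u) = a(\hat{T}_0 u, u) + \sum_{i=1}^N a(T_i u, u)$ and bound each piece separately. For the coarse piece, the defining identity $\hat{a}_0(\tilde{T}_0 u, v_0) = a(u, \hat{R}_0^T v_0)$ gives $a(\hat{T}_0 u, u) = \hat{a}_0(\tilde{T}_0 u, \tilde{T}_0 u)$; combining this with the second inequality of \Cref{ass2}, we get $a(\hat{R}_0^T\tilde{T}_0 u, \hat{R}_0^T\tilde{T}_0 u) \leq 3\hat{a}_0(\tilde{T}_0 u, \tilde{T}_0 u) = 3 a(\hat{T}_0 u, u)$, and one application of Cauchy--Schwarz in the $a$-inner product gives $a(\hat{T}_0 u, u) \leq 3 a(u,u)$. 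For the subdomain piece, the spaces $R_i^T V_i$ are pairwise $a$-orthogonal by Assumption (ii), and by the first inequality of \Cref{ass2} (with $\omega_i = 1$) each $T_i$ is the $a$-orthogonal projection onto $R_i^T V_i$; Bessel's inequality therefore yields $\sum_{i=1}^N a(T_i u, u) = \sum_{i=1}^N a(T_i u, T_i u) \leq a(u,u)$. Adding the two contributions produces $a(\hat{T}_A u, u) \leq 3 a(u,u) + a(u,u) = 4\, a(u,u)$.

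The main subtlety, and indeed the reason the final constant is $4$ rather than the generic $\max_i \omega_i(\mu(\epsilon)+1) = 6$, is that one cannot naively substitute $\omega_0 = 3$ into the generic framework bound because $\hat{a}_0 \neq a(\hat{R}_0^T\cdot,\hat{R}_0^T\cdot)$. Instead the coarse contribution (carrying the factor $3$ from the elementwise estimate $A^{(i)}_{\Gamma\Gamma} \leq 3\hat{A}^{(i)}_{\Gamma\Gamma}$) and the mutually-orthogonal subdomain contributions (each a true $a$-orthogonal projection, contributing a combined $1$ by Bessel) must be assembled additively, producing exactly $4 = 3 + 1$.
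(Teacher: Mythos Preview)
Your proof is correct and follows the same route as the paper, which simply invokes the abstract additive Schwarz framework of \cite[Chapter 2]{MR2104179}; you have merely unpacked the upper-bound argument to exhibit how the constant $4$ arises as $\omega_0 + \mu(\epsilon)\max_{1\leq i\leq N}\omega_i = 3 + 1$, which is exactly the refined form of the framework bound the paper is tacitly using.
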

\begin{proof}
  It follows from the additive Schwarz theory; see \cite[Chapter 2]{MR2104179}.
\end{proof}

Also we note that similar as the exact solver, we can choose $\eta=O(h/H)$ to guarantee condition number is $O(H/h)$.

\subsection{Comparison with other methods}  \label{BDD-GenE0} \hspace*{\fill}

We compare our method with BDD-GenEO \cite{MR3093793}.
The resemblance between BDD-GenEO in \cite{MR3093793} and NOSAS is that the generalized eigenvalue problems are in $\Omega_i$. In BDD-GenEO is $D_i{S}^{(i)}D_i\xi_j^{(i)}=\lambda_j^{(i)}R_{\Gamma_i\Gamma}A_{\Gamma\Gamma}R_{\Gamma_i\Gamma}^{T}\xi_j^{(i)}$ in each subdomain $\Omega_i$, where the $D_i$ are
diagonal matrices associated to a proper partition of unity,  and
the right-hand side uses information from adjacent subdomains.
NOSAS methods neither require partition of unity nor information from adjacent
subdomains, that is, all the information needed is the
Neumann matrix $A^{(i)}$. We also note that NOSAS methods
are based on AAS, and differently from BDD-based solvers, the exact  $S^{(i)}$ are not required
when applying the preconditioned system. We note that 
  there are other versions on the literature, such as in \cite{MR3093793},
  where $\int_{\partial \Omega_i} \rho(x) u_{\Gamma_i} v_{\Gamma_i} \,ds$ is used rather than 
  $u_{\Gamma_i}^TA^{(i)}_{\Gamma\Gamma}v_{\Gamma_i}$ on the right-side of the generalized eigenvalue problem. We note this choice is different from our choice, not only by a $h$ scaling but also on the dependence of the coefficients, as showing in \Cref{NOSAS_figure3}. This difference becomes evident when proving \Cref{isolated_thm}, see below. Finally, we remark that the way we define the coarse functions on $\Gamma$ are based on $a$-minimum energy at the nodes on $\Gamma_i$ while the BDD-GenEO is based on the partition of unity. 

    To better understand how coefficients in $\Omega_i$ effects the number of eigenvalues of the 
    generalized eigenvalue problem  \cref{exact_eigen}, we next define the concept of  high-permeable island as:

\begin{definition}
  \label{island_def}

A high-permeable island ${\Omega}_{i,m} \subset  \overline{\Omega}_i$
with high-contrast coefficients is defined by: 1)  ${\Omega}_{i,m}$ is a
closed connected region (union of elements with large coefficients
$\rho_1$),  2) ${\Omega}_{i,m}$ is surrounding by elements with small
coefficients $\rho_2$.
%3) ${\Omega}_{i,m}$ touches $\Gamma_i$ in at least one node and does not touch Dirichlet boundary.
\end{definition}

We remind that elements are closed sets and $\Gamma$ does not include any
Dirichlet node on $\partial \Omega$ and an island can be a channel or an inclusion. 
Next, we want to find the number of small eigenvalues with $O(\rho_2/\rho_1)$ when we have only two high-contrast coefficients in each subdomain. Inspired by the Appendix A of \cite{MR2728702}, we have the following theorem for two-dimensional subdomain $\Omega_i$.
\begin{theorem}
\label{isolated_thm} Assume that $\rho_1\gg\rho_2$, 
then the number of
small eigenvalues $O(\rho_2/\rho_1)$ of the generalized eigenvalue problem
  \cref{exact_eigen}
is equal to the number of high-permeable islands in \cref{island_def} that touch $\Gamma_i$ in at least one node and does not touch Dirichlet boundary. 
 \end{theorem}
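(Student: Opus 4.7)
My strategy is to apply the Courant--Fischer min--max characterization of the generalized eigenvalues of~\cref{exact_eigen}. Let $M$ denote the number of high-permeable islands $\Omega_{i,m}\subset \overline{\Omega}_i$ that touch $\Gamma_i$ at a node and do not touch $\partial\Omega$. I will establish two matching bounds: (a)~there is an $M$-dimensional subspace of $V_h(\Gamma_i)$ on which the Rayleigh quotient $R(u):=(u^{T}S^{(i)}u)/(u^{T}A^{(i)}_{\Gamma\Gamma}u)$ is $O(\rho_2/\rho_1)$, forcing $\lambda_M^{(i)}$ to be small; and (b)~there is a subspace of codimension $M$ on which $R(u)\succeq 1$ uniformly in the contrast, forcing $\lambda_{M+1}^{(i)}$ to stay bounded away from zero. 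The whole argument is driven by the interpretation recalled in \Cref{NewMethod}: $u^{T}S^{(i)}u$ is the energy of the $a_i$-discrete harmonic extension of $u$ while $u^{T}A^{(i)}_{\Gamma\Gamma}u$ is the energy of the zero extension.

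\textbf{Trial functions (direction a).} For each qualifying island $\Omega_{i,m}$ I take $\phi_m\in V_h(\Gamma_i)$ to be the nodal indicator of $\Omega_{i,m}\cap \Gamma_i$. Distinct islands have disjoint nodal supports on $\Gamma_i$, so the $\phi_m$ are linearly independent. To upper bound $\phi_m^{T}S^{(i)}\phi_m$, I use the competitor $\tilde\phi_m$ equal to one at every node of $\Omega_{i,m}$ and zero elsewhere in $\overline{\Omega}_i$: since $\tilde\phi_m$ is piecewise constant inside and outside the island, its gradient is supported only in the transition elements between the island and its surrounding, which by \Cref{island_def} carry coefficient $\rho_2$. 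This yields $\phi_m^{T}S^{(i)}\phi_m \preceq \rho_2$. The denominator equals the energy of the zero extension, which has a $\Theta(1)$ gradient inside the $\rho_1$-elements of the island that touch $\Gamma_i$ (value one on the boundary node, zero on the interior node), contributing $\Theta(\rho_1)$. Hence $R(\phi_m)=O(\rho_2/\rho_1)$. Because the islands are separated by $\rho_2$-elements, the off-diagonal couplings $\phi_m^{T}A^{(i)}_{\Gamma\Gamma}\phi_n$ and $\phi_m^{T}S^{(i)}\phi_n$ for $m\neq n$ are at most $O(\rho_2)$, so the $M\times M$ Rayleigh matrix is diagonally dominant and the bound $O(\rho_2/\rho_1)$ propagates to the entire span.

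\textbf{Orthogonal complement (direction b, the main obstacle).} Let $W\subset V_h(\Gamma_i)$ be the codimension-$M$ subspace of functions whose nodal average over each set $\Omega_{i,m}\cap \Gamma_i$ vanishes. The crux is a contrast-independent discrete weighted Poincar\'e inequality: for $u\in W$, $u^{T}A^{(i)}_{\Gamma\Gamma}u \preceq u^{T}S^{(i)}u$ with constant independent of $\rho_1,\rho_2$. The intuition is that any mode surviving the $\rho_1\to\infty$ limit with bounded $S^{(i)}$-energy must be constant on each island, and the zero-mean constraint of $W$ forces that constant to vanish. To make this quantitative I will split $u^{T}A^{(i)}_{\Gamma\Gamma}u$ into contributions from island elements (weighted by $\rho_1$) and from purely low-coefficient elements (weighted by $\rho_2$); on each island the mean-zero condition enables a discrete Poincar\'e estimate that bounds the $\rho_1$-contribution by the $\rho_2$-weighted gradient energy across the surrounding transition elements, which is itself controlled by the $a_i$-discrete harmonic extension energy. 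This part is where the structural hypotheses on the islands are essential, and where I will follow and adapt the scheme of Appendix~A in~\cite{MR2728702} from the overlapping GenEO eigenproblem to the Neumann-matrix eigenproblem~\cref{exact_eigen}.

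\textbf{Role of the hypotheses.} The two exclusions in the statement drop out naturally from the above. If $\Omega_{i,m}$ touched $\partial\Omega$, then the $a_i$-discrete harmonic extension would be pinned to zero there, so any island-constant mode would pay $\Theta(\rho_1)$ for the Dirichlet jump across $\rho_1$-elements adjacent to $\partial\Omega$, ruling out a small eigenvalue. If $\Omega_{i,m}$ did not touch $\Gamma_i$, it would be entirely interior to $\Omega_i$ and already factored out by $(A^{(i)}_{II})^{-1}$ in the Schur complement $S^{(i)}$, so it produces no trial function in $V_h(\Gamma_i)$ at all. Together with directions~(a) and~(b), these observations give exactly $M$ small eigenvalues of order $\rho_2/\rho_1$, with all remaining eigenvalues of order one.
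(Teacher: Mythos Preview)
Your overall architecture matches the paper's proof exactly: Courant--Fischer in both directions, with an explicit $M$-dimensional trial space for $\lambda_M^{(i)}$ and a codimension-$M$ constraint space for $\lambda_{M+1}^{(i)}$. Direction~(a) is essentially what the paper does; your island indicator functions $\phi_m$ and the competitor $\tilde\phi_m$ are the paper's $v_m$ and $\mathcal{E}_{1,m}(v_m)$.

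In direction~(b) there is a genuine slip in the mechanism you describe. You write that the mean-zero condition lets you bound the $\rho_1$-contribution to $u^T A^{(i)}_{\Gamma\Gamma}u$ by the \emph{$\rho_2$-weighted} gradient energy across the transition elements. That cannot work: the $\rho_1$-part of the denominator behaves like $\rho_1\sum_{x_j\in\Gamma_{i,m}}u(x_j)^2$, and no Poincar\'e inequality will convert a $\rho_1$-weight into a $\rho_2$-weight with a contrast-independent constant. The paper's route is different and is the one you should use: with $v_1$ the $a_i$-discrete harmonic extension, one shows
\[
\rho_1\,|v_2|^2_{H^1(\Gamma_{i,m}^\delta)} \;\preceq\; C_{\mathrm{geom}}\,\rho_1\,|v_1|^2_{H^1(\Omega_{i,m})} \;\le\; C_{\mathrm{geom}}\,v^T S^{(i)} v,
\]
so the $\rho_1$'s match and cancel, and the constant depends only on the geometry of $\Omega_{i,m}$ and the mesh. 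No $\rho_2$ appears in this step at all; the $\rho_2$-part of the denominator is handled separately by the last term in the paper's estimate~\cref{esti2}.

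The second point you are missing is how the Poincar\'e step actually works when $\Gamma_{i,m}=\Omega_{i,m}\cap\Gamma_i$ is \emph{disconnected}. A mean-zero constraint on $\Gamma_{i,m}$ alone gives you nothing across components. The paper fixes one node $x_*^m\in\Gamma_{i,m}$ (so it uses point constraints rather than your mean-zero constraints, though either choice is fine at the level of defining $W_c^*$) and then runs telescoping sums along discrete paths \emph{through the interior of the connected island} $\Omega_{i,m}$, using the values of $v_1$ at interior nodes; see the graphs $G_k^m$ and the bounds~\cref{linfcontrol}--\cref{multiple}. This is where the geometric constants $|\Gamma_{i,m}^k|/h$ and $|G_k^m|/h$ enter the final estimate~\cref{boundci}. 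Your plan, as written, would stall on any island that touches $\Gamma_i$ in two or more separated pieces, and the transition-element idea does not substitute for the interior-path argument.
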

\begin{proof}
 Suppose there are $\tilde{M}$ high-permeable  islands $\Omega_{i,m}$, and only $\Omega_{i,m}$ $(1\leq m\leq M)$ touch $\Gamma_i$ in at least one node and does not touch Dirichlet boundary. Consider $\lambda_1^{(i)}\leq \lambda_2^{(i)} \leq \cdots \leq \lambda^{(i)}_{n_i}$ of the generalized eigenvalue problem   \cref{exact_eigen}.

 We first present an upper bound for
 $\lambda_M^{(i)}$ using Courant--Fischer--Weyl min-max principle given by
   \begin{equation*}
    \lambda_M^{(i)}=\min_{dim(W)=M}\max_{v\,\in W\backslash \{ 0 \}}R(v) \leq
    \max_{v\,\in W^* \backslash \{ 0 \}}R(v)  \hspace{10pt}\text{where} 
    \hspace{10pt} R(v)=\frac{v^TS^{(i)}v}{v^TA_{\Gamma\Gamma}^{(i)}v}.
\end{equation*}
   Here  $W$ is any M-dimensional subspace of $V_h(\Gamma_i)$ and
  $W^*=\text{Span}\{v_1, v_2,\cdots,v_M\}$ where $v_m$ are $M$
  linearly independent vectors of $V_h(\Gamma_i)$ introduced as follows.
  Let $\Gamma_{i,m} := \partial \Omega_{i,m} \cap \Gamma_{i}$, that is, the boundary of the  high-permeable island $\Omega_{i,m}$ which touches $\Gamma_i$ and
  define  $v_m\in V_h(\Gamma_i)$ to be equal to one 
  on nodes of $\Gamma_{i,m}$ and equal to zero on the remaining nodes
  of $\Gamma_i$. We denote $\Gamma_{i,m}^\delta$ as the union of all elements of $\Omega_{i,m}$  which touch at least one node of $\Gamma_{i,m}$. Let us define $\mathcal{E}_{1,m}(v_m)\in V_h(\Omega_i)$ to be equal to
  one on the nodes of ${\Omega}_{i,m}$ for $1\leq m\leq M$ and equal to zero at the remaining
  nodes of  $V_h(\Omega_i)$. We define $\mathcal{E}_{2,m}(v_m)\in V_h(\Omega_i)$ as the zero trivial  extension of $v_m$ in $V_h(\Omega_i)$. Then given 
  $v \in W^*$ we can write as $v = \sum_{m=1}^M \alpha_m v_m$. We have
  \begin{equation*}
    v^TS^{(i)}v \leq \int_{\Omega_i}\rho(x)| \nabla
    \sum_{m=1}^M \mathcal{E}_{1,m} (\alpha_m
    v_m)|^2dx  \preceq
    \sum_{m=1}^M \alpha^2_m \rho_2|\partial \Omega_{i,m} \backslash \Gamma_{i,m}|/h,
  \end{equation*}
  where $|\partial \Omega_{i,m} \backslash \Gamma_{i,m}|$ denotes the lenght of $
  \partial \Omega_{i,m} \backslash \Gamma_{i,m}$. The first inequality follows
  from minimum energy of $a_i$-discrete harmonic extension. The second
  inequality follows from computing energy of zero extensions. The
  hidden constant of the second inequality depends only
  on the shape of the elements. We also have
 \begin{equation*}
   v^TA_{\Gamma \Gamma}^{(i)}v  = \int_{\Omega_i}\rho(x) |\nabla
    \sum_{m=1}^M \mathcal{E}_{2,m} (\alpha_m
   v_m)|^2dx\asymp    
\sum_{i=1}^M \alpha_m^2\rho_1  |\Gamma^\delta_{i,m}|/h^2.
 \end{equation*}
 We note that we have used $|\Gamma_{i,m}^\delta|/h^2$ rather than
 $|\Gamma_{i,m}|/h$ because $\Gamma_{i,m}$ might be just a node. 
 We finally obtain
 \begin{equation*}
   R(v)=\frac{v^TS^{(i)}v}{v^TA_{\Gamma\Gamma}^{(i)}v} \preceq
   \frac{\rho_2}{\rho_1}
  \frac{ \displaystyle{\max_{1\leq m\leq M}} h|\partial \Omega_{i,m} \backslash \Gamma_{i,m}|}
        {\displaystyle{\min_{1\leq m\leq M}}  |\Gamma_{i,m}^\delta|}.
 \end{equation*}
  
 Now we prove that there are at most $M$ small eigenvalues of
 $O(\rho_2/\rho_1)$. The $(M+1)$-th smallest  
 eigenvalue can be characterized via Courant--Fischer--Weyl min-max
 principle given by
\begin{equation*}
  \lambda_{M+1}^{(i)}=\max_{codim(W_c)=M}\min_{v\,\in W_c\backslash \{ 0 \}}R(v)
  \geq \min_{v\,\in W^*_c\backslash \{ 0 \}}R(v),
\end{equation*}
where $W_c$ is any subspace of $V_h(\Gamma_i)$ with codimension $M$, that is,
dimension $n_i - M$. The subspace $W^*_c$ of codimension $M$ is defined by 
\begin{equation*}
  W^*_c  =\{v\in V_h(\Gamma_i):  v(x_*^m)= 0, ~~  \text{for all}~~  1 \leq m \leq M\},
\end{equation*}
where $x_*^m$ is any selected node of $\Gamma_{i,m}$. 

Let $v \in W^*_c$, define $v_1\in V_h(\Omega_i)$ as the $a_i$-discrete harmonic extension of $v$ in $\Omega_i$ and $v_2\in V_h(\Omega_i)$ as the zero extension of $v$ in $\Omega_i$. We have 

 \begin{equation} \label{lambdaM1} 
\begin{split}
    R(v)=\frac{v^TS^{(i)}v}{v^TA_{\Gamma\Gamma}^{(i)}v}&=\frac{\int_{\Omega_i}\rho(x) |\nabla v_1|^2dx}{\int_{\Omega_i}\rho(x) |\nabla v_2|^2dx}=\frac{\int_{\Omega_i}\rho(x) |\nabla v_1|^2dx}{\int_{\Gamma_i^\delta}\rho(x) |\nabla v_2|^2dx},\\
    & = \frac{\sum_{m=1}^{\tilde{M}}(\rho_1-\rho_2)|v_1|^2_{H^1(\Omega_{i,m})}+ \rho_2|v_1|^2_{H^1(\Omega_{i})}}{\sum_{m=1}^{\tilde{M}}(\rho_1-\rho_2)|v_2|^2_{H^1(\Gamma_{i,m}^\delta)}+\rho_2|v_2|^2_{H^1(\Gamma_i^\delta)}},
    \end{split}
\end{equation}
where ${\Omega}_{i,m}$ for $1\leq m\leq \tilde{M}$ are the high-permeable islands. Among these islands we consider three types: Case 1)  ${\Omega}_{i,m}
 \cap \Gamma_i \neq \emptyset$ and ${\Omega}_{i,m}
 \cap \partial \Omega = \emptyset$; Case 2) ${\Omega}_{i,m} \cap
 \Gamma_i \neq \emptyset$ and ${\Omega}_{i,m} \cap
 \partial \Omega \neq  \emptyset$; Case 3) ${\Omega}_{i,m} \cap
 \Gamma_i = \emptyset$.
 We first consider the Case 1) for $\Omega_{i,m}$ with $1\leq m\leq M$ and later we consider the Case 2) and Case 3) for $\Omega_{i,m}$ with $M+1\leq m\leq \tilde{M}$. 

 Note that $\Gamma_{i,m} = {\Omega}_{i,m} \cap \Gamma_i$ might not
 be connected.  We first consider the case that $\Gamma_{i,m}$ is connected. Assume there are $J^m\geq1$ nodes on
 $\Gamma_{i,m}$. Then we have 
\begin{equation}  \label{localv2} 
\begin{split} 
  |v_2|^2_{H^1(\Gamma_{i,m}^\delta)}&\asymp \sum_{j=1}^{J^m}  v(x_j^m)^2 =
  \sum_{j=1}^{J^m}(v(x_j^m) - v(x^m_*))^2  \preceq (J^m)^2 \sum_{j=2}^{J^m}
(v(x_{j-1}^m) - v(x^m_j))^2 \\  
& \preceq (J^m)^2 |v_1|^2_{H^1(\Gamma_{i,m}^\delta)}   
  \preceq (|\Gamma_{i,m}|/h)^2 |v_1|^2_{H^1(\Omega_{i,m})}.
\end{split}
\end{equation} 
 
For the case $\Gamma_{i,m}$ is not connected, assume we have $K_m$ connected
components $\Gamma_{i,m}^k$ for $1 \leq k \leq K_m$. Without loss of generality,
assume $x_*^m \in  \Gamma_{i,m}^1$. Since ${\Omega}_{i,m}$ is connected,
for each $\Gamma_{i,m}^k$ let us select a node  $x_*^{m,k} \in  \Gamma_{i,m}^k$
and the shortest path from $x_*^m$ to $x_*^{m,k}$  for  $2 \leq k \leq K_m$. The
paths are graphs $G_k^m= (V_k^m,E_k^m)$ using only edges and vertices of
the triangulation on ${\Omega}_{i,m}$. Let $V_k^m$ are the
vertices of $G_k^m$ and denoted by
\begin{equation*} 
  V_k^m= \{ x_*^m = y_{1,k}^m, y_{2,k}^m ,\cdots, y_{L_k^m,k}^m =x_*^{m,k}\},
\end{equation*}
and note that $L_k^m\asymp  |G_k^m|/h$ where $|G_k^m|$ is the lenght of $G_k^m$.
Using similar arguments and using that $v(x_*^{m})= 0$ we obtain 
\begin{equation} \label{linfcontrol} 
      v(x_*^{m,k})^2 \preceq L_k^m\sum_{j=2}^{L_k^m}
  |v_1(y_{j,k}^m) - v_1(y_{j-1,k}^m)|^2 \preceq
  (|G_k^m|/h) |v_1|^2_{H^1(\Omega_{i,m})}.
\end{equation}
Using similar argument as in \cref{localv2} and using \cref{linfcontrol} we obtain 
\begin{equation}  \label{multiple} 
  \begin{split} 
    |v_2|^2_{H^1(\Gamma_{i,m}^\delta)} & \asymp \sum_{k=1}^{K_m}       \sum_{j=1}^{J^m_k}
    v(x_{j,k}^m)^2 \preceq 
    \sum_{k=1}^{K_m}  \sum_{j=1}^{J^m_k} \left(
 ( v(x_{j,k}^m) - v(x_*^{m,k}))^2 + v(x_*^{m,k})^2 \right) \\ 
    &     \preceq
\left(
(|\Gamma_{i,m}^1|/h)^2 + \sum_{k=2}^{K_m} \Big(|\Gamma_{i,m}^k|^2/h^2 + |\Gamma_{i,m}^k||G_k^m|/h^2\Big)
\right) |v_1|^2_{H^1(\Omega_{i,m})}.
\end{split} 
\end{equation}

We also should consider the Case 2) $\Omega_{i,m}$ for some $M+1 \leq
m \leq \tilde{M}$. Let $\Gamma_{i,m}^k$ for $1 \leq k \leq K_m$ be the connected components of
${\Omega}_{i,m} \cap \Gamma_i$. Now consider any node $x_*^m
\in {\Omega}_{i,m} \cap \partial \Omega$. Note that
the constraint $v(x_*^m) = 0$ is automatically satisfied without
imposing extra constraints in $W_c^*$. The same arguments above hold
by creating paths
from $x_*^{m,k}$ to  $x_*^{m}$ for $1\leq k\leq K_m$.  The Case 3) is easy to treat since $\Gamma_{i,m}^\delta=\emptyset$.

   Now we consider the last term  of \cref{lambdaM1} given by  
\begin{equation} \label{esti2} 
  |v_2|^2_{H^1(\Gamma_{i}^\delta)} \asymp \sum_{j=1}^{n_i}  v(x_j)^2 \preceq 
  n_i \max_{1\leq j\leq n_i} v(x_j)^2 \preceq (|\Gamma_i|/h)(1 + \log(\Gamma_i/h))
  |v_1|^2_{H^1(\Omega_{i})}.
\end{equation}
The last inequality holds because $v$ vanishes at the constraints nodes
$x^m_*$ and then we can use classical finite element $L_\infty$ bounds
in terms of energy for well-shaped polygonal domains of size $O(H)$.

Above all, from \cref{lambdaM1} and using the estimates \cref{localv2}, \cref{esti2} and \cref{multiple}
we obtain
\begin{equation} \label{boundci} 
    \frac{1}{\lambda_{M+1}^{(i)}} \preceq
    \max \Big\{\frac{H}{h}(1+\log\frac{H}{h}), \max_{1\leq m \leq\tilde{M}} \sum_{k=1}^{K_m} \Big(|\Gamma_{i,m}^k|/h)^2 +  |\Gamma_{i,m}^k||G_k^m|/h^2 \Big) \Big\},
\end{equation}
where we using the notation $|G_1^m| = 0$ for $1 \leq m \leq \tilde{M}$. Note that $\lambda_{M+1}^{(i)}$ does not depend on $\rho_1$ and $\rho_2$, it depends on the geometry of the high-permeable islands and the
size and shape of the elements and the subdomain $\Omega_i$.
\end{proof}
\begin{remark} \label{cHh} 
  We can obtain better bound in \cref{esti2} if the add the constraint that
  the average of $v$ on $\partial \Omega_i$ is zero. Indeed, by using a
  Poincar\'e inequality on $\partial \Omega_i$ we have 
\begin{equation*} 
  |v_2|^2_{H^1(\Gamma_{i}^\delta)} \asymp h^{-1} \|v\|_{L^2(\partial \Omega_{i})}^2
  \preceq (H/h) |v|_{H^{1/2}(\partial \Omega_i)}^2
   \preceq (H/h) |v|_{H^1(\Omega_{i})}^2
\end{equation*}
\end{remark}

We note that \cref{isolated_thm} holds for three-dimensional problems in
the sense that we obtain a lower bound for $\lambda_{M+1}^{(i)}$ which does not
depend on $\rho_1$ and $\rho_2$, however, the dependence on the geometries of
$\Omega_{i,m}$ and $h$ in \cref{boundci} changes. We also note that the
\cref{isolated_thm} holds also for the inexact version \cref{eigeninexact}
since $A_{\Gamma \Gamma}^{(i)}$ is spectrally equivalent to
$\hat{A}_{\Gamma \Gamma}^{(i)}$.

%There are economical versions for FETI-DP and BDCC preconditioners which decrease the complexity of $S^{(i)}$. The
%idea is to consider the $a_i$-discrete harmonic extensions on smaller
%regions near $\Gamma_i$ rather than on the whole $\Omega_i$, see \cite{MR4014789}. Our economical versions are described as follows.
%Let $\Gamma_i^\delta \subset \Omega_i$ be the union of the elements distant $\delta$ from $\Gamma_i$. We now consider the alternative
%generalized eigenvalue problems $S^{(i)}_\delta \xi_j^{(i)} = \lambda_j^{(i)}A^{(i)}_{\Gamma\Gamma}\xi_j^{(i)}$ or $S^{(i)}_\delta \xi_j^{(i)} = \lambda_j^{(i)}\hat{A}^{(i)}_{\Gamma\Gamma}\xi_j^{(i)}$, where $S^{(i)}_\delta$ is the $a_i$-discrete Dirichlet-Neumann operator on $\Gamma_i^\delta$ with zero Neumann boundary condition on $\Gamma_i^\delta \backslash \Gamma_i$. We note that \cref{isolated_thm} holds if the high-permeable islands  $\Omega_{i,m}$ are replaced by  high-permeable islands $\Omega_{i,m} \cap \Gamma_i^\delta$. We expect to have maybe more eigenvalues
%now since some of the paths connecting
%$\Gamma_{i,m}^k$ to $\Gamma_{i,m}^1$ might not exist anymore inside $\Gamma_i^\delta$ and so additional constraints must now be imposed on some $\Gamma_{i,m}^k$.

\section{NUMERICAL EXPERIMENTS}\label{numerical}\hspace*{\fill}

We now present results of problem \cref{elliptic} for a square domain $\Omega=(0,1)^2$ with $f=1$. We divide the square domain $\Omega$ into $(1/H)^2$
congruent square subdomains with $1/H$ is an integer. Then we divide each subdomain
into $(H/h)^2$ congruent squares where $H/h$ is an integer and divides
each of these squares into two right triangle elements. We imposed zero
Dirichlet boundary condition on $\partial \Omega$. We use the Preconditioned Conjugate Gradient method (PCG) and the number of iterations required to reduce the residual by $10^{-6}$.

We first study a proper threshold $\eta$, if  $\eta$ is chosen too small,
the condition number of the preconditioned system will be large, and if $\eta$ is chosen too large, the number of the
eigenvectors might be large and hence will increase the cost of the
coarse problem.  For constant coefficients, we know from \Cref{3.3.1} that
the smallest non-zero generalized eigenvalue is $O(\frac{h}{H})$. Numerically we also see the asymptotic behavior
to $c\frac{h}{H}$. The goal is to find a good constant "c" 
to consider $\eta = c\frac{h}{H}$ as the threshold for the
heterogeneous coefficients cases. \Cref{NOSAS_figure1} shows the generalized eigenvalue of constant coefficients in each subdomain, for the exact
NOSAS with \cref{exact_eigen}, $c=0.5$, $c=1.3$ and $c=3.2$ are
three interesting choices. The choice $c=0.5$ selects only the one
eigenvalue on the floating subdomains and none for the edge and corner
subdomains, $c=1.3$ selects one eigenvalue
for each subdomain, while $c=3.2$ selects four eigenvalues for
floating subdomains, two eigenvalues for edge subdomains and one
eigenvalue for corner subdomains; see also that the eigenvalues associated to
$c=3.2$  are a little
bit isolated from the rest. We note that the zero eigenvalues
of the floating subdomains are not plotted in \Cref{NOSAS_figure1}. For the
inexact NOSAS \cref{eigeninexact}, we obtain $c=0.25$, $c=0.64$ and $c=1.6$,
respectively.

\begin{figure}[tbhp]
\centering \subfloat[$H/h=8$]{\includegraphics[width=.5\linewidth]{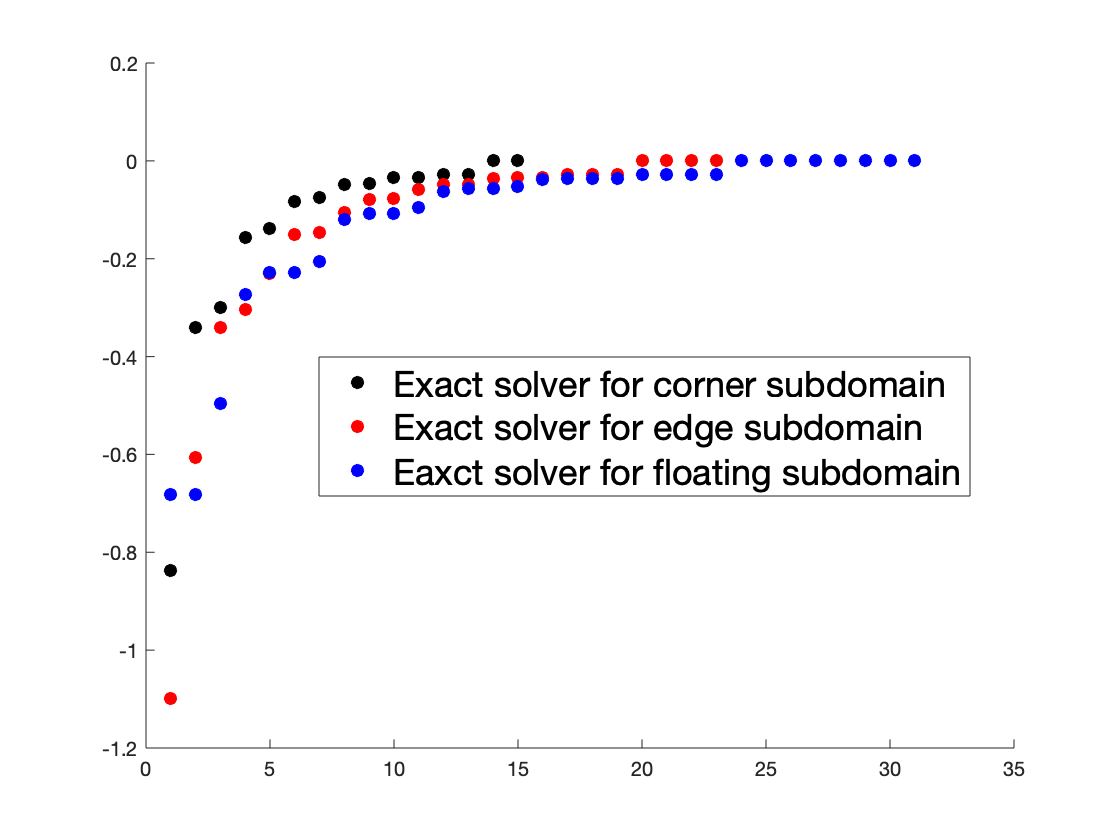}\hspace{-20pt} \includegraphics[width=.5\linewidth]{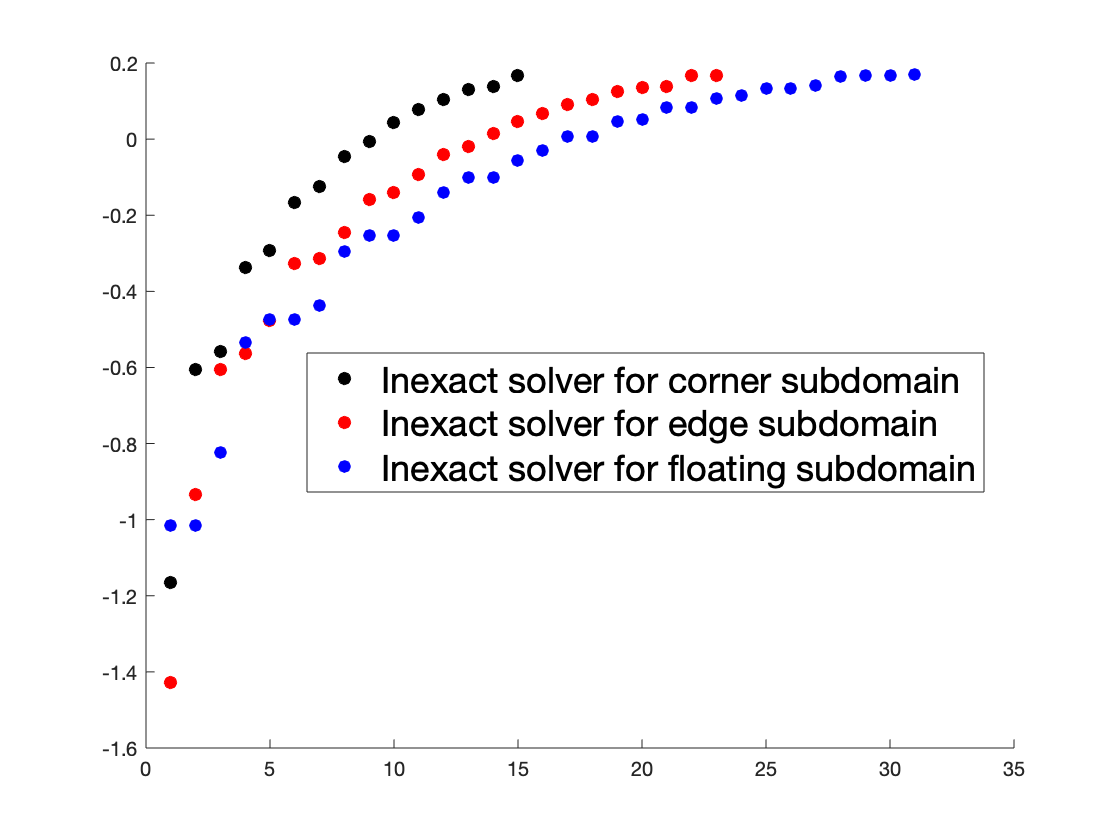}}\\

\subfloat[$H/h=16$]{\includegraphics[width=.5\linewidth]{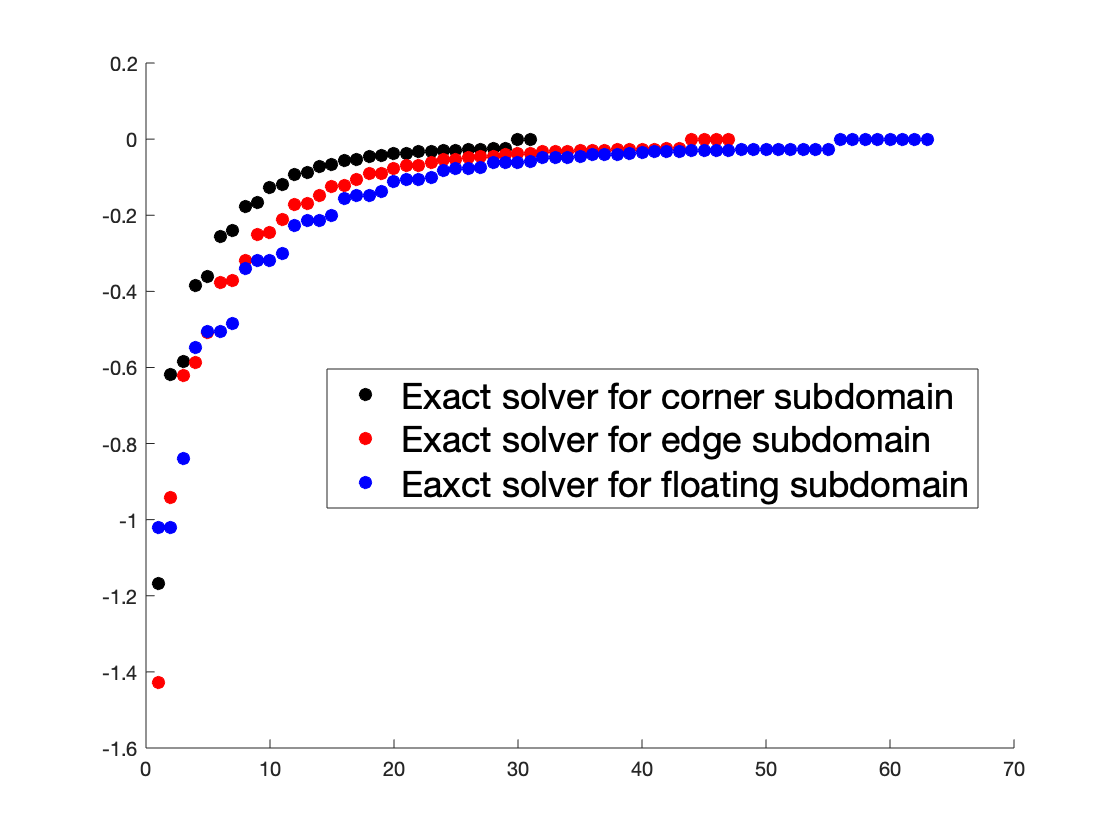}\hspace{-20pt} \includegraphics[width=.5\linewidth]{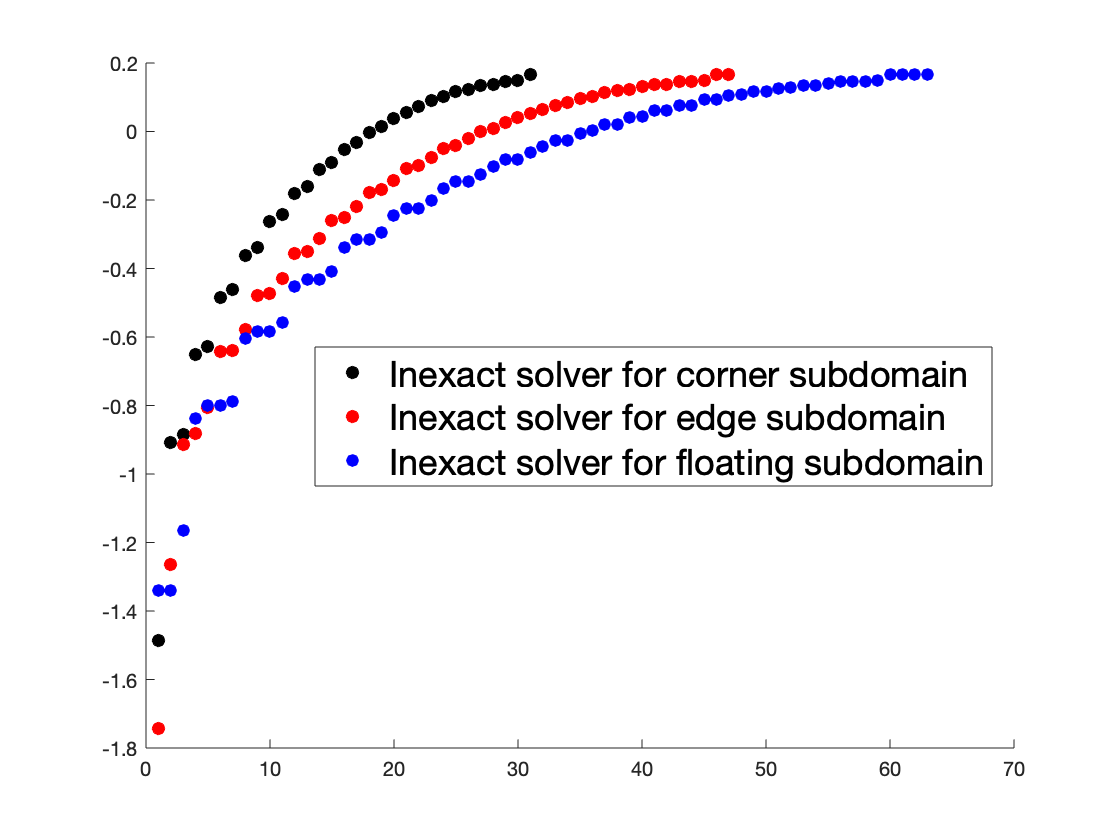}}\\
\caption{The $\log_{10}$ values of the generalized eigenvalues for
  different types of subdomain with constant coefficients. On the
  left/right are the exact/inexact versions with
  $A_{\Gamma \Gamma}^{(i)}$/$\hat{A}_{\Gamma \Gamma}^{(i)}$. On the
  top/bottom are the cases where $\frac{H}{h}=8$/$\frac{H}{h}=16$.}
\label{NOSAS_figure1}
\end{figure}

The next set of experiments is to confirm \Cref{isolated_thm}. \cref{NOSAS_table1} shows  the
smallest four generalized eigenvalues of  \cref{exact_eigen} for
two different types of subdomains: comb-like structure in \cref{NOSAS_figure2}
and string-like structure in \cref{NOSAS_figure3}. As expected by \cref{isolated_thm}, for the subdomain $\#1$ in the comb-like structure has two
high-permeable islands touching $\Gamma_i$ and not touching the Dirichlet boundary $\partial \Omega$, therefore, it results in two small eigenvalues,  and
for the subdomain $\#2$ just one small eigenvalue
since one of the high-permeable island touching
$\partial \Omega$. For the subdomain $\#1$ in the string-like structure, there is only one small eigenvalue, while for the subdomain $\#2$, there are three small
eigenvalues. See that the small eigenvalues are zero or are proportional to $\rho_2/\rho_1$. 

\begin{figure}[tbhp]
\centering
\includegraphics[width=0.5\textwidth]{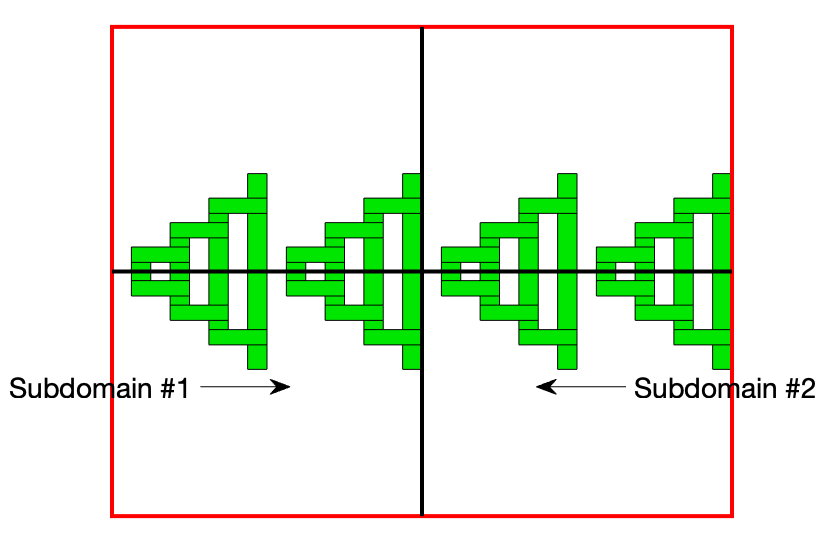}
\caption{Coefficients $\rho(x)=10^{6}$ in green areas and $\rho(x)=1$ in white areas with $H=1/2$.}
\label{NOSAS_figure2}
\end{figure}

\begin{figure}[tbhp]
\centering
\subfloat[Subdomain $\# 1$]{\label{fig:7.1}\includegraphics[width=.3\linewidth]{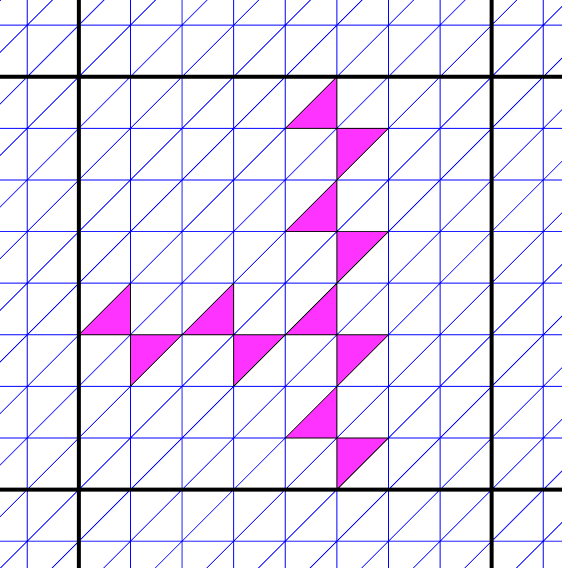}} \hspace{10pt}\subfloat[Subdomain $\# 2$]{\label{fig:7.2}\includegraphics[width=.3\linewidth]{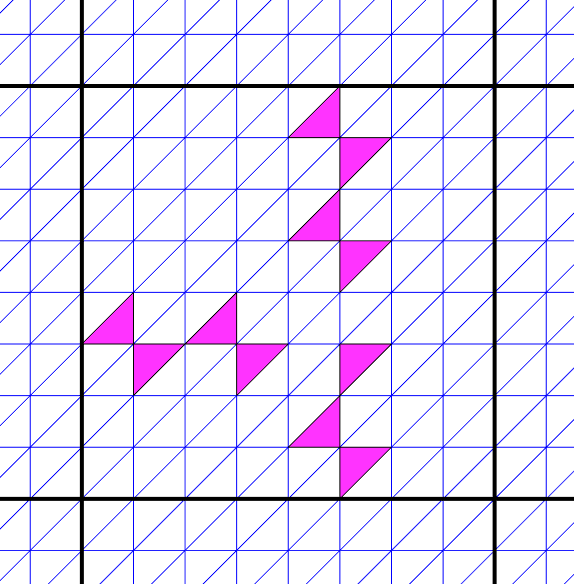}}
\caption{Coefficients $\rho(x)=10^{12}$ in magenta elements and $\rho(x)=1$ in white elements.}
\label{NOSAS_figure3}
\end{figure}

\begin{table}[tbhp]  
    \centering
\begin{tabular}{ |c|c|c|c|c|  }

 \hline
  & $\lambda_1^{(i)}$ &$\lambda_2^{(i)}$ & $\lambda_3^{(i)}$&$\lambda_4^{(i)}$\\
 \hline
\small Subdomain $\#1$ in \cref{NOSAS_figure2}&\small$2.14\times10^{-7}$&\small$1.68\times10^{-6}$&0.0907&$0.1572$\\
\hline
\small Subdomain $\#2$ in \cref{NOSAS_figure2}&\small$8.71\times10^{-7}$&$  0.0594$&0.1572&0.2296\\
\hline
\small Subdomain $\#1$ in \cref{NOSAS_figure3} & 0 &0.2000 & 0.2727 &0.3072\\
\hline
\small Subdomain $\#2$ in \cref{NOSAS_figure3} & 0 &\small$1.24\times10^{-11}$ & \small$1.51\times10^{-11}$ &0.3072\\
\hline
\end{tabular}
\caption{List of the generalized eigenvalues for the exact solver with $H/h=16$ in \cref{NOSAS_figure2} and $H/h=8$ in \cref{NOSAS_figure3}.}
\label{NOSAS_table1}
\end{table}

We now consider the example given by \Cref{NOSAS_figure4}. The width of the
green channel with coefficients $\rho=10^{6}$ across $\Omega$ is $h$,
and $\rho=1$ in the remaining areas. The left position of the channel is $\frac{1}{4}H$ away from $\Gamma_i$.
\cref{NOSAS_table2} shows the smallest three generalized eigenvalues of
a floating domain that contains the channel. The second smallest eigenvalue
behaves like $\asymp \frac{h}{H(1+\log \frac{H}{h})}$ while the third smallest eigenvalue
behaves like $\asymp \frac{h}{H}$. This agrees with the bounds \cref{boundci}
in \cref{isolated_thm} and \cref{cHh}. \cref{NOSAS_table3} shows that MES
can handle the situation that in each subdomain  there is at most one
high-permeable island that touches $\Gamma_i$ and with condition number $O(H/h)$. MES method only
deteriorates when there are more than one high-permeable islands that touch $\Gamma_i$ such as in \Cref{NOSAS_figure5}.  In that case, MES method can not get a small condition number. Instead, we use NOSAS methods.

\begin{figure}[tbhp] 
 \centering
\includegraphics[width=0.4\textwidth]{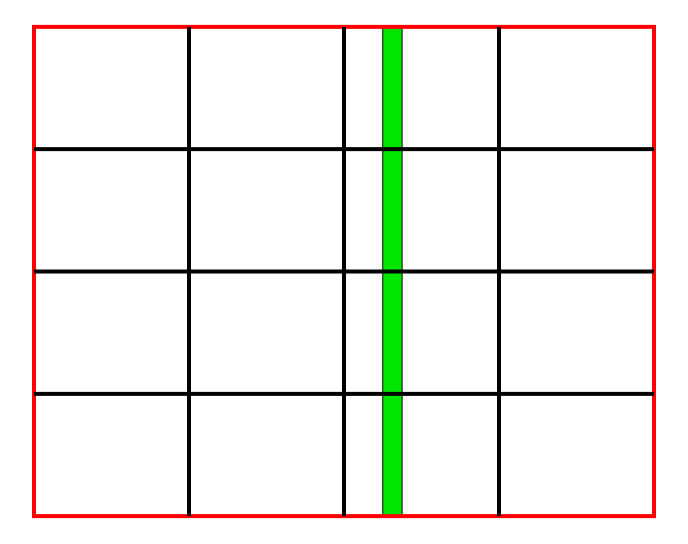} 
\caption{Coefficients $\rho=10^{6}$ in green channel and  $\rho=1$ in other areas with $H=1/4$.} \label{NOSAS_figure4} 
\end{figure}

\begin{table}[tbhp]  
    \centering
\begin{tabular}{ |c|c|c|c|c|c|  }
 \hline
  & $\lambda^{(i)}_1$ &$\lambda^{(i)}_2$ & $\lambda^{(i)}_3$&$\cdots$&$\lambda^{(i)}_{n_i}$\\
 \hline
 \multirow{2}{3.2em}{$\frac{H}{h}=8$} & 0 & 0.1548 & 0.2500&$\cdots$&1\\
& (0) & (0.0719)  &(0.1250)&$\cdots$&(1.4724)\\
 \hline
  \multirow{2}{3.2em}{\small$\frac{H}{h}=16$} & 0 & 0.0630 & 0.1250&$\cdots$&1\\
& (0) & (0.0302) &(0.0595) &$\cdots$&(1.4707)\\
\hline
 \multirow{2}{3.2em}{\small $\frac{H}{h}=32$} & 0 & 0.0284 & 0.0583&$\cdots$&1\\
& (0) & (0.0139) &(0.0282) &$\cdots$&(1.4706)\\
\hline
\end{tabular}
\\
\hspace*{5pt}\\
\caption{The eigenvalues of the floating subdomain with a green channel in the mesh of \cref{NOSAS_figure4} for exact solver and inexact solver in
parenthesis.}
\label{NOSAS_table2}
\end{table}

\begin{table}[tbhp]  
    \centering
\begin{tabular}{ |c|c|c|c|c|  }

 \hline
 MES & $H=\frac{1}{2}$ &$H=\frac{1}{4}$ & $H=\frac{1}{8}$ &$H=\frac{1}{16}$\\
 \hline
 \multirow{2}{3.2em}{$\frac{H}{h}=4$} & 13 & 23 & 28& 28\\
& (7.00) & (9.39) &(10.88) &(11.69)\\
 \hline
  \multirow{2}{3.2em}{$\frac{H}{h}=8$} & 21 & 31 & 34& 36\\
& (13.92) & (16.31) &(16.93) &(17.09)\\
\hline
 \multirow{2}{3.2em}{\small $\frac{H}{h}=16$} & 31 & 56 & 60 & 60\\
& (29.96) & (51.38
) &(60.25) &(64.74)\\
\hline
\end{tabular}
\\
\hspace*{5pt}\\
\caption{MES in the mesh of \cref{NOSAS_figure4}, the number of iterations of the PCG and the condition number in
parenthesis.}
\label{NOSAS_table3}
\end{table}

\subsection{The spectral cases}\hspace*{\fill}

In \Cref{NOSAS_figure5}, each subdomain contains two horizontal 
and two vertical white channels with low permeability $\rho(x)=1$ and
the remaining of the domain are green inclusions with $\rho(x)=10^{6}$.
For the corner subdomains, we have three high-permeable islands that touch $\Gamma_i$ and not $\partial \Omega$, corresponding to three small eigenvalues of $O(10^{-6})$. For the edge subdomains, we have five high-permeable islands that touch $\Gamma_i$ and not $\partial \Omega$, corresponding to five small eigenvalues with $O(10^{-6})$. For the floating subdomains, we have eight high-permeable islands that touch $\Gamma_i$, corresponding to eight small eigenvalues with $O(10^{-6})$. \Cref{NOSAS_table4} shows numerical results for the mesh of \cref{NOSAS_figure5}. By choosing $\eta=0.25\frac{h}{H}$ for all the NOSAS methods, the coarse problem will include all the eigenvectors associated to 
small eigenvalues. Therefore, we can expect the condition number is $O(\frac{H}{h}(1 +\log\frac{H}{h}))$.  We can see in \Cref{NOSAS_table4} that the use of inexact solvers does not deteriorate much the performance of the methods. We also note that if we put coefficients $\rho(x)=10^6$ in the white channels and $\rho(x)=1$ in the green inclusions, there would be only one high-permeable island in each subdomain. In this situation, the MES would also be satisfying. 

\begin{figure}[tbhp] 
  \centering
\includegraphics[width=0.4\textwidth]{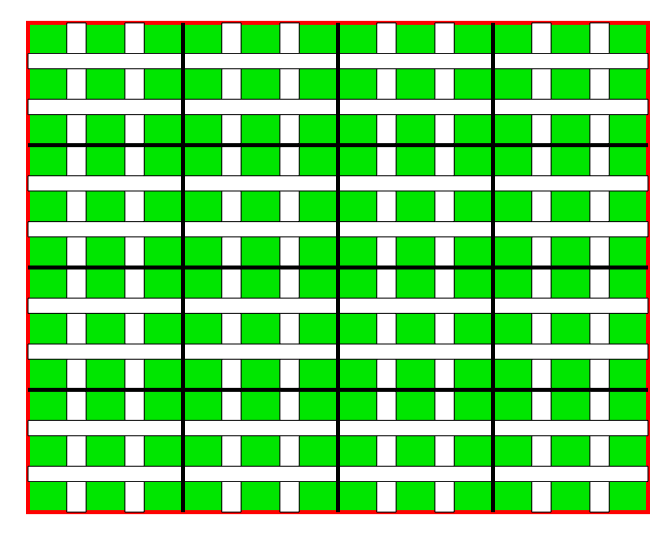}

\caption{Coefficients $\rho(x)=10^6$ in green inclusions and $\rho(x)=1$ in white channels with $H=1/4$. We fix the structure in each subdomain, which means for a different H, we still have four channels in each subdomain and nine inclusions.}
\label{NOSAS_figure5}
\end{figure}

\begin{table}[tbhp]
    \centering

    \scalebox{0.9}{
\begin{tabular}{|l|l|l|l|l|}
\hline
\begin{tabular}[c]{@{}l@{}}  \hspace{20pt}NOSAS\\             with exact solver\end{tabular} & $H=\frac{1}{2}$                                      & $H=\frac{1}{4}$                                      & $H=\frac{1}{8}$                                      & $H=\frac{1}{16}$                                     \\ \hline
\hspace{20pt}\footnotesize$\frac{H}{h}=8$                                                                                            & \begin{tabular}[c]{@{}l@{}}\hspace{6pt}9\\ (4.76)\end{tabular}   & \begin{tabular}[c]{@{}l@{}}\hspace{5pt}10\\ (4.76)\end{tabular}  & \begin{tabular}[c]{@{}l@{}}\hspace{5pt}11\\ (4.76)\end{tabular}  & \begin{tabular}[c]{@{}l@{}}\hspace{5pt}11\\ (4.76)\end{tabular}  \\ \hline
\hspace{20pt}\footnotesize$\frac{H}{h}=16$                                                                                           & \begin{tabular}[c]{@{}l@{}}\hspace{5pt}13\\ (9.74)\end{tabular}  & \begin{tabular}[c]{@{}l@{}}\hspace{5pt}16\\ (9.74)\end{tabular}  & \begin{tabular}[c]{@{}l@{}}\hspace{5pt}16\\ (9.74)\end{tabular}  & \begin{tabular}[c]{@{}l@{}}\hspace{5pt}16\\ (9.74)\end{tabular}  \\ \hline
\hspace{20pt}\footnotesize$\frac{H}{h}=32$                                                                                           & \begin{tabular}[c]{@{}l@{}}\hspace{6pt}19\\ (20.53)\end{tabular} & \begin{tabular}[c]{@{}l@{}}\hspace{6pt}25\\ (20.53)\end{tabular} & \begin{tabular}[c]{@{}l@{}}\hspace{6pt}25\\ (20.53)\end{tabular} & \begin{tabular}[c]{@{}l@{}}\hspace{6pt}25\\ (20.53)\end{tabular} \\ \hline
\end{tabular}}
\vspace*{3pt}\\
\scalebox{0.85}{
\begin{tabular}{|l|l|l|l|l|}
\hline
\begin{tabular}[c]{@{}l@{}}\hspace{40pt}NOSAS\\ \hspace{-3pt}with block diagonal inexact solver\end{tabular} & $H=\frac{1}{2}$                                      & $H=\frac{1}{4}$                                      & $H=\frac{1}{8}$                                      & $H=\frac{1}{16}$                                     \\ \hline
\hspace{40pt}\footnotesize$\frac{H}{h}=8$                                                                                         & \begin{tabular}[c]{@{}l@{}}\hspace{5pt}10\\ (4.76)\end{tabular}   & \begin{tabular}[c]{@{}l@{}}\hspace{5pt}12\\ (4.76)\end{tabular}  & \begin{tabular}[c]{@{}l@{}}\hspace{5pt}12\\ (4.76)\end{tabular}  & \begin{tabular}[c]{@{}l@{}}\hspace{5pt}12\\ (4.76)\end{tabular}  \\ \hline
\hspace{40pt}\footnotesize$\frac{H}{h}=16$                                                                                        & \begin{tabular}[c]{@{}l@{}}\hspace{5pt}14\\ (9.74)\end{tabular} & \begin{tabular}[c]{@{}l@{}}\hspace{5pt}17\\ (9.74)\end{tabular} & \begin{tabular}[c]{@{}l@{}}\hspace{5pt}17\\ (9.74)\end{tabular} & \begin{tabular}[c]{@{}l@{}}\hspace{5pt}17\\ (9.74)\end{tabular} \\ \hline
\hspace{40pt}\footnotesize$\frac{H}{h}=32$                                                                                        & \begin{tabular}[c]{@{}l@{}}\hspace{6pt}21\\ (20.53)\end{tabular} & \begin{tabular}[c]{@{}l@{}}\hspace{6pt}26\\ (20.53)\end{tabular} & \begin{tabular}[c]{@{}l@{}}\hspace{6pt}26\\ (20.53)\end{tabular} & \begin{tabular}[c]{@{}l@{}}\hspace{6pt}25\\ (20.53)\end{tabular} \\ \hline
\end{tabular}}
\vspace*{3pt}\\
\scalebox{0.9}{
\begin{tabular}{|l|l|l|l|l|}
\hline
\begin{tabular}[c]{@{}l@{}}\hspace{40pt}NOSAS\\ \hspace{-3pt}with diagonal inexact solver\end{tabular} & $H=\frac{1}{2}$                                      & $H=\frac{1}{4}$                                      & $H=\frac{1}{8}$                                      & $H=\frac{1}{16}$                                     \\ \hline
\hspace{40pt}\footnotesize$\frac{H}{h}=8$                                                                                         & \begin{tabular}[c]{@{}l@{}}\hspace{6pt}9\\ (6.47)\end{tabular}   & \begin{tabular}[c]{@{}l@{}}\hspace{5pt}11\\ (6.47)\end{tabular}  & \begin{tabular}[c]{@{}l@{}}\hspace{5pt}12\\ (6.47)\end{tabular}  & \begin{tabular}[c]{@{}l@{}}\hspace{5pt}12\\ (6.47)\end{tabular}  \\ \hline
\hspace{40pt}\footnotesize$\frac{H}{h}=16$                                                                                        & \begin{tabular}[c]{@{}l@{}}\hspace{5pt}15\\ (13.46)\end{tabular} & \begin{tabular}[c]{@{}l@{}}\hspace{5pt}18\\ (13.46)\end{tabular} & \begin{tabular}[c]{@{}l@{}}\hspace{5pt}18\\ (13.46)\end{tabular} & \begin{tabular}[c]{@{}l@{}}\hspace{5pt}19\\ (13.46)\end{tabular} \\ \hline
\hspace{40pt}\footnotesize$\frac{H}{h}=32$                                                                                        & \begin{tabular}[c]{@{}l@{}}\hspace{6pt}22\\ (28.06)\end{tabular} & \begin{tabular}[c]{@{}l@{}}\hspace{6pt}27\\ (28.06)\end{tabular} & \begin{tabular}[c]{@{}l@{}}\hspace{6pt}27\\ (28.06)\end{tabular} & \begin{tabular}[c]{@{}l@{}}\hspace{6pt}28\\ (28.06)\end{tabular} \\ \hline
\end{tabular}}
\\
\hspace*{3pt}\\
\caption{Comparison of NOSAS with exact solvers and inexact solvers in the mesh of \cref{NOSAS_figure5}. The number of iterations of the PCG and the condition number in parenthesis.}
\label{NOSAS_table4}
\end{table}

Next, we consider the size of global component of the coarse problem for the NOSAS method with the diagonal solvers. We divide the square domain into $1/H^2$ congruent square subdomain. We
select two floating subdomains $\Omega_i$, $\Omega_j$ that touch the node at $(1/2,1/2)$ like
in \cref{NOSAS_figure6}. One subdomain has two horizontal channels and two vertical channels without touching the subdomain boundary. The other subdomain has two horizontal channels and two vertical channels that touch the subdomain boundary. The coefficients $\rho(x)=10^{6}$ in green areas and $\rho(x)=1$ in each channel. According to \cref{isolated_thm}, there are no small eigenvalues in the corner and edges subdomain, and only the zero eigenvalue is small for 
the floating subdomains without any channels. For the floating
subdomain $\Omega_i$ where the channels do not touch $\Gamma_i$, there is
only one small eigenvalue, and for the floating subdomain $\Omega_j$ with channels that touch $\Gamma_j$, there are
eight small eigenvalues of size $O(10^{-6})$. Note that the size of the global component is the total number of
eigenfunction, which equal to the number of floating subdomain $+7$. 
\cref{NOSAS_table5} shows the correct total number of eigenvalues for NOSAS with the diagonal solver and threshold $\eta=0.25 \frac{h}{H}$.

\begin{figure}[tbhp]
\centering
\includegraphics[width=0.4\textwidth]{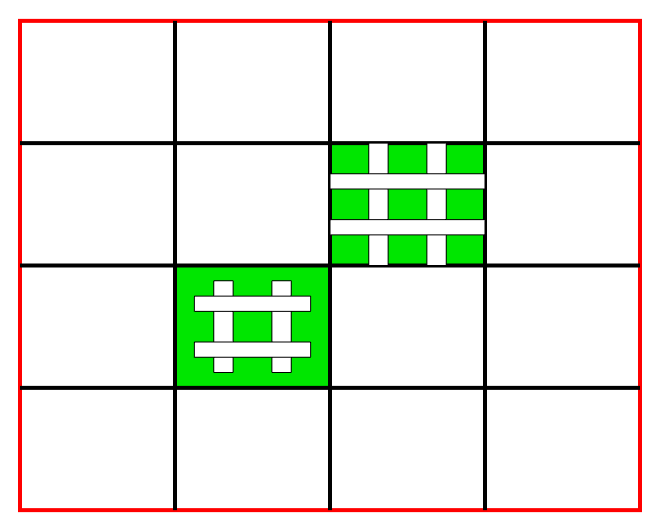}
\caption{Coefficients $\rho(x)=10^{6}$ in green areas and $\rho(x)=1$ in channels and white areas with $H=1/4$.}
\label{NOSAS_figure6}
\end{figure}
\begin{table}[tbhp]
    \centering
\scalebox{0.9}{
\begin{tabular}{ |c|c|c|c| }
 \hline
  NOSAS & iterations & condition number & number of small eigenvalues \\
  with the diagonal solver & & & \\
   \hline
  ${\scriptstyle H=\frac{1}{4},  \frac{H}{h}=8 }$&52 & 71.93  & 11\\
   \hline
  ${\scriptstyle H=\frac{1}{8},  \frac{H}{h}=8}$& 72 & 68.25  & 43\\
   \hline
 ${\scriptstyle H=\frac{1}{16},  \frac{H}{h}=8}$& 72 & 66.71  & 203\\
\hline
\end{tabular}}
\\
\hspace*{5pt}\\
 \caption{NOSAS methods and the size of the global problem for the mesh of  \cref{NOSAS_figure6}.}
 \label{NOSAS_table5}
\end{table}

We now consider NOSAS methods for \cref{NOSAS_figure7} by successively adding
very high-permeable channels to \cref{NOSAS_figure5}. \cref{NOSAS_table6} shows the good performance of NOSAS and the small dimension of the global problem with different choices of threshold $\eta$. Finally, we show the generality of NOSAS methods for the SPE10 meshes in \cref{NOSAS_figure8}, and the corresponding
good numerical results in \cref{NOSAS_table7}.

{\bf Acknowledgements.} The authors would like to thank the reviewers for their thoughtful comments and efforts towards improving our manuscript.

\begin{figure}[tbhp]
\centering \subfloat[One channel]{\label{fig:6.1}\includegraphics[width=.4\linewidth]{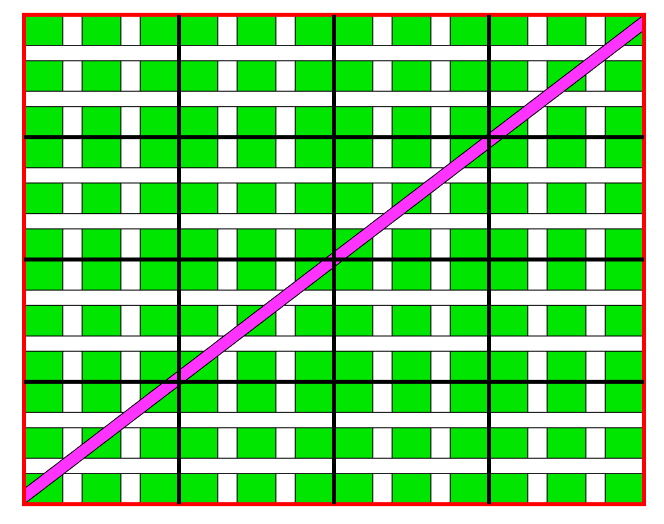}} \subfloat[Two channels]{\label{fig:6.2}\includegraphics[width=.4\linewidth]{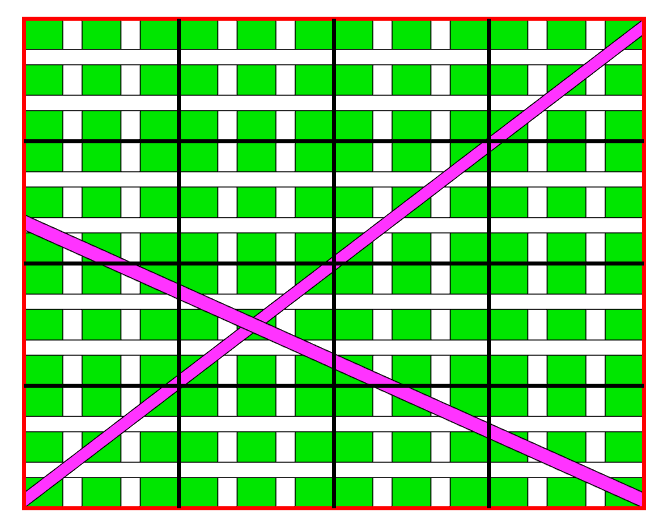}}\\

\subfloat[Three channels]{\label{fig:6.3}\includegraphics[width=.4\linewidth]{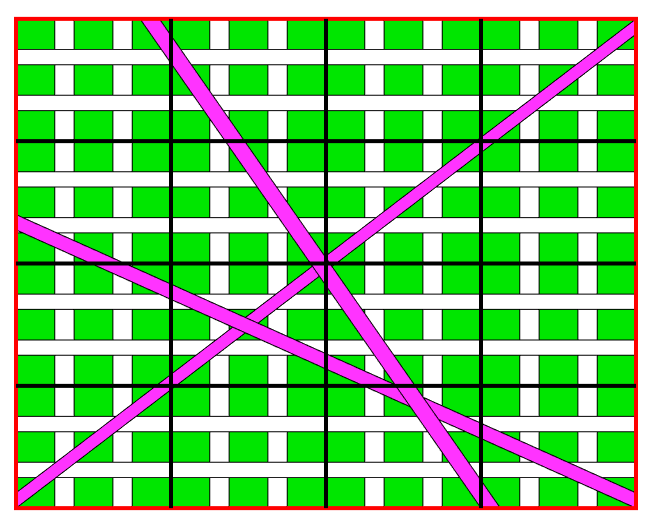}} \subfloat[Four channels]{\label{fig:6.4}\includegraphics[width=.4\linewidth]{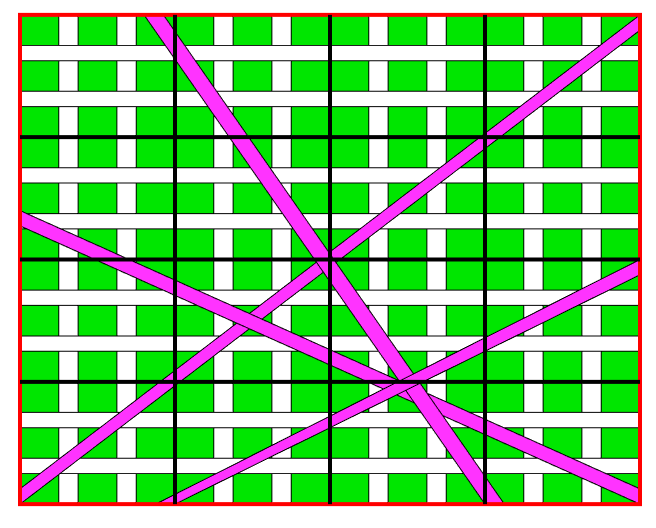}}

\caption{Adding channels to the mesh of \cref{NOSAS_figure5} with $H=1/4$, coefficients $\rho(x)=1$ in white channels, $\rho(x)=10^6$ in green inclusions, and $\rho(x)=10^{12}$ in magenta channels.}
\label{NOSAS_figure7}
\end{figure}

\begin{table}[tbhp]
    \centering
   
\begin{tabular}{ |c|c|c|c| }
 \hline
  $\eta=0.25\frac{h}{H}$ & iterations & condition number & number of small eigenvalues \\
   \hline
   No channel
  &18 & 13.46  & 84\\
   \hline
1 channel & 36 & 177.58 & 80\\
   \hline
2 channels & 87 & 155.53 & 70\\
\hline
3 channels & 95 & 147.28 & 64\\
\hline
4 channels & 102 & 148.97 & 58\\
\hline
\end{tabular}
\vspace*{3pt}\\
\begin{tabular}{ |c|c|c|c| }
 \hline
   $\eta=0.64\frac{h}{H}$ & iterations & condition number & number of small eigenvalues \\
   \hline
   No channel
  &18 & 13.46  & 84\\
   \hline
1 channel & 24 & 14.42 & 84\\
   \hline
2 channels & 57 & 53.28  & 77\\
\hline
3 channels & 63 & 59.48  & 73\\
\hline
4 channels & 67 & 59.47  & 69\\
\hline
\end{tabular}
\vspace*{3pt}\\
\begin{tabular}{ |c|c|c|c| }
 \hline
  $\eta=1.60\frac{h}{H}$  & iterations & condition number & number of small eigenvalues \\
   \hline
   No channel
  &18 & 13.46  & 84\\
   \hline
1 channel & 24 & 14.42  & 84\\
   \hline
2 channels & 41 & 25.32  & 82\\
\hline
3 channels & 39 & 25.08  & 84\\
\hline
4 channels & 41 & 25.08 & 84\\
\hline
\end{tabular}
 \caption{Choosing different threshold $\eta$ for NOSAS with inexact diagonal solver for the mesh of  \cref{NOSAS_figure7} with $H=1/4$ and $h=1/64$. }
 \label{NOSAS_table6}
\end{table}

\begin{figure}[tbhp]
\centering 
\subfloat[$Kxx\_06$]{\includegraphics[width=0.75\linewidth]{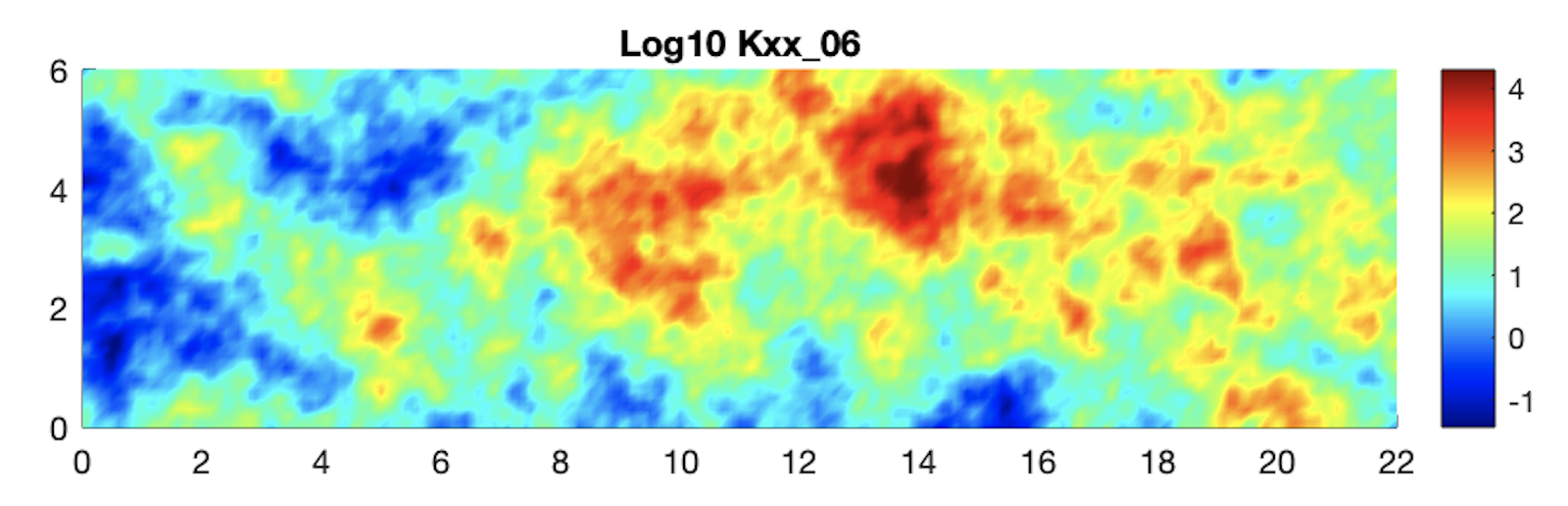}} \\
\subfloat[$Kxx\_85$]{\includegraphics[width=0.75\linewidth]{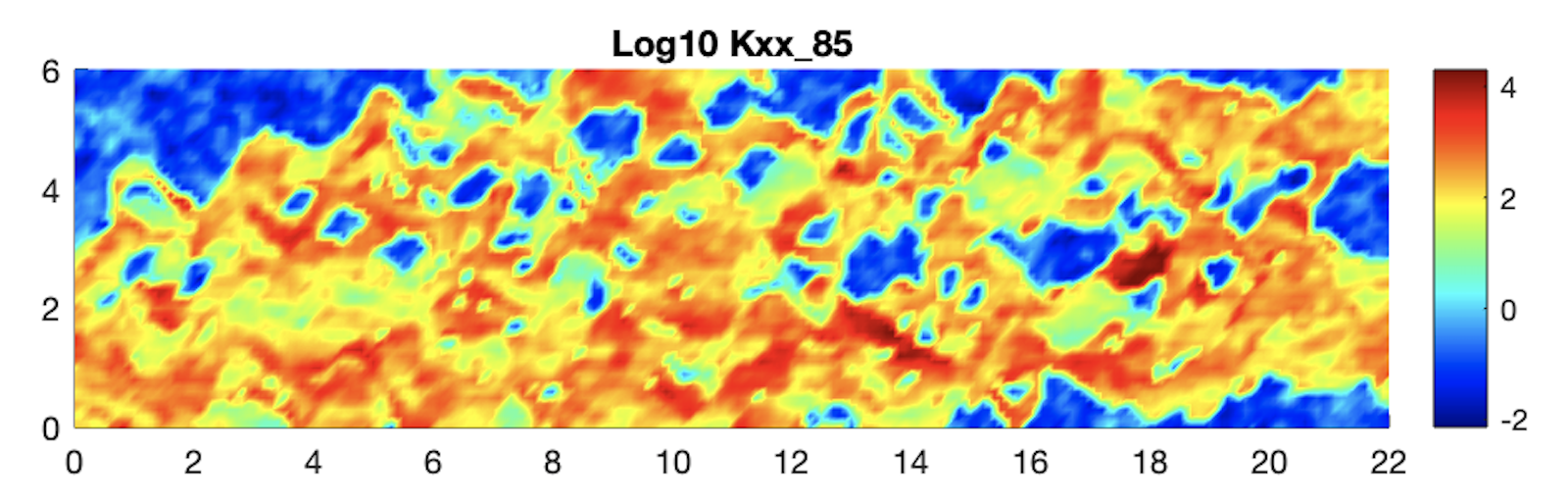}} 
\caption{The $\log_{10}$ values of coefficients for two types of SPE10 mesh.}
\label{NOSAS_figure8}
\end{figure}

\begin{table}[tbhp]
    \centering
   
\begin{tabular}{ |c|c|c|c| }
 \hline
$Kxx\_06$ & iterations & condition number & number of small eigenvalues \\
   \hline
  $\eta=0.25\frac{h}{H}$
  &94 & 123.18  & 26\\
   \hline
  $\eta=0.64\frac{h}{H}$ & 75 & 80.10 & 35\\
   \hline
  $\eta=1.60\frac{h}{H}$ & 53 & 33.72 & 66\\
\hline
\end{tabular}
\vspace*{3pt}\\
\begin{tabular}{ |c|c|c|c| }
 \hline
$Kxx\_85$ & iterations & condition number & number of small eigenvalues \\
   \hline
  $\eta=0.25\frac{h}{H}$
  &115 &  187.32  & 40\\
   \hline
  $\eta=0.64\frac{h}{H}$ & 77 & 84.88 & 64\\
   \hline
  $\eta=1.60\frac{h}{H}$ & 51 & 37.31 & 110\\
\hline
\end{tabular}
 \caption{Choosing different threshold $\eta$ for NOSAS with inexact diagonal solver in the mesh of SPE10 with 33 subdomains. }
 \label{NOSAS_table7}
\end{table}

\bibliographystyle{siamplain}
\bibliography{references}

\end{document}